% Last version: 04/2023 - Version accepted on G&T
\documentclass[a4paper,11pt]{amsart}

\usepackage{amsfonts}
\usepackage{amsmath}
\usepackage{mathrsfs}
\usepackage{amssymb}
\usepackage{amsthm}
\usepackage{amscd}
\usepackage{latexsym}
\usepackage{amstext}
\usepackage{amsxtra}
\usepackage[utf8]{inputenc}
\usepackage[english]{babel}
\usepackage[alphabetic,backrefs,lite]{amsrefs} % for bibliography

\usepackage{geometry}

\linespread{1,2}
\geometry{top=3cm,bottom=3cm,left=2cm,right=2cm}
%\linespread{1.2}
%\setlength{\parindent}{0em} % Per l'indentazione
%\setlength{\parskip}{1em}
%\setlength{\textheight}{240mm}
%\setlength{\textwidth}{140mm}
%\setlength{\oddsidemargin}{10mm}
%\setlength{\topmargin}{-2cm}

\usepackage{color}
\usepackage{paralist}

\usepackage[colorinlistoftodos]{todonotes}
\usepackage[all]{xy}
\usepackage{tikz}

\usepackage{enumerate}
\usepackage{multirow}
\usepackage{wasysym}
\usepackage{latexsym} % for \Box
\usepackage{comment}
\usepackage{colonequals}

\usepackage[
        draft=false,
        colorlinks, citecolor=darkgreen,
        backref,       
        pdfauthor={F. F. Favale, G. P. Pirola and S. Torelli},
        pdftitle={Holomorphic 1-forms on the moduli space of curves},
        linktocpage        
]{hyperref}
\hypersetup{citecolor=blue,linktocpage}

%%%%%%%%%%%%%%%%%%%%%%%%%%%%%%%%%%%%%%%%%%%%%%%%%%%%%%%%%%%%%%%%%%%%%%%%%%%%%
%%%%%%%%%%%%%%%%%%%%%%%%%%%%%%%%%%%%%%%%%%%%%%%%%%%%%%%%%%%%%%%%%%%%%%%%%%%%%
%%%%%%%%%%%%%%%%%%%%%%%%%%%%%%%%%%%%%%%%%%%%%%%%%%%%%%%%%%%%%%%%%%%%%%%%%%%%%

\newtheorem{theorem}{Theorem}[section]
\newtheorem*{theorem*}{Theorem}
\newtheorem{lemma}[theorem]{Lemma}
\newtheorem{corollary}[theorem]{Corollary}
\newtheorem*{corollary*}{Corollary}
\newtheorem{proposition}[theorem]{Proposition}
\newtheorem{remark}[theorem]{Remark}
\newtheorem{definition}[theorem]{Definition}

\newtheorem{question}[theorem]{Question}

%%%%%%%%%%%%%%%%%%%%%%%%%%%%%%%%%%%%%%%%%%%%%%%%%%%%%%%%%%%%%%%%%%%%%%%%%%%%%
%%%%%%%%%%%%%%%%%%%%%%%%%%%%%%%%%%%%%%%%%%%%%%%%%%%%%%%%%%%%%%%%%%%%%%%%%%%%%
%%%%%%%%%%%%%%%%%%%%%%%%%%%%%%%%%%%%%%%%%%%%%%%%%%%%%%%%%%%%%%%%%%%%%%%%%%%%%

\usepackage{xpatch}
\makeatletter
\AtBeginDocument{\xpatchcmd{\@thm}{\thm@headpunct{.}}{\thm@headpunct{}}{}{}}
\makeatother

\newcommand{\nc}{\newcommand} 
\nc{\cH}{{\mathcal H}}
\nc{\cA}{{\mathcal A}}
\nc{\cG}{{\mathcal G}}
\nc{\cC}{{\mathcal C}}
\nc{\cD}{{\mathcal D}}
\nc{\cO}{{\mathcal O}}
\nc{\cI}{{\mathcal I}}
\nc{\cB}{{\mathcal B}}
\nc{\cY}{{\mathcal Y}}
\nc{\cK}{{\mathcal K}} 
\nc{\cX}{{\mathcal X}}
\nc{\cS}{{\mathcal S}}
\nc{\cE}{{\mathcal E}}
\nc{\cF}{{\mathcal F}}
\nc{\cZ}{{\mathcal Z}}
\nc{\cQ}{{\mathcal Q}}
\nc{\cN}{{\mathcal N}}
\nc{\cP}{{\mathcal P}}
\nc{\cL}{{\mathcal L}}
\nc{\cM}{{\mathcal M}}
\nc{\cT}{{\mathcal T}}
\nc{\cW}{{\mathcal W}}
\nc{\cU}{{\mathcal U}}
\nc{\cJ}{{\mathcal J}}
\nc{\cV}{{\mathcal V}}
\nc{\bH}{{\mathbb H}}
\nc{\bA}{{\mathbb A}}
\nc{\bG}{{\mathbb G}}
\nc{\bC}{{\mathbb C}}
\nc{\bO}{{\mathbb O}}
\nc{\bI}{{\mathbb I}}
\nc{\bB}{{\mathbb B}}
\nc{\bY}{{\mathbb Y}}
\nc{\bK}{{\mathbb K}} 
\nc{\bX}{{\mathbb X}}
\nc{\bS}{{\mathbb S}}
\nc{\bE}{{\mathbb E}}
\nc{\bF}{{\mathbb F}}
\nc{\bZ}{{\mathbb Z}}
\nc{\bQ}{{\mathbb Q}}
\nc{\bN}{{\mathbb N}}
\nc{\bP}{{\mathbb P}}
\nc{\bL}{{\mathbb L}}
\nc{\bM}{{\mathbb M}}
\nc{\bT}{{\mathbb T}}
\nc{\bW}{{\mathbb W}}
\nc{\bU}{{\mathbb U}}
\nc{\bD}{{\mathbb D}}
\nc{\bJ}{{\mathbb J}}
\nc{\bV}{{\mathbb V}}
\nc{\bbZ}{{\mathbb Z}}
\nc{\bR}{{\mathbb R}}
\nc{\fr}{{\rightarrow}}

\newcommand{\la}{\longrightarrow}
\nc{\cu}{{\barline{\nabla}}}

\nc{\OO}{\mathcal{O}}
\nc{\PP}{\mathbb{P}}
\DeclareMathOperator{\id}{id}

\DeclareMathOperator{\Hom}{Hom}

\DeclareMathOperator{\Pic}{Pic}
\DeclareMathOperator{\NS}{NS}

\DeclareMathOperator{\Sing}{Sing}

\DeclareMathOperator{\Aut}{Aut}

\DeclareMathOperator{\MDM}{\overline{\cM}_g^{\mbox{{\fontsize{5}{12}\selectfont DM}}}}
\DeclareMathOperator{\MSat}{\overline{\cM}_g^{\mbox{{\fontsize{5}{12}\selectfont Sat}}}}
\DeclareMathOperator{\ASat}{\overline{\cA}_g^{\mbox{{\fontsize{5}{12}\selectfont Sat}}}}
\newcommand{\HSat}{H^{\mbox{{\fontsize{5}{12}\selectfont Sat}}}}
\DeclareMathOperator{\Mgo}{\cM_g^{o}}
\newcommand{\Sat}{\overline{\tau}}
\DeclareMathOperator{\Stab}{Stab}

\newcommand{\Ug}[1]{\mathcal{U}_{g,#1}}

\newcommand{\Gr}{\mathbb{G}}

\newcommand{\V}{\mathcal{V}}

\nc{\fA}{{\mathfrak{A}}}
\nc{\fB}{{\mathfrak{B}}}
\nc{\fC}{{\mathfrak{C}}}
\nc{\fD}{{\mathfrak{D}}}
\nc{\fE}{{\mathfrak{E}}}
\nc{\fF}{{\mathfrak{F}}}

%%%%%%%%%%%%%%%%%%%%%%%%%%%%%%%%%%%%%%%%%%%%%%%%%%%%%%%%%%%%%%%%%%%%%%%%%%%%%
%%%%%%%%%%%%%%%%%%%%%%%%%%%%%%%%%%%%%%%%%%%%%%%%%%%%%%%%%%%%%%%%%%%%%%%%%%%%%
%%%%%%%%%%%%%%%%%%%%%%%%%%%%%%%%%%%%%%%%%%%%%%%%%%%%%%%%%%%%%%%%%%%%%%%%%%%%%
%%%%%%%%%%%%%%%%%%%%%%%%%%%%%%%%%%%%%%%%%%%%%%%%%%%%%%%%%%%%%%%%%%%%%%%%%%%%%
%%%%%%%%%%%%%%%%%%%%%%%%%%%%%%%%%%%%%%%%%%%%%%%%%%%%%%%%%%%%%%%%%%%%%%%%%%%%%

\begin{document}

%%%%%%%%%%%%%%%%%%%%%%%%%%%%%%%%%%%%%%%%%%%%%%%%%%%%%%%%%%%%%%%%%%%%%%%%%%%%%
%%%%%%%%%%%%%%%%%%%%%%%%%%%%%%%%%%%%%%%%%%%%%%%%%%%%%%%%%%%%%%%%%%%%%%%%%%%%%
\title[Holomorphic $1$-forms on the moduli space of curves]{Holomorphic $1$-forms on the moduli space of curves}

\date{April 15, 2023}

\author{Filippo Francesco Favale}
\address{Dipartimento di Matematica,
	Universit\`a degli Studi di Pavia,
	Via Ferrata, 5
	I-27100 Pavia, Italy}
\email{filippo.favale@unipv.it}

\author{Gian Pietro Pirola}
\address{Dipartimento di Matematica,
	Universit\`a degli Studi di Pavia,
	Via Ferrata, 5
	I-27100 Pavia, Italy}
\email{gianpietro.pirola@unipv.it}

\author{Sara Torelli}
\address{Institut für Algebraische Geometrie,        
	Leibniz Universität Hannover, 
    Welfengarten 1,     
    30167 Hannover, Germany}
\email{torelli@math.uni-hannover.de}

\thanks{
\textit{2010 Mathematics Subject Classification}: Primary:  14H15; Secondary: 32L10\\
\textit{Keywords}: Moduli Space, $1$-forms, Extension of sections, Positivity }

%%%%%%%%%%%%%%%%%%%%%%%%%%%%%%%%%%%%%%%%%%%%%%%%%%%%%%%%%%%%%%%%%%%%%%%%%%%%%
%%%%%%%%%%%%%%%%%%%%%%%%%%%%%%%%%%%%%%%%%%%%%%%%%%%%%%%%%%%%%%%%%%%%%%%%%%%%%

\maketitle

\begin{abstract}
Since the sixties it is well known that there are no non-trivial closed holomorphic $1$-forms on the moduli space $\cM_g$ of smooth projective curves of genus $g>2$.
In this paper we strengthen such result proving that for $g\geq 5$ there are no non-trivial holomorphic $1$-forms. With this aim, we prove an extension result for sections of locally free sheaves $\cF$ on a projective variety $X$. More precisely, we give a characterization for the surjectivity of the restriction map
$\rho_D:H^0(\cF)\to H^0(\cF|_{D})$
for divisors $D$ in the linear system of a sufficiently large multiple of a big and semiample line bundle $\cL$. Then, we apply this to the line bundle $\cL$ given by the Hodge class on the Deligne Mumford compactification of $\cM_g$.
\end{abstract}

%%%%%%%%%%%%%%%%%%%%%%%%%%%%%%%%%%%%%%%%%%%%%%%%%%%%%%%%%%%%%%%%%%%%%%%%%%%%%
%%%%%%%%%%%%%%%%%%%%%%%%%%%%%%%%%%%%%%%%%%%%%%%%%%%%%%%%%%%%%%%%%%%%%%%%%%%%%
%%%%%%%%%%%%%%%%%%%%%%%%%%%%%%%%%%%%%%%%%%%%%%%%%%%%%%%%%%%%%%%%%%%%%%%%%%%%%

\section*{Introduction}
Let $X$ be a $n$-dimensional smooth irreducible projective variety defined over an algebraically closed field $\rm k$. We will say that a vector bundle $\cF$ over $X$ is \textit{liftable with respect to a line bundle $\cL$} (or \textit{$\cL$-liftable} in short) if there exists a positive integer $m_0$ such that  the restriction map  $$\rho_D:H^0(\cF)\to H^0(\cF|_{D})$$ is an isomorphism for any divisor $D\in |\cL^m|$ and for $m\geq m_0$ (see Definition \ref{def:liftable}). Surjectivity for $m$ large enough, is not garanteed, in general: further positivity assumptions on $\cL$ are needed. For instance,  $\cF$ is $\cL$-liftable as soon as $\cL$ is ample, by Serre’s criterions of vanishing and duality. One can furthermore relaxes this  up to $(n-2)$-ampleness (see \cite[Definition 1.3]{Som2}).
The first intent of this paper is to characterize  $\cL$-liftability for big and semiample line bundles.
We recall that $\cL$ is semiample  if for  some suitable $d>0$,  $\varphi_{|\cL^{d}|}:X\to \mathbb{P}H^0(\cL^{d})^*=\PP^N$  is a morphism. 
Furthermore, $\cL$ semiample is $(n-2)$-ample if has no divisors contracted to points by $\varphi_{|\cL^{d}|}$. 
We will show that, in the general case, the divisors contracted to points play a crucial role in Theorem \ref{thm:doubled} which can be stated as:
\begin{theorem*}{\textbf{A}}
~\\
Let $\cL$ be a big and semiample line bundle on $X$ and let $E$ be the divisor of $X$ contracted to points by $\varphi_{|\cL^d|}$ for $d$ large enough. Then a locally free sheaf $\cF$ on $X$ is $\cL$-liftable  if and only if for all $m>0$ the maps $H^0(\cF)\to H^0(\cF(mE))$ are surjective.
\end{theorem*}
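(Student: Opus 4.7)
After passing to a sufficiently divisible power of $\cL$ and taking the Stein factorization of $\varphi_{|\cL^d|}$, I may assume there is a birational morphism $\pi\colon X\to Z$ with connected fibers onto a normal projective variety $Z$, together with an ample line bundle $\cL_Z$ on $Z$ satisfying $\cL^d\cong\pi^*\cL_Z$; then $E$ is exactly the union of irreducible divisors contracted by $\pi$ to the finite set $\pi(E)=\{p_1,\dots,p_k\}\subset Z$. Via the projection formula, every $D\in|\cL^{md}|$ has the form $\pi^*D_Z$ with $D_Z\in|\cL_Z^m|$, and $D\cap E=\emptyset$ iff $D_Z$ avoids $\pi(E)$, which holds generically for $m\gg 0$.

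The direction $(\Rightarrow)$ reduces to a bigness argument. Assume $\cF$ is $\cL$-liftable, fix $m\geq 1$, and let $\sigma\in H^0(\cF(mE))$. Choose $m'\geq m_0$ a sufficiently large multiple of $d$ with $H^0(\cF(mE)\otimes\cL^{-m'})=0$: such $m'$ exists because the injection $H^0(\cL^{m'})\otimes H^0(\cF(mE)\otimes\cL^{-m'})\hookrightarrow H^0(\cF(mE))$, combined with the polynomial growth of $\dim H^0(\cL^{m'})$ (bigness) against the fixed $\dim H^0(\cF(mE))$, forces the second factor to vanish. Pick $D=\pi^*D_Z\in|\cL^{m'}|$ with $D_Z$ avoiding $\pi(E)$; then $D\cap E=\emptyset$, and $\sigma|_D\in H^0(\cF|_D)$ is a bona fide section, lifting by $\cL$-liftability to some $\tilde\sigma\in H^0(\cF)$. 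The difference $\sigma-\tilde\sigma\in H^0(\cF(mE))$ vanishes along $D$, hence lies in $H^0(\cF(mE)\otimes\cL^{-m'})=0$, so $\sigma=\tilde\sigma\in H^0(\cF)$.

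For the direction $(\Leftarrow)$, fix $D\in|\cL^m|$ with $m$ large. Injectivity of $\rho_D$ is immediate from $H^0(\cF\otimes\cL^{-m})=0$ (bigness). Surjectivity of $\rho_D$ is equivalent to injectivity of the multiplication map $H^1(\cF\otimes\cL^{-m})\xrightarrow{\cdot s_D}H^1(\cF)$, where $s_D$ is the defining section of $D$. A Leray spectral sequence analysis for $\pi$, combined with Kodaira-type vanishing on $Z$ (for the ample $\cL_Z$) and the fact that ample twisting annihilates $H^0$-contributions from positive-dimensional supports, identifies $H^1(\cF\otimes\cL^{-m})$ for $m\gg 0$ with $\bigoplus_i V_i$, where $V_i$ is the stalk at $p_i$ of the torsion subsheaf of $R^1\pi_*\cF$ supported on $\pi(E)$. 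The map $\cdot s_D$ acts on $V_i$ by multiplication by the scalar $s_{D_Z}(p_i)$, vanishing exactly when $D_Z$ passes through $p_i$. Thus injectivity of $\cdot s_D$ for every $D\in|\cL^m|$ is equivalent to $V_i=0$ for every $i$.

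The main obstacle is then to show that this local vanishing $V_i=0$ for all $i$ is in turn equivalent to the hypothesis $H^0(\cF)=H^0(\cF(mE))$ for all $m>0$. My approach would be to iterate the short exact sequences $0\to\cF((k-1)E)\to\cF(kE)\to\cF(kE)|_E\to 0$: the hypothesis forces each connecting map $H^0(\cF(kE)|_E)\to H^1(\cF((k-1)E))$ to be injective, and a formal function / local cohomology argument near each fiber $E_i$ translates these injectivities into the vanishing of $V_i$. Conversely, $V_i=0$ for all $i$ controls the extension of sections of $\cF$ across $E$ and recovers the hypothesis. Making this dictionary between the global section data and the local structure of $R^1\pi_*\cF$ at $\pi(E)$ precise—in particular, showing that a nontrivial torsion class would produce an actual obstructing section in some $H^0(\cF(kE))\setminus H^0(\cF)$—is the technical heart of the argument.
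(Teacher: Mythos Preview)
Your $(\Rightarrow)$ argument is correct and matches the paper's. The gap is in $(\Leftarrow)$, specifically in the Leray step. You assert that $H^1(\cF\otimes\cL^{-m})\cong\bigoplus_i V_i$ for $m\gg 0$ with $V_i$ a stalk of $R^1\pi_*\cF$, but this requires the term $E_2^{1,0}=H^1(Z,\pi_*\cF\otimes\cL_Z^{-m})$ to vanish, and twisting by an \emph{anti}-ample bundle on the possibly singular $Z$ gives no such vanishing for an arbitrary coherent sheaf $\pi_*\cF$; ``Kodaira-type vanishing'' simply does not apply here. Moreover, for $n>2$ the sheaf $R^1\pi_*\cF$ is not a skyscraper: it is supported on the whole image of the exceptional locus, which can be positive-dimensional, so isolating the contribution at the points $p_i$ is not straightforward. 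The paper circumvents both issues by first Serre-dualizing on the smooth $X$: injectivity of $H^1(\cF\otimes\cL^{-a})\to H^1(\cF)$ becomes surjectivity of $H^{n-1}(\cE)\to H^{n-1}(\cE\otimes\cL^a)$ with $\cE=\cF^*\otimes\omega_X$. Now the twist is \emph{positive}, so ordinary Serre vanishing on $Y$ collapses the Leray spectral sequence to $H^{n-1}(\cE\otimes\cL^a)\cong H^0(R^{n-1}\psi_*\cE)$; and $R^{n-1}\psi_*\cE$ is \emph{automatically} a skyscraper on $\{p_i\}$, since only $(n-1)$-dimensional fibers---that is, the contracted divisors $E_i$---can contribute to $R^{n-1}$.

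The ``dictionary'' you correctly flag as the technical heart is then carried out in the paper on this dual side: the hypothesis on $\tau_m$ translates (again by Serre duality) into surjectivity of $\delta_m\colon H^{n-1}(\cE)\to H^{n-1}(\cE|_{mE})$ for all $m$; one forms the inverse system $(H^{n-1}(\cE|_{mE}))_m$, checks Mittag--Leffler for the kernels, and invokes the Theorem on Formal Functions to identify $\varprojlim_m H^{n-1}(\cE|_{mE_i})$ with the completed stalk $(R^{n-1}\psi_*\cE)_{p_i}^{\wedge}$. Your instinct to use a formal-functions/local-cohomology argument is right, but without the duality step the objects in your inverse system are not the ones governed by that theorem.
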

The proof uses the Theorem of formal functions (\cite{HAG}*{III.11}). 
This theorem holds for an algebraically closed field, while the other result that we will present form Section \ref{SEC:EXAMPLES} onward, will essentially be over the complex numbers.
It is not surprising then that in the classical case,  that is when $\rm k$
is the field of complex numbers, the above statement translates into a sort of a concavity result. We borrow the terminology from complex analysis and geometry (see, for  example \cites{Andr,AG,Som1}) and say that $\cF$ is $\cL$-concave if for any divisor 
$D\in |\cL^a|$ with $a\geq 1$ and any open connected neighborhood $U$ of $D$ the restriction map $$\rho_U:H^0(\cF)\to H^0(\cF|_{U})$$ 
is surjective and therefore an isomorphism (see Definition \ref{def:concavity}). The open subset $U$, indeed, behaves in a similar way of a concave set in an analytic space (\cite{Andr}).
\\

We have the following (see Theorem \ref{THM:EXTENSIONFROMOPENS}).

\begin{theorem*}{\textbf{B}}
~\\
Let $X$ be a smooth complex projective variety and let $\cL$ be a big and semiample line bundle on $X$. Then, a vector bundle $\cF$ on $X$ is $\cL$-liftable
if and only if it is $\cL$-concave.
\end{theorem*}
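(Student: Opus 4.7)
The plan is to prove the two implications separately. The direction $\cL$-concave $\Rightarrow$ $\cL$-liftable follows quickly from the characterization in Theorem \ref{thm:doubled}; the reverse implication relies on a stabilization argument for the ``Taylor lifts'' of a local section, combined with Krull's intersection theorem in the analytic local rings of $X$.

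\textbf{Concave $\Rightarrow$ liftable.} By Theorem \ref{thm:doubled}, it is enough to show that the natural inclusion $H^0(\cF)\hookrightarrow H^0(\cF(mE))$ is an equality for every $m>0$. Each component of the contracted divisor $E$ is sent to a point by $\varphi_{|\cL^d|}$, so $\cL|_E$ is trivial; consequently, for $a$ large enough I can find a section $\sigma\in H^0(\cL^a)$ not vanishing at any of the finitely many images of the components of $E$ in $Y$. Its zero divisor $D\in |\cL^a|$ is disjoint from $E$ and admits an open analytic neighborhood $U\subset X\setminus E$. For any $t\in H^0(\cF(mE))$, the restriction $t|_U\in H^0(\cF|_U)$ is holomorphic, and by $\cL$-concavity it extends to $\tilde t\in H^0(\cF)$. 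The difference $\tilde t-t$ is then a section of the locally free sheaf $\cF(mE)$ vanishing on the open set $U\subset X\setminus E$; by analytic continuation it vanishes on the connected dense open $X\setminus E$, and then on all of $X$ by torsion-freeness. Hence $t=\tilde t\in H^0(\cF)$.

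\textbf{Liftable $\Rightarrow$ concave.} Fix $a\geq 1$, a divisor $D\in|\cL^a|$ cut out by $\sigma\in H^0(\cL^a)$, a connected open neighborhood $U$ of $D$, and a section $s\in H^0(\cF|_U)$. For every $k\geq 1$ the subscheme $kD:=V(\sigma^k)\subset X$ is supported on $|D|\subset U$, so $s|_{kD}\in H^0(\cF|_{kD})$ is well defined. Since $kD\in|\cL^{ak}|$, as soon as $ak\geq m_0$ the $\cL$-liftability hypothesis yields $\tilde s_k\in H^0(\cF)$ with $\tilde s_k|_{kD}=s|_{kD}$. The difference $\tilde s_{k+1}-\tilde s_k$ vanishes on $kD$, hence lies in $H^0(\cF\otimes\cL^{-ak})$, which is zero for $k$ large since $\cL$ is big. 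So the $\tilde s_k$ eventually stabilize to a global section $\tilde s\in H^0(\cF)$ agreeing with $s$ to arbitrarily high order along $D$. In a local analytic chart where $D=\{f=0\}$ and $\cF$ is free, each entry of $s-\tilde s$ lies in $\bigcap_{k}(f^k)=0$ by Krull's intersection theorem applied to the Noetherian local ring $\cO_{X,x}^{\mathrm{an}}$, so $s-\tilde s$ vanishes in an open analytic neighborhood of $D$ in $U$. Analytic continuation on the connected open $U$ then gives $s=\tilde s$ on $U$.

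\textbf{Main obstacle.} The most delicate step is the final passage from formal equality of $s$ and $\tilde s$ on every thickening $kD$ to a genuine analytic identity on a neighborhood of $D$. Krull's theorem handles this pointwise, but one has to ensure that $D$ has been chosen so that its local equation $f$ is a non-zero divisor at each point of its support, which is automatic for a reduced member of $|\cL^a|$ (a generic one). A minor secondary subtlety, in the first direction, is the availability of $D\in|\cL^a|$ disjoint from $E$, which follows from the triviality of $\cL|_E$ together with the very ampleness of $\cO_Y(ja)$ on the image $Y$ for $j\gg 0$.
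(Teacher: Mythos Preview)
Your proof is correct and follows essentially the same approach as the paper: the concave $\Rightarrow$ liftable direction is the direct version of the paper's contrapositive argument via Theorem \ref{thm:doubled}, and for liftable $\Rightarrow$ concave both you and the paper produce a single global lift agreeing with $s$ on every thickening $kD$ and then conclude by a local power-series (Krull) argument near $D$ followed by analytic continuation on $U$.

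One small remark: the ``main obstacle'' you flag is not actually an obstacle. Since $X$ is smooth, each analytic local ring $\cO_{X,x}^{\mathrm{an}}$ is a regular local ring, and for $x\in D$ the local equation $f$ lies in the maximal ideal, so $\bigcap_k (f^k)\subset \bigcap_k \mathfrak{m}^k=0$ by Krull regardless of whether $D$ is reduced. Hence your argument works for every $D\in|\cL^a|$, as the definition of $\cL$-concavity requires, and there is no need to restrict to a generic or reduced divisor.
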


In Section $2$ we give examples of surfaces to add value the above results. We investigate more precisely the cotangent bundle $\Omega^1_S$ of a smooth projective surface $S$ and show that such a sheaf can be either $\cL$-concave or not with respect to a big and semiample line bundle $\cL$.
We furthermore raise some questions about surfaces in the Noether-Lefschetz locus of $\PP^3$ (see Question \ref{QUE:SURFINP3}).
The importance of the cotangent bundle in this paper is much deeper and will appear evident in a moment. 

We are aware, also in view of the results of Totaro \cite{Tot} and Ottem \cite{Ott}, that it could be really  interesting to drop the assumption of semiampleness. This seems to us technically difficult at the moment and not necessary to tackle the problem that motivated all these studies.
\\ 

Let us introduce our motivating problem. Let $\pi:\cC\to B$ be a smooth holomorphic family of compact Riemann surfaces of genus $g$. During the preparation of \cite{BCFP}, Indranil Biswas explained to the second author of this article that $\cC^\infty$-families of projective structures on $C_t=\pi^{-1}(t),$ with $t\in B,$ are in one to one correspondence with $\overline \partial$-closed $\cC^\infty$ $(1,1)$-forms on $B$ with fixed cohomology class, modulo {\underline{holomorphic $(1,0)$-forms}} of $B$. For details, see Section $3$ of \cite{BCFP}. He raises then the problem of the existence of global holomorphic forms on $\cM_g$, the moduli space of compact Riemann surfaces, that is of smooth complex projective curves, of genus $g$. 
\\

It is well known, at least since Mumford \cite{Mu}, that there
are no closed holomorphic $1$-forms on $\cM_g$, and a proof of this will be outlined in Section $3$. We could not find any result in literature concerning non-closed holomorphic forms.																			
Our result, which can be seen as a concavity result, is the following (see Theorem \ref{THM:NOONEFORMS}).
\begin{theorem*}{\textbf{C}}
~\\
Let  $\Mgo\subset \cM_g$ be the smooth locus. Then for $g\geq 5$, $\Mgo$ has no holomorphic $1$-forms, that is $H^0(\Omega^1_{\Mgo})=0$.
\end{theorem*}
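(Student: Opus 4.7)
The plan is to apply the $\cL$-concavity theorem to $\cL=\lambda$, the Hodge line bundle on $\MDM$, and thereby reduce the existence of a holomorphic $1$-form on $\Mgo$ to a local statement on neighborhoods of divisors in the linear system $|\lambda^{\otimes a}|$, which can then be ruled out by the geometry of the Deligne--Mumford boundary and the Torelli--Satake contraction $\MDM\to \MSat\subset\ASat$.

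First I would extend a given $\omega\in H^0(\Omega^1_{\Mgo})$ to a global object on a smooth compactification. Since for $g\geq 4$ the singular locus of $\cM_g$ (curves with nontrivial automorphisms) has codimension $\geq 2$, Hartogs gives a reflexive extension to $\cM_g$. Choosing a resolution $p\colon \widetilde X\to \MDM$ with simple normal crossings boundary $\widetilde B=p^{-1}(\partial\cM_g)_{\mathrm{red}}$ and invoking Deligne's canonical extension (the local monodromy around each component of $\widetilde B$ is unipotent by Picard--Lefschetz), one obtains a section $\widetilde\omega$ of $\Omega^1_{\widetilde X}(\log \widetilde B)$ restricting to $\omega$. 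The pullback $\widetilde \lambda=p^*\lambda$ is big and semiample on $\widetilde X$, and I would identify the exceptional divisor $E\subset\widetilde X$ of the Torelli--Satake morphism: it is the union of the boundary components collapsed to lower-dimensional strata of $\ASat$, prominently containing the proper transform of the divisor $\Delta_0$ of irreducible nodal curves.

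By the main theorem, if the locally free sheaf $\cF=\Omega^1_{\widetilde X}(\log\widetilde B)$ (or a suitable modification) is $\widetilde\lambda$-liftable --- equivalently if $H^0(\cF)\to H^0(\cF(mE))$ is surjective for all $m\geq 1$ --- then by $\widetilde\lambda$-concavity, $\widetilde\omega$ is determined by its germ along any connected neighborhood of any divisor in $|\widetilde\lambda^{\otimes a}|$. Choosing such a neighborhood to meet the boundary only along strata whose geometry is well-understood, one can show that $\widetilde \omega$ is forced to vanish: on boundary strata of type $\Delta_i$ with $i\geq 1$, the form descends to a product $\mathcal{M}_{i,1}\times\mathcal{M}_{g-i,1}$ of lower-genus moduli spaces with a marked point and is killed by induction on $g$; along $\Delta_0$ the positive-dimensional fibers of the Satake map, together with a residue computation, do the rest. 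The base cases of the induction (where either $\codim\Sing(\cM_g)$ drops or the boundary structure becomes too simple for the reduction) are what impose the numerical bound $g\geq 5$.

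The main obstacle I foresee is verifying the $\widetilde\lambda$-liftability of $\cF$. Concretely, this amounts to bounding the orders of poles a logarithmic $1$-form on $\widetilde X$ can acquire along $E$ in terms of the positivity of $\widetilde\lambda$, and is a fine statement on how the Hodge bundle behaves near $\Delta_0$ under the Torelli--Satake contraction. A secondary difficulty is ensuring the extension and restriction steps behave well with respect to the orbifold structure on $\cM_g$, so that reflexive extension across $\cM_g\setminus\Mgo$ and canonical extension across $\partial\cM_g$ can be combined coherently.
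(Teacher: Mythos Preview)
Your extension step is the fatal gap. Deligne's canonical extension produces a vector bundle on the compactification from a flat bundle with unipotent monodromy; it does \emph{not} extend arbitrary holomorphic sections of $\Omega^1$ across the boundary. A holomorphic $1$-form on an open variety has no reason to acquire only logarithmic poles on a compactification (think of $z^{-2}\,dz$ on $\bC^{\ast}$), and this is precisely the difficulty the paper is built to circumvent: on $\Mgo$ forms are not known to be closed, and there is no Hartogs/Hodge-theoretic mechanism that promotes a section of $\Omega^1_{\Mgo}$ to a section of $\Omega^1_{\widetilde X}(\log\widetilde B)$. If your extension were valid, the vanishing would already follow from $H^1(\Mgo,\bC)=0$ via mixed Hodge theory, and the liftability/concavity machinery would be superfluous.

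The paper's route is quite different from yours. It never works on the full $\MDM$ or a resolution thereof; instead it slices down to a general $H$-\emph{surface} $S\subset\MDM$ (a complete intersection of $3g-5$ members of $|aH|$). For $g\geq 5$ such an $S$ is smooth, avoids $\Sing(\MDM)$ and every $\Delta_i$ with $i\neq 1$, and meets the boundary only in $E=S\cap\Delta_1$, a disjoint union of smooth curves with negative self-intersection. On $S$ the Satake map contracts $E$ to \emph{points}, so Theorem~\ref{thm:doubled} applies with $\cL=\OO_S(H)$; the liftability hypothesis $H^0(\Omega^1_S(mE))=0$ is then a concrete curve-on-surface computation using the conormal sequence of $E\subset S$ and the independence of the Chern classes $c_1(E_i)$, together with $h^0(\Omega^1_S)=0$ from Kawamata--Viehweg. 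A nonzero $\eta\in H^0(\Omega^1_{\Mgo})$ would restrict nontrivially to $U=S\setminus E=S\cap\Mgo$, contradicting $\cL$-concavity of $\Omega^1_S$. Note in particular that the relevant contracted divisor is $\Delta_1$ (restricted to $S$), not $\Delta_0$, and there is no induction on $g$: the bound $g\geq 5$ comes from needing $\codim\,\Sing(\cM_g)=g-2\geq 3$ so that the general $H$-surface misses the singular locus.
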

%As a remarkable consequence, the theorem solves the correspondent problem at the level of moduli stack of curves, by interpreting $\Mgo$ as chart of the stack.

The proof of the theorem uses the Deligne-Mumford compactification $\MDM$ and the Satake map
$\Sat:\MDM\to \ASat$. Since  $\MSat:=\Sat(\MDM)$ is a projective variety, we intersect $\MSat$  with suitable general $3g-5$ and  $3g-4$ hyperplanes, respectively. By taking the inverse image on $\MDM,$ we reduce our problem  to curves and surfaces in $\MDM.$  We call these, respectively, $H$-surfaces and $H$-curves.
It is easy to show that for $g>3$ the general $H$-curve is contained in $\Mgo$   and  that a general  $H$-surface intersects the boundary of $\MDM$ only on $\Delta_1,$ the locus of stable curves with an elliptic tail. 
We apply our Theorem \ref{thm:doubled} to a general $H$-surface $S$ and $\cL=\cO_S(C)$, with $C$ general $H$-curve contained in $S$. 
Using the fact that the contracted divisor is exaclty $E=\Delta_1\cap S$, we show that $\Omega^1_{S}$ is  $\cL$-liftable and $H^0(\Omega^1_{S})=0.$
\\

We strengthen the above result by proving the following theorem which has the flavour of a concavity result (see Theorem \ref{MgCON}).
\begin{theorem*}{\textbf{D}}
~\\
Let $C$ be a $H$-curve and let $U\subset \Mgo$  be a connected open neighborhood of $C$
for the classical topology. Then, for $g\geq 5$, $H^0(\Omega^1_U)=0$.
\end{theorem*}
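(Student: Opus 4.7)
Take $\omega \in H^0(U, \Omega^1_U)$; the aim is to prove $\omega = 0$. The strategy is to cover the tangent space $T_p \Mgo$ at a generic point $p \in U$ by tangent spaces of $H$-surfaces whose associated $H$-curves remain inside $U$, and to invoke on each of them the concavity statement of Theorem~\ref{THM:EXTENSIONFROMOPENS} in the form already used in the proof of Theorem~\ref{THM:NOONEFORMS}.

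Recall that the proof of Theorem~\ref{THM:NOONEFORMS} establishes that for a general $H$-curve $C'$ and a general $H$-surface $S' \supset C'$ the bundle $\Omega^1_{S'}$ is $\cO_{S'}(C')$-liftable, hence $\cO_{S'}(C')$-concave by Theorem~\ref{THM:EXTENSIONFROMOPENS}, and satisfies $H^0(\Omega^1_{S'})=0$. Assume further that $C' \subset U$, so that $S' \cap U$ is a connected open neighborhood of the divisor $C' \in |\cO_{S'}(C')|$ in $S'$. Pushing $\omega|_{S' \cap U}$ through the natural surjection $\Omega^1_{\Mgo}|_{S' \cap U} \to \Omega^1_{S'}|_{S' \cap U}$ yields a section of $\Omega^1_{S'}|_{S' \cap U}$ which, by $\cO_{S'}(C')$-concavity, extends to a global section of $\Omega^1_{S'}$; this global section must vanish, so $\omega_q(v)=0$ for every $q \in S' \cap U$ and every $v \in T_q S'$. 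In particular, $\omega_p$ vanishes on $T_pS'$ as soon as $p \in S'\cap U$.

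It remains to show that for a generic $p \in U$ and every $v \in T_p \Mgo$ one can produce an admissible pair $(C',S')$ with $p \in S'$ and $v \in T_p S'$. In the projective embedding $\MSat \hookrightarrow \PP^N$, $H$-curves and $H$-surfaces correspond to linear sections by subspaces $L_{C'}$ and $L_{S'}$ of codimension $3g-4$ and $3g-5$ respectively. The condition $C' \subset U$ is open in the Grassmannian of $L_{C'}$'s, because $C \subset \MDM$ is compact (being the proper preimage of a closed linear section under $\Sat$) and $U$ is open; combined with the usual genericity requirements it defines a nonempty open set $\cV$. Fix $p$ and set $L_{S'} := L_{C'} + \langle p \rangle$, so that $p \in S'$ automatically; the further condition $v \in T_p S'$ translates into the Schubert-type condition $\tilde L_{C'} \cap \langle \tilde p,\tilde v \rangle \neq 0$ on the underlying vector subspace, which has codimension $3g-5$ in the Grassmannian. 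Since $\dim \cV$ largely exceeds $3g-5$, this Schubert cycle meets $\cV$ in a nonempty subvariety, and any general point of the intersection provides the desired pair $(C',S')$.

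Combining the two steps, $\omega_p(v)=0$ for every $v \in T_p \Mgo$ and every $p$ in a dense open subset of $U$; since $U$ is connected, this forces $\omega \equiv 0$. The main obstacle is the bookkeeping required to verify that, once $L_{C'}$ is constrained by the Schubert condition associated with $(p,v)$, the resulting surface $S' = L_{S'}\cap \MSat$ still satisfies all the genericity hypotheses required by Theorem~\ref{THM:NOONEFORMS} (for instance, that $p$ lies in the smooth locus of $S'$ and that the construction of the liftability argument goes through); these are open conditions in the parameter space and remain compatible with the codimension $3g-5$ Schubert constraint.
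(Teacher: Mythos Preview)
Your approach uses the same core tool as the paper---$\cL$-concavity of $\Omega^1_{S'}$ on a general $H$-surface $S'$ containing an $H$-curve $C'\subset U$---but organises the argument differently, and this reorganisation introduces two avoidable difficulties.

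The paper proceeds by contradiction: assuming $\eta\neq 0$, it first observes that the $H$-curves near $C$ sweep out an open set $U'\subset U$ and their tangent lines span $T_pU'$ at a general point; hence there is some $H$-curve $C'\subset U$ with $\eta|_{C'}\neq 0$ in $H^0(\Omega^1_{C'})$. Only then does it choose a general $H$-surface $S\supset C'$, with no further constraint on $S$ at all. Taking $U_S$ to be the connected component of $S\cap U$ containing $C'$, the restriction $\eta|_{U_S}$ is nonzero (since its further restriction to $C'$ is), while concavity forces $H^0(\Omega^1_{U_S})\simeq H^0(\Omega^1_S)=0$. This is the whole proof.

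Your version instead imposes the extra Schubert-type condition that $S'$ pass through a prescribed point $p$ with a prescribed tangent $v$, and then argues pointwise that $\omega_p(v)=0$. This buys you nothing and costs you two things. First, you assert that $S'\cap U$ is connected, but there is no reason it should be; the correct object is the connected component containing $C'$, and you then need $p$ to lie in \emph{that} component, which is not automatic when $p\notin C'$. (An easy fix is to require $p\in C'$ from the start, but then $L_{S'}=L_{C'}+\langle p\rangle$ no longer raises the dimension.) Second, as you yourself note, the Schubert constraint eats into the genericity needed for Proposition~\ref{PROP:PROPOFS} and Lemma~\ref{LEM:SURJSUP}; in the paper's argument $S$ is only required to contain $C'$, which is a much milder condition and leaves ample room for all the genericity hypotheses. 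Both issues disappear if you adopt the paper's two-step reduction: first pin down non-vanishing on a curve, then thicken to a surface with no further constraints.
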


Our last result, contained in Subsection \ref{SUBS:MARKED}, is an extension of Theorem \ref{THM:NOONEFORMS} to the case of moduli of marked curves. More precisely, if $\cM_{g,n}^o$ is the smooth locus of $\cM_{g,n}$, we have the following (see Theorem \ref{THM:NOONEFORMSMARKED}).
\begin{theorem*}{\textbf{E}}
~\\
Let $g\geq 5$. Then $H^0(\Omega^1_{\cM_{g,n}^o})=0$, for all $n\geq 0$.
\end{theorem*}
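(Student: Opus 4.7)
The plan is to mimic the proof of Theorem \ref{THM:NOONEFORMS} on $\overline{\cM}_{g,n}$ by transporting the Hodge-class setup through the forgetful morphism $F\colon\overline{\cM}_{g,n}\to\overline{\cM}_g$. Set $\lambda_n := F^*\lambda$, where $\lambda$ is the Hodge class on $\overline{\cM}_g$. Since $F$ is surjective and projective and $\lambda$ is big and semi-ample, $\lambda_n$ is big and semi-ample on $\overline{\cM}_{g,n}$. Because $F$ is equidimensional of relative dimension $n$ it contracts no divisor, so the $\lambda_n$-contracted divisor $E_n$ equals $F^{-1}(\Delta_1)$: the union of the boundary divisors $\delta_{1,I}\subset\overline{\cM}_{g,n}$ parameterising stable pointed curves with an elliptic tail carrying the marked points indexed by $I\subset\{1,\dots,n\}$.

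Next I would import the $H$-surface/$H$-curve construction from the proof of Theorem \ref{THM:NOONEFORMS}: for $k\gg 0$ intersect $\overline{\cM}_{g,n}$ with $\dim\overline{\cM}_{g,n}-2=3g-5+n$ generic hyperplanes from $|\lambda_n^k|$ to produce a smooth surface $S_n$, and with one further hyperplane to produce a smooth curve $C_n\subset S_n$. For $g\geq 5$, a standard Bertini/dimension count places $C_n$ inside $\cM_{g,n}^o$ and makes $S_n\cap(\overline{\cM}_{g,n}\setminus\cM_{g,n}^o)$ equal to $S_n\cap E_n$. Theorem \ref{thm:doubled} applied to $(S_n,\Omega^1_{S_n},\cO_{S_n}(C_n))$ then reduces $\lambda_n|_{S_n}$-liftability of $\Omega^1_{S_n}$ to the surjectivity of $H^0(\Omega^1_{S_n})\to H^0(\Omega^1_{S_n}(m(E_n\cap S_n)))$ for every $m\geq 0$.

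The heart of the argument is then to verify this surjectivity along each component of $E_n\cap S_n$. Such a component parameterises pointed curves with an elliptic tail, and locally the boundary splits as a product $\overline{\cM}_{1,1}\times\overline{\cM}_{g-1,n+1}$, whose $\overline{\cM}_{1,1}$-factor provides the uniruled direction needed to prevent $1$-forms from acquiring higher-order poles transversally. The same pole-extension argument used in the $n=0$ case then carries over, so every section of $\Omega^1_{S_n}(m(E_n\cap S_n))$ descends to a section of $\Omega^1_{S_n}$. Combining $\lambda_n|_{S_n}$-liftability with the concavity result (Theorem \ref{THM:EXTENSIONFROMOPENS}), any $\omega\in H^0(\Omega^1_{\cM_{g,n}^o})$ extends from a neighborhood of $C_n$ to a global section of $\Omega^1_{S_n}$; varying the $H$-data exactly as in the $n=0$ case then forces $\omega=0$.

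The main obstacle is the surjectivity step along the additional boundary divisors $\delta_{1,I}$ with $I\neq\emptyset$ that appear only in the marked setting. Although the local product structure $\overline{\cM}_{1,1}\times\overline{\cM}_{g-1,n+1}$ still furnishes the uniruled factor needed for the argument, one must verify case-by-case that distributing some of the marked points to the elliptic tail does not create new obstructions to extension of $1$-forms. This is a direct but more delicate analog of the computation carried out for Theorem \ref{THM:NOONEFORMS}, and is where most of the technical work lies.
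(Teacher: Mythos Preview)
Your approach has a genuine gap at the very first step: the class $\lambda_n = F^*\lambda$ is \emph{not} big on $\overline{\cM}_{g,n}$. The forgetful morphism $F\colon \overline{\cM}_{g,n}\to \overline{\cM}_g$ has fibers of dimension $n>0$, and $F^*\lambda$ is trivial along these fibers; hence $(F^*\lambda)^{3g-3+n}=0$ and $\lambda_n$ is only semiample, never big. Theorem~\ref{thm:doubled} and the whole $H$-surface machinery require bigness of $\cL$ so that $\varphi_{|\cL^d|}$ is birational onto its image; here the induced map is $\varphi_{|a\lambda|}\circ F$, which collapses every fiber of $F$, so the ``general $H$-surface'' you would cut out by $3g-5+n$ hyperplanes in $|\lambda_n^k|$ is not a surface at all but has dimension at least $n$. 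In particular, the key hypothesis of Theorem~\ref{thm:doubled} fails, and the pole-extension analysis along the $\delta_{1,I}$ never gets off the ground. (Your sentence ``Because $F$ is equidimensional of relative dimension $n$ it contracts no divisor'' is a red herring: what matters is whether $\varphi_{|\lambda_n^d|}$ is birational, and it is not.)

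The paper takes a completely different and much shorter route: it argues by induction on $n$, using only the $n=0$ case (Theorem~\ref{THM:NOONEFORMS}) as input and never reopening the $H$-surface argument. One restricts to the open set $\Ug{n}\subset\cM_{g,n}^o$ of curves with trivial automorphisms, where the forgetful map $f\colon\Ug{n}\to\Ug{n-1}$ is a smooth family of genus-$g$ curves. The relative cotangent sequence, after pushforward, gives
\[
0\to \Omega^1_{\Ug{n-1}}\to f_*\Omega^1_{\Ug{n}}\to f_*\Omega^1_{\Ug{n}/\Ug{n-1}}\xrightarrow{\partial} R^1f_*\cO_{\Ug{n}}\otimes\Omega^1_{\Ug{n-1}},
\]
and the whole point is that $\partial$ is injective: at a point $[C,p_1,\dots,p_{n-1}]$ it is identified, via the factorisation through $\Mgo$, with the dual of the multiplication map $H^0(\omega_C)^{\otimes 2}\to H^0(\omega_C^2)$, i.e.\ the IVHS of the family. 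Injectivity of $\partial$ forces $H^0(\Omega^1_{\Ug{n}})\simeq H^0(\Omega^1_{\Ug{n-1}})$, and induction finishes. So the ``extra ingredient'' here is Hodge-theoretic (the IVHS), not a repeat of the concavity/liftability argument on a larger moduli space.
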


% {\color{orange} The proof is straightforward,
% but uses the extra-ingredient of the infinitesimal variation of Hodge structures. We would also like to mention that the existence of possibly non closed holomorphic forms in a neighborhood of a compact curve plays a subtle role in the infinitesimal variation of its periods (\cites{PT,GST,GT,FT}). \color{orange} Similar results should hold at least for many families of curves, for instance the case of smooth plane curves is treated implicitly in \cite{FNP} and \cite{PT}. 
% However, the most interesting problem arising from our result would probably be to consider holomorphic $p$-forms on the moduli space of curves. The methods used for $1$-forms seem to us insufficient at the moment to deal with these more general cases.
% \smallskip

The proof of Theorem E is straightforward,
but uses the extra-ingredient of the infinitesimal variation of Hodge structures. We would also like to mention that the existence of possibly non closed holomorphic forms in a neighborhood of a compact curve plays a subtle role in the infinitesimal variation of its periods (\cites{PT,GST,GT,FT}).  Similar results should hold at least for many families of curves, for instance the case of smooth plane curves is treated implicitly in \cite{FNP} and \cite{PT}. 
\smallskip

As a remarkable consequence, Theorems C and E solve the correspondent problems at the level of moduli stack of curves (possibly, with marked points), by interpreting $\cM_{g,n}^o$ as an open subset of the stack.

\begin{corollary*}
For $g\geq 5$, the moduli stack of curves with $n\geq 0$ marked points has no holomorphic $1$-forms.
\end{corollary*}

To conclude, the most interesting problem arising from our result would probably be to consider holomorphic $p$-forms on the moduli space of curves. The methods used for $1$-forms seem to us insufficient at the moment to deal with these more general cases.

\subsection*{Acknowledgements}

\begin{small}
We would like to thank Indranil Biswas for introducing this interesting problem to us, Francesco Bonsante and Riccardo Salvati Manni for helpful remarks and discussions on the Teichmuller space and on the compactification of the moduli space of curves and Alessandro Ghigi for enlightening explainations on the relations between concavity and extensions of holomorphic functions. The authors want to express their gratitude to the anonymous referee for the helpful remarks and suggestions. \\
All authors are partially supported by INdAM - GNSAGA. The first named author was partially supported by ``2017-ATE-0253'' (Dipartimento di Matematica e Applicazioni - Universit\`a degli Studi di Milano-Bicocca) and is now partially supported by INdAM-GNSAGA Project "Classification Problems in Algebraic Geometry: Lefschetz Properties and Moduli Spaces” (CUP$\_$E55F22000270001).
The second named author is partially supported by PRIN 2017\emph{``Moduli spaces and Lie theory''} and by (MIUR): Dipartimenti di Eccellenza Program (2018-2022) - Dept. of Math. Univ. of Pavia. The third author is supported by Alexander von Humboldt Foundation.
\end{small}
%%%%%%%%%%%%%%%%%%%%%%%%%%%%%%%%%%%%%%%%%%%%%%%%%%%%%%%%%%%%%%%%%%%%%%%%%%%%%
%%%%%%%%%%%%%%%%%%%%%%%%%%%%%%%%%%%%%%%%%%%%%%%%%%%%%%%%%%%%%%%%%%%%%%%%%%%%%
%%%%%%%%%%%%%%%%%%%%%%%%%%%%%%%%%%%%%%%%%%%%%%%%%%%%%%%%%%%%%%%%%%%%%%%%%%%%%

\section{Surjectivity of restriction maps}
\label{SEC:MAINRES}

Through all this section $X$ will be a smooth projective variety of dimension $n\geq 2$ over an algebraically closed field $\rm k$.  For any $\cL$ big and semiample line bundle on $X$, let  $d_0\in \mathbb{N}$ be a positive integer such that for any $d\geq d_0$ the morphism 
$$\psi_{|\cL^{d}|}:X\to \mathbb{P}H^0(\cL^{d})^*=\PP^{N_d}$$ 
is birational onto its image and the map $\psi_{|\cL^{d}|}:X\to \psi_{|\cL^{d}|}(X)$ does not depend on $d$. We set $Y=\psi_{|\cL^{d}|}(X)$, $\psi: X\to Y$ the induced  morphism, $E\subset X$ the divisor contracted to points and $E_i$ the connected divisor contracted to the point $p_i$. Notice that in particular the divisor $E$ does not depend on $d$.

\begin{remark}
\label{REM:SEMIAMPLE}
Notice that $\cL=\psi^*(\cL')$ where $\cL'$ is an ample line bundle on $Y$. Indeed, by assumption the map induced by $\cL^d$ and $\cL^{d+1}$ are the same and so $\cL^{d}=\psi^*\cO_{\bP^{N_d}}(1)_{|Y}$, for any $d\geq d_0$. Therefore,  $\cL=\cL^{d+1}\otimes \cL^{-d}=\psi^*( \cO_{\bP^{N_{d+1}}}(1)_{|Y}\otimes\cO_{\bP^{N_{d}}}(1)^{-1}_{|Y})$ as claimed.
\end{remark}

% Denote by $Y$ the image of $\psi_{|\cL^{d}|}$ and by $\psi: X\to Y$ the birational morphism that factors $\psi_{|\cL^{d}|}$ through the inclusion $i:Y\to \PP^N$. Let $E\subset X$ be the divisor contracted to points by $\psi$ and denote by $E_i$ the connected divisor contracted to the point $p_i$.

\smallskip
Let $\cF$ be a locally free sheaf on $X$. For $a$ large enough consider $D_a\in |\cL^a|$ and take the short exact sequence induced by $\cO_{X}(-D_a)\subset \cO_X$ twisted with $\cF$
\begin{equation}\label{sxs:Extendable}
0\to \cF(-D_a)\to \cF\to \cF_{|D_a}\to 0.
\end{equation}

\begin{definition}
\label{def:liftable} We say that $\cF$ is $\cL$-liftable if the map 
	\begin{equation}\label{map:restriction} \rho_a\,:\,H^0(\cF)\to H^0(\cF_{|D_a})\end{equation}
	induced by \eqref{sxs:Extendable} is an isomorphism, for all $a$ large enough and any $D_a\in |\cL^a|$. 
\end{definition}
Consider also the short exact sequence 
\begin{equation}\label{sxs:contractedDivisor}
0\to \cF\to \cF(mE)\to \cF(mE)_{|_{mE}}\to 0.
\end{equation}

This section is dedicated to prove the following Theorem. 
\begin{theorem}\label{thm:doubled}
	$\cF$ is $\cL$-liftable if and only if  for $m\geq 0$, $\tau_m\,:\,H^0(\cF)\to H^0(\cF(mE))$ induced by \eqref{sxs:contractedDivisor} is surjective.
\end{theorem}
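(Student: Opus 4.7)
The plan is to exploit the morphism $\psi\colon X\to Y$ provided by $\varphi_{|\cL^d|}$ (choosing $d$ so that $\cL^d = \psi^*\cO_Y(1)$) and, for $a=dk$, to write each divisor $D_a\in|\cL^a|$ as $\psi^*H_a$ with $H_a\in|\cO_Y(k)|$. The two main tools I will use are (i) the projection formula combined with Serre vanishing on the projective variety $Y$, giving $H^i(Y,\cG\otimes \cO_Y(-k))=0$ for $0\leq i<\dim Y$ and any coherent $\cG$ on $Y$ (to be applied both to $\psi_*\cF$ and to $\psi_*(\cF(mE))$), and (ii) the Theorem of Formal Functions, which identifies the stalk $(R^1\psi_*\cF)_{p_i}$ with $\varprojlim_r H^1(rE_i,\cF|_{rE_i})$ at each contracted point $p_i=\psi(E_i)$.

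For the implication ``$\cL$-liftable $\Rightarrow$ $\tau_m$ surjective'', given $s\in H^0(\cF(mE))$ I would fix $a=dk$ large and choose a general $D_a\in|\cL^a|$ disjoint from $E$: this is possible because $\{p_i\}$ is finite and $H_a$ can be moved off these points. Since $D_a\cap E=\emptyset$ one has $\cF(mE)|_{D_a}=\cF|_{D_a}$, so $s|_{D_a}$ makes sense and lifts through the isomorphism $\rho_a$ to some $\tilde s\in H^0(\cF)$. The difference $s-\tilde s$ lies in $H^0(X,\cF(mE)\otimes \cL^{-a})\cong H^0(Y,\psi_*(\cF(mE))\otimes \cO_Y(-k))$, which vanishes for $a$ large by tool (i); therefore $s=\tilde s\in H^0(\cF)$ and $\tau_m$ is surjective.

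For the converse, injectivity of $\rho_a$ for large $a$ is immediate from the same vanishing applied to $\psi_*\cF$. For surjectivity I would prove that $\phi_a\colon H^1(\cF(-D_a))\to H^1(\cF)$ is injective. Pushing forward \eqref{sxs:Extendable} via $\psi$ and invoking the Leray spectral sequence together with (i), one identifies, for $a$ large, $H^1(X,\cF(-D_a))$ with a subspace of $H^0(Y,R^1\psi_*\cF\otimes \cO_Y(-k))$, under which $\phi_a$ corresponds to the map induced by the defining section $s_{H_a}$ of $H_a$. When $H_a$ avoids every $p_i$, the section $s_{H_a}$ is a unit at each stalk of $R^1\psi_*\cF$ and $\phi_a$ is automatically injective; the nontrivial content is to handle special $H_a$ passing through some $p_i$, where multiplication by $s_{H_a}$ on the finite-length module $(R^1\psi_*\cF)_{p_i}$ has a nontrivial kernel.

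The hard part is precisely this last step: exploiting $\tau_m$-surjectivity for every $m$ to annihilate that kernel. I would first rephrase the hypothesis as the identity $H^0(X,\cF)=\varinjlim_m H^0(\cF(mE))=H^0(X\setminus E,\cF)$, a Hartogs-type extension property for sections of $\psi_*\cF$ across the finite set $\{p_i\}$. Combining this identity with the formal-functions description of $(R^1\psi_*\cF)_{p_i}$ (where the inverse system stabilises by Mittag-Leffler since the limit is finite-dimensional) and playing off the two short exact sequences \eqref{sxs:Extendable} and \eqref{sxs:contractedDivisor}, one expects the $\tau_m$ hypothesis to force the relevant kernel to vanish and hence $\phi_a$ to be injective for every choice of $D_a\in|\cL^a|$, completing the proof.
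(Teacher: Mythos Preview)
Your forward implication (liftable $\Rightarrow$ $\tau_m$ surjective) is fine and is essentially what the paper does, via the commutative square built from $\rho_a$ and $\rho_a'$.

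For the converse, however, there is a genuine gap. Your tool~(i) is false as stated: the vanishing $H^i(Y,\cG\otimes\cO_Y(-k))=0$ for $0\le i<\dim Y$ and $k\gg 0$ does \emph{not} hold for an arbitrary coherent sheaf $\cG$ on a (possibly singular) projective variety $Y$; it requires depth conditions on $\cG$. (Already $\cG=\cI_p$ on $\bP^2$ gives $H^1(\cI_p(-k))\neq 0$ for all $k>0$.) In your situation $Y$ is typically singular at the points $p_i$, and there is no reason for $\psi_*\cF$ to satisfy the needed depth condition, so your identification of $H^1(X,\cF(-D_a))$ as a subspace of $H^0(Y,R^1\psi_*\cF(-k))$ via Leray is not justified: the term $E_2^{1,0}=H^1(Y,\psi_*\cF(-k))$ need not vanish. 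Worse, for $n\ge 3$ the sheaf $R^1\psi_*\cF$ is \emph{not} supported only at the points $p_i$ (the fibres of $\psi$ can have positive dimension over a locus of positive dimension), so your stalkwise analysis of multiplication by $s_{H_a}$ breaks down even for general $H_a$. Finally, the ``hard part'' you flag is left as an expectation rather than an argument; there is no visible mechanism linking the $H^0$-type Hartogs statement $H^0(\cF)=H^0(X\setminus E,\cF)$ to injectivity of a map on $H^1$.

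The paper sidesteps all of these issues by first applying Serre duality on the \emph{smooth} variety $X$: setting $\cE=\cF^*\otimes\omega_X$, surjectivity of $\rho_a$ becomes surjectivity of $H^{n-1}(\cE)\to H^{n-1}(\cE\otimes\cL^a)$, and surjectivity of $\tau_m$ becomes surjectivity of $\delta_m\colon H^{n-1}(\cE)\to H^{n-1}(\cE|_{mE})$. This buys two things at once. First, the twist is now \emph{positive}, so ordinary Serre vanishing on $Y$ kills the higher Leray terms and gives $H^{n-1}(\cE\otimes\cL^a)\simeq H^0(R^{n-1}\psi_*\cE)$. Second, $R^{n-1}\psi_*\cE$ is automatically a skyscraper at the $p_i$, because away from those points the fibres have dimension $\le n-2$. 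The hypothesis on $\delta_m$ then feeds directly into the Theorem of Formal Functions for $R^{n-1}\psi_*\cE$ (via an inverse-system argument with Mittag--Leffler), yielding the desired surjectivity. The missing idea in your proposal is precisely this passage to the Serre dual, which is what makes the Leray spectral sequence collapse and localises the problem at the contracted points.
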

Observe that injectivity holds as soon as $a$ is large enough since $H^0(\cF(-D_a))=0$. Hence, $\cL$-liftability is a property concerning the surjectivity of that map. We thus have to study the injectivity of the map $H^1(\cF(-D_a))\to H^1(\cF)$, which is equivalent, by Serre duality, to the surjectivity of $H^{n-1}(\cE)\to H^{n-1}(\cE\otimes \cL^a)$, where we denote $\cE=\cF^*\otimes\omega_X$. We first compute $H^{n-1}(\cE\otimes \cL^a)$. 

\begin{lemma}
	\label{LEM:LERAY} For $a$ large enough $H^{n-1}(\cE\otimes \cL^a)\simeq H^0(R^{n-1}\psi_*\cE).$
	Moreover, $\cG=R^{n-1}\psi_*\cE$ is a sum of skyscraper sheaves $\cG_i$ supported on the images $p_i$ of the divisors $E_i$ contracted to points by $\psi$.
\end{lemma}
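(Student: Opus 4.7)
The proof splits naturally into two parts corresponding to the two assertions: identifying the support of $\cG = R^{n-1}\psi_*\cE$, and then pinning down $H^{n-1}(\cE \otimes \cL^a)$ via the Leray spectral sequence.

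First I would show that $\cG$ is a skyscraper. The key tool is the standard bound
\[
\Supp R^{q}\psi_* \cE \,\subset\, \{y \in Y : \dim \psi^{-1}(y) \geq q\},
\]
which follows from the theorem on formal functions (\cite{HAG}, III.11). Since $\psi$ is birational, the generic fiber is a point and the exceptional locus has dimension at most $n-1$. If the locus $\{y : \dim \psi^{-1}(y)\ge n-1\}$ were positive-dimensional, its preimage in $X$ would have dimension $\ge n$ and be strictly contained in $X$ (as $\psi$ is an isomorphism on a dense open), which is impossible. So this locus is finite; it is precisely the set of points $p_i$ whose fibers $E_i$ are divisors contracted by $\psi$. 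Thus $\cG$ decomposes as a finite direct sum of skyscraper sheaves $\cG_i$ supported at the $p_i$.

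Next I would compute $H^{n-1}(\cE \otimes \cL^a)$ using the Leray spectral sequence
\[
E_2^{p,q} = H^p\bigl(Y, R^q\psi_*(\cE \otimes \cL^a)\bigr) \;\Longrightarrow\; H^{p+q}(X, \cE \otimes \cL^a).
\]
Writing $a = kd + r$ with $0 \le r < d$ and using $\cL^{d} = \psi^*\cO_Y(1)$, the projection formula gives
\[
R^q\psi_*(\cE \otimes \cL^a) \;=\; R^q\psi_*(\cE \otimes \cL^{r}) \otimes \cO_Y(k).
\]
Since $\cO_Y(1)$ is ample on $Y \subset \mathbb{P}^N$, Serre vanishing on $Y$ kills all $E_2^{p,q}$ with $p > 0$ once $k$ is large enough, uniformly in $r \in \{0,\dots,d-1\}$. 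The spectral sequence therefore degenerates and yields
\[
H^{n-1}(X, \cE \otimes \cL^a) \;\simeq\; H^0\bigl(Y, R^{n-1}\psi_*(\cE \otimes \cL^{r}) \otimes \cO_Y(k)\bigr).
\]

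Finally, the fiber-dimension argument of Part 1 applies verbatim to $\cE \otimes \cL^r$ (a line-bundle twist does not change the exceptional set), so $R^{n-1}\psi_*(\cE \otimes \cL^{r})$ is again a sum of skyscrapers at the $p_i$. Since tensoring a skyscraper by a line bundle does not alter its global sections (on each stalk one takes a free rank-one module and tensors over $\cO_{Y,p_i}$ with the skyscraper), we get
\[
H^0\bigl(Y, R^{n-1}\psi_*(\cE \otimes \cL^{r}) \otimes \cO_Y(k)\bigr) \;\simeq\; H^0\bigl(Y, R^{n-1}\psi_*(\cE \otimes \cL^{r})\bigr).
\]
Moreover, by the theorem on formal functions each stalk at $p_i$ is the inverse limit $\varprojlim H^{n-1}(E_i^{(m)}, (\cE \otimes \cL^r)|_{E_i^{(m)}})$ on infinitesimal neighborhoods, and since $\cL^d|_{E_i^{(m)}}$ comes from the local ring of $Y$ at $p_i$, an easy identification shows this stalk is canonically $(R^{n-1}\psi_*\cE)_{p_i}$ up to the choice of a local trivialization. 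Assembling gives $H^{n-1}(\cE \otimes \cL^a) \simeq H^0(\cG)$ for all $a$ large.

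The main obstacle I foresee is the bookkeeping for $a$ not divisible by $d$: one must check that the $\cL^{r}$ factors do not alter either the support of $R^{n-1}\psi_*$ or (up to isomorphism) its global sections. Once the skyscraper nature of $\cG$ is established, everything else is a routine combination of projection formula plus Serre vanishing.
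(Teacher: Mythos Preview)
Your proposal is correct and follows essentially the same route as the paper: projection formula to factor out $\psi^*\cO_Y(k)$, Serre vanishing to kill the $p>0$ rows of the Leray spectral sequence, and a fiber-dimension argument to pin down the support of $R^{n-1}\psi_*$. The only cosmetic difference is in the last step, where the paper strips the $\cL^a$ twist in one line by noting $\cL^a|_{E_i}=\cO_{E_i}$ (hence $R^{n-1}\psi_*(\cE\otimes\cL^a)=R^{n-1}\psi_*\cE$ directly), whereas you reach the same conclusion via the skyscraper observation plus formal functions.
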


\begin{proof} 
Recall that $\cL=\psi^*(\cL')$ with $\cL'$ ample as observed in Remark \ref{REM:SEMIAMPLE}. Applying the projection formula, we have
$$R^p\psi_*(\cE\otimes \cL^a)=R^p\psi_*(\cE\otimes \psi^*(\cL')^a)=R^p\psi_*(\cE)\otimes (\cL')^a.$$
As $a$ is large enough we can apply Serre criterion to obtain 
$$H^i(R^p\psi_*(\cE\otimes \cL^a))=H^i(R^p\psi_*(\cE)\otimes (\cL')^a)=0,$$
for $i>0$ and any $p$. Then, all terms in the Leray spectral sequence are zero except for $H^0(R^{n-1}\psi_*(\cE\otimes \cL^a))$ that is mapped to zero by the differential. So we get the isomorphism
$$H^{n-1}(\cE\otimes\cL^a)\simeq H^0(R^{n-1}\psi_*(\cE\otimes \cL^a)).$$
Consider the sheaf $\cG=R^{n-1}\psi_*(\cE\otimes \cL^a)$. If $y\in Y$ is different from $p_i$ for all $i$, then the fiber of $\psi$ over $y$ is a subvariety of $X$ of codimension at least $2$. Hence, the stalk of $\cG$ in $y$ is $0$. This proves also that $\cG$ is a sum of skyscraper sheaves $\cG_i$ with support on the points $p_i$. But now $\cL|_{E_i}=(\psi^*\cL')|_{E_i}=\OO_{E_i}$, so $\cG=R^{n-1}\psi_*(\cE\otimes \cL^a)=R^{n-1}\psi_*\cE$.
\end{proof}

Observe now that for $a\geq d_0$, for all $D_a\in |\cL^{a}|$ we have $\cF(mE)_{|D_a}\simeq \cF_{|D_a}$. Indeed, every $D_a$ is the inverse image of an hyperplane and so the general $D_a$ is disjoint from $E$. Therefore, $\cO_{D_a}(mE)\simeq \cO_{D_a}$. Notice that we can conclude the same for all $D_a$ by Seesaw Theorem.
Thus, using the map $\OO_{X}(-mE)\hookrightarrow \OO_X$, the short exact sequence \eqref{sxs:Extendable} and its twist by $\cO_{X}(mE)$ for $m\geq 0$ fit into the commutative diagram
\begin{equation}
\label{DIAG:SHEAVES}
\xymatrix{
	0 \ar[r] & 
	\mathcal{F}\otimes \cL^{-a} \ar[r]\ar@{^{(}->}[d] &
	\cF \ar[r]^-{\rho_a}\ar@{^{(}->}[d]&  \cF|_{D_a}\ar[d]^-{\simeq} \ar[r]
	& 0 \\
	0 \ar[r] &
	\mathcal{F}(mE)\otimes \cL^{-a} \ar[r]&
	\cF(mE) \ar[r] &
	\cF|_{D_a} \ar[r]&
	0 
}
\end{equation}
that induces in cohomology 
\begin{small}
\begin{equation}
\label{DIAG:DEFRHOTAU}
\xymatrix@C=15pt{
	H^0(\cF\otimes \cL^{-a}) \ar@{^{(}->}[r]\ar@{^{(}->}[d] &
	H^0(\cF) \ar[r]^-{\rho_a} \ar@{^{(}->}[d]_-{\tau_m} & 
	H^0(\cF|_{D_a}) \ar[r] \ar[d]^{\simeq} & 
	H^1(\cF\otimes \cL^{-a}) \ar[r]^-{\eta_a} & 
	H^1(\cF) \\
	H^0(\cF(mE)\otimes \cL^{-a}) \ar@{^{(}->}[r] & 
	H^0(\cF(mE)) \ar[r]^-{\rho_a'} & 
	H^0(\cF|_{D_a}). \\
}
\end{equation}
\end{small}
Let $\tau_m^\ast$ and $\eta_a^*$ be the maps obtained from $\tau_m$ and $\eta_a$ by Serre duality, respectively. Then, $\tau_m^\ast$ fits into the following exact sequence induced by $\OO_{X}(-mE)\hookrightarrow \OO_X$ twisted by $\cE$
\begin{equation}\label{es:delta}\cdots \to H^{n-1}(\cE)\xrightarrow{\delta_m} H^{n-1}(\cE|_{mE})\to H^n(\cE(-mE))\xrightarrow{\tau_m^*} H^n(\cE)\to \cdots.\end{equation}

We summarize two results which stem from the above discussion in the following Lemma.

\begin{lemma}
	\label{LEM:SurjectiveReduction}
The following equivalent conditions hold:
	\begin{enumerate}
		\item the surjectivity of $\rho_a$ is equivalent to the surjectivity of $\eta_a^*:H^{n-1}(\cE)\to H^{n-1}(\cE\otimes \cL^{a});$\smallskip
		\item the surjectivity of $\tau_m$ is equivalent to the surjectivity of $\delta_m:H^{n-1}(\cE)\to H^{n-1}(\cE|_{mE}).$
	\end{enumerate}
\end{lemma}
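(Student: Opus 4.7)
The plan is to deduce both equivalences from a brief chase of long exact sequences, combined with Serre duality on the smooth projective variety $X$.

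For (1), the top row of diagram \eqref{DIAG:DEFRHOTAU} is (part of) the long exact cohomology sequence of \eqref{sxs:Extendable}. Reading off that sequence, $\rho_a$ is surjective if and only if the coboundary $H^0(\cF|_{D_a})\to H^1(\cF\otimes\cL^{-a})$ vanishes, which, by exactness, is equivalent to the injectivity of $\eta_a\colon H^1(\cF\otimes\cL^{-a})\to H^1(\cF)$. The inclusion $\cF\otimes\cL^{-a}\hookrightarrow\cF$ inducing $\eta_a$ dualizes, after twisting by $\omega_X$, to the morphism $\cE\to\cE\otimes\cL^{a}$ that induces $\eta_a^*\colon H^{n-1}(\cE)\to H^{n-1}(\cE\otimes\cL^{a})$. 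Since Serre duality interchanges injectivity and surjectivity of linear maps between finite-dimensional vector spaces, $\eta_a$ is injective precisely when $\eta_a^*$ is surjective, proving (1).

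For (2), extract from \eqref{sxs:contractedDivisor} the long exact sequence
\[
H^0(\cF)\xrightarrow{\tau_m}H^0(\cF(mE))\to H^0(\cF(mE)|_{mE})\to H^1(\cF)\to\cdots .
\]
Thus $\tau_m$ is surjective if and only if the following coboundary $H^0(\cF(mE)|_{mE})\to H^1(\cF)$ is injective. Serre duality identifies the Serre dual of \eqref{sxs:contractedDivisor} with the short exact sequence $0\to\cE(-mE)\to\cE\to\cE|_{mE}\to 0$ underlying \eqref{es:delta}, and sends the above coboundary to the map $H^{n-1}(\cE|_{mE})\to H^n(\cE(-mE))$ while sending $\tau_m$ to $\tau_m^*$. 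Injectivity of $H^{n-1}(\cE|_{mE})\to H^n(\cE(-mE))$ is, by the exactness of \eqref{es:delta}, equivalent to the surjectivity of $\delta_m\colon H^{n-1}(\cE)\to H^{n-1}(\cE|_{mE})$, which gives (2).

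The only delicate point --- and the most likely source of a hidden obstacle --- is naturality: one must check that Serre duality carries each of the short exact sequences \eqref{sxs:Extendable} and \eqref{sxs:contractedDivisor} to the corresponding dual short exact sequence used to define $\eta_a^*$, $\delta_m$, and $\tau_m^*$, so that the maps named in the statement really are the Serre duals of those appearing in the cohomology sequences above. This follows from the functoriality of Grothendieck--Serre duality applied to the twists of $\cO_X(-D_a)\hookrightarrow\cO_X$ by $\cF$ and of $\cO_X(-mE)\hookrightarrow\cO_X$ by $\cE$, and once granted, both equivalences follow immediately.
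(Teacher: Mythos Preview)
Your argument for (1) is correct and is exactly the reasoning the paper gives in the paragraph preceding Lemma~\ref{LEM:LERAY}.

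Your argument for (2), however, has two slips that happen to cancel in the conclusion but are each incorrect. First, when you dualize the long exact sequence of \eqref{sxs:contractedDivisor} via Serre duality, coboundaries and restriction maps get \emph{interchanged}: the Serre dual of the coboundary $\partial\colon H^0(\cF(mE)|_{mE})\to H^1(\cF)$ is the restriction map $\delta_m\colon H^{n-1}(\cE)\to H^{n-1}(\cE|_{mE})$, not the coboundary $H^{n-1}(\cE|_{mE})\to H^n(\cE(-mE))$ of the dual sequence. Second, your claim that injectivity of $H^{n-1}(\cE|_{mE})\to H^n(\cE(-mE))$ is equivalent to surjectivity of $\delta_m$ is false: exactness of \eqref{es:delta} gives $\operatorname{Im}(\delta_m)=\operatorname{Ker}(\partial')$, so $\delta_m$ is surjective iff $\partial'=0$, whereas $\partial'$ injective means $\operatorname{Ker}(\partial')=0$, i.e.\ $\delta_m=0$.

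The fix is immediate once the first slip is corrected: since the Serre dual of $\partial$ is $\delta_m$, injectivity of $\partial$ is equivalent to surjectivity of $\delta_m$, and you are done. Alternatively, and this is the route the paper indicates by displaying \eqref{es:delta}, you can avoid duality on the possibly non-reduced scheme $mE$ altogether: apply Serre duality on $X$ directly to $\tau_m$ to get that $\tau_m$ is surjective iff $\tau_m^*\colon H^n(\cE(-mE))\to H^n(\cE)$ is injective, and then read off from the exactness of \eqref{es:delta} that this is equivalent to $\delta_m$ being surjective.
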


\begin{proof}[Proof of Theorem \ref{thm:doubled}]   Assume first that $\rho_a$ is surjective, for $a>>0$. We prove that $\tau_m$ is surjective, for any $m>0$. By Diagram \eqref{DIAG:DEFRHOTAU}, 
$$
\xymatrix{
	& H^0(\cF) \ar[d]_-{\tau_m} \ar[r]^-{\rho_a}& H^0(\cF|_{D_a}) \ar[d]^-{\simeq } \\
	H^0(\cF(mE)\otimes \cL^{-a})\ar@{^{(}->}[r] &
	H^0(\cF(mE))\ar[r]^-{\rho_a'} & 
	H^0(\cF|_{D_a}).
}
$$
As $\rho_a$ is surjective,  also $\rho_a'\circ\tau_m$ is surjective. But now the kernel of $\rho_a'$ is $H^0(\cF(mE)\otimes \cL^{-a})$ and it  is trivial for $a$ large enough. Hence $\tau_m$ is surjective. 
\smallskip
	
Let us now prove the other implication. Assume that $\tau_m$ is surjective, for any $m>0$. We prove that $\rho_a$ is surjective, for $a>>0$. Equivalently, by Lemma \ref{LEM:SurjectiveReduction}, it is enough to prove that $\eta_a^*:H^{n-1}(\cE)\to H^{n-1}(\cE\otimes \cL^{a})$ is surjective. By Lemma \ref{LEM:LERAY} we can write $H^{n-1}(\cE\otimes \cL^{a})=H^0(R^{n-1}\psi_*\cE)=\bigoplus_i H^0((R^{n-1}\psi_*\cE)_{p_i})$ because $(R^{n-1}\psi_*\cE)_{p_i}$ is skyscraper supported on the points $p_i$ images of contracted divisors and so the map $\eta_a^*$ from Lemma \ref{LEM:SurjectiveReduction} up to isomorphism is  
\begin{equation}
\label{map:formulalimit}
\eta_a^*:H^{n-1}(\cE)\to \bigoplus_i H^0((R^{n-1}\psi_*\cE)_{p_i}).
\end{equation}
To prove the surjectivity of \eqref{map:formulalimit} we use the machinary of inverse limits and the Theorem of formal functions (see \cite{GRO}[(4.1.5),(4.2.1)] and \cite{HAG}[III, Theorem 11.1]). This Theorem gives the isomorphisms
$$(R^{n-1}\psi_*\cE)_{p_i}\hspace{-3mm}\sphat \,\simeq \varprojlim H^{n-1}(\cE|_{kE_i})$$
and defines then a map 
\begin{equation}
\label{EQ:WIDEETA}
\widehat{\eta}_a^*:H^{n-1}(\cE)\to \bigoplus_i \varprojlim H^{n-1}(\cE|_{kE_i}).
\end{equation}
We will prove that $\widehat{\eta}_a^*$ is surjective and that this implies that $\eta_a^*$ is surjective.
\smallskip 

To be more explicit consider the following two inverse systems of $\rm k$-vector spaces $\fB$ and $\fC$. The first one, $\fB=(B_k)_{k\in \mathbb{N}}$, is the constant inverse system with $B_k=H^{n-1}(\cE)$ and maps $b_k=\id: B_k\to B_{k-1}$, for all $k$. In order to define the second one, $\fC=(C_k)_{k\in \mathbb{N}}$, we set  $C_k=H^{n-1}(\cE|_{kE})$ and we construct the maps $c_k:C_k\to C_{k-1}$ as follows. Consider for any $k\geq 1$ the commutative diagram
\begin{equation}
\label{DIAG:INVSYST}
\xymatrix{
	0 \ar[r]& 
	\cE(-kE) \ar[r]\ar@{^{(}->}[d] &
	\cE \ar[r] \ar[d]_{\id}&
	\cE|_{kE} \ar[r]\ar[d]^-{c'_k} &
	0\\
	0 \ar[r]& 
	\cE(-(k-1)E) \ar[r] \ar@{->>}[d]&
	\cE \ar[r] &
	\cE|_{(k-1)E} \ar[r] &
	0\\
	&
	\cE(-(k-1)E)|_{E}
}
\end{equation}
where the two rows are defined by $\OO_X(-lE)\subset \OO_X$ twisted with $\cE$ for $l=k$ and $l=k-1$ respectively, the first column is given by $\cO_X(-E)\subset \cO_X$, the second column by the identity and the map $c'_k$ in the third column is that one letting the diagram commutative. As the vertical arrow in the first column is an injection and that one in the second column is the identity, 	$c'_k$ is surjective and its kernel is isomorphic to $\cE(-(k-1)E)|_{E}$. Hence, the last column induces the exact sequence
\begin{equation}
\label{EQ:DEFPSIK}
\cdots\to H^{n-1}(\cE(-(k-1)E)|_{E})\to H^{n-1}(\cE|_{kE})\xrightarrow{c_k} H^{n-1}(\cE|_{(k-1)E})\to 0    
\end{equation}
where the last $0$ follows as $\cE(-(k-1)E)|_{E}$ is supported on a divisor. So the map $c_k$ are epimorphisms.

Now we want to prove that there is a surjection between the limits of the inverse systems $\fB$ and $\fC$ induced by the morphism $\delta=(\delta_m)_{m\in \bN}:\fB\to \fC$ where
\begin{equation}\delta_m:H^{n-1}(\cE)\to H^{n-1}(\cE|_{mE})\end{equation} is the morphism in Lemma \ref{LEM:SurjectiveReduction}. These are surjective for $m\geq 0$ by Lemma \ref{LEM:SurjectiveReduction}, since by assumption 
$\tau_m:H^0(\cF)\to H^0(\cF(mE))$ is surjective for $m\geq 0$. In particular, we have a surjective map between the limits
\begin{equation}
\label{EQ:SURJ}
\xymatrix{
	H^{n-1}(\cE)=\varprojlim\fB \ar@{->>}[r] & \varprojlim\fC = \varprojlim H^{n-1}(\cE|_{kE}).
}
\end{equation}

% Since $\delta_m$ is surjective only for $m$ large enough, $\delta$ is not a surjective map of inverse system as it is. To avoid this issue, we define an inverse subsystem $\fD=\im(\delta)=(D_k)_{k\in \mathbb{N}}$ of $\fC$ in the following natural way. Let $\fA=\ker(\delta)=(A_k)_{k\in \mathbb{N}}$ so that we have an exact diagram of inverse systems of vector spaces
% $$
% \xymatrix{
% 	0 \ar[r]& \fA \ar[r] &\fB  \ar[r]\ar[rd]_{\delta} & \fD \ar[r]\ar@{^{(}->}[d]_{\iota} & 0\\
% 	& & & \fC 
% }
% $$

% As $D_k\to C_k$ are isomorphisms for $k$ large enough, we obtain that the limits $\varprojlim\fD$ and $\varprojlim\fC$ are isomorphic. The inverse system $\fA$ is an inverse system of finite dimensional vector spaces and so it satisfies the Mittag-Leffler condition. We can thus apply \cite{HAG}[II, Prop 9.1(b)] to conclude that
% $$
% \xymatrix{
% 	H^{n-1}(\cE)=\varprojlim\fB \ar@{->>}[r] & \varprojlim\fC = \varprojlim H^{n-1}(\cE|_{kE})
% }$$
% is surjective. 

Observe that $\varprojlim H^{n-1}(\cE|_{kE})=\bigoplus_{i}\varprojlim H^{n-1}(\cE|_{kE_i})$. By \cite{HAG}[III, Proposition 8.5 and Theorem 11.1], the vector space $H^{n-1}(\cE|_{kE_i})$ has a natural structure of $\OO_{Y,p_i}$-module, which is compatible with the structure of $\rm k$-vector space. 
Thus we can apply the Theorem of formal functions (see \cite{GRO}[(4.1.5),(4.2.1)] and \cite{HAG}[III, Theorem 11.1]) to $R^{n-1}\psi_*\cE$ to conclude that 
\begin{equation}
\label{EQ:FORMALFUNC}
(R^{n-1}\psi_*\cE)_{p_i}{\hspace{-3mm}\sphat}\,\,=\,\varprojlim H^{n-1}(\cE|_{kE_i})=\varprojlim\fC.
\end{equation}

Using Equations \eqref{EQ:SURJ} and \eqref{EQ:FORMALFUNC}, one can define the morphism 
$$\widehat{\eta}_a^*:H^{n-1}(\cE)\to \bigoplus_i (R^{n-1}\psi_*\cE)_{p_i}{\hspace{-3mm}\sphat}$$
in \eqref{EQ:WIDEETA}, which is surjective by construction.

% Then, we have a surjection
% $$\widehat{\eta}_a^*:H^{n-1}(\cE) \to \bigoplus_i (R^{n-1}\psi_*\cE)_{p_i}{\hspace{-3mm}\sphat} \, =\,\bigoplus_i (R^{n-1}\psi_*\cE)_{p_i}\otimes \widehat{\OO_{p_i}}=\bigoplus_i (R^{n-1}\psi_*\cE)_{p_i}\otimes \OO_{p_i}=\bigoplus_i (R^{n-1}\psi_*\cE)_{p_i}$$
% where the second equality follows from the isomorphism $\widehat{\OO_{p_i}}\simeq \OO_{p_i}$ which holds since $\OO_{p_i}$ is Artinian. 

% Hence we can then conclude that the above surjectivity yields the surjectivity of
% $$\eta_a^*:H^{n-1}(\cE) \to \bigoplus_i H^0((R^{n-1}\psi_*\cE)_{p_i})$$
% as wanted.

In order to conclude that
$$\eta_a^*:H^{n-1}(\cE) \to \bigoplus_i H^0((R^{n-1}\psi_*\cE)_{p_i})$$
is surjective, notice that $(R^{n-1}\psi_*\cE)_{p_i}{\hspace{-3mm}\sphat}\,\,$ is naturally isomorphic to $H^0((R^{n-1}\psi_*\cE)_{p_i})$. Indeed, one has
$$(R^{n-1}\psi_*\cE)_{p_i}{\hspace{-3mm}\sphat} \, =\, (R^{n-1}\psi_*\cE)_{p_i}\otimes \widehat{\OO_{p_i}}= (R^{n-1}\psi_*\cE)_{p_i}\otimes \OO_{p_i}= (R^{n-1}\psi_*\cE)_{p_i}= H^0((R^{n-1}\psi_*\cE)_{p_i})$$
where the second equality follows from the isomorphism $\widehat{\OO_{p_i}}\simeq \OO_{p_i}$ (which holds since $\OO_{p_i}$ is Artinian) and the last equality follows since the sheaf is supported on the point.
\end{proof}

\subsection{A concavity result}

~\\ 
From now on we assume $\rm k=\bC$. We will prove a result, which is of analytic kind, by applying the algebraic results stated in Theorem \ref{thm:doubled}. Let $\cL$ be a line bundle on a smooth projective variety $X$ of dimension $n$. Motivated by the classical notion of concavity (see \cite{Andr,AG,Som1}), we give the following definition.

\begin{definition}\label{def:concavity}
We say that a locally free sheaf $\cF$ on $X$ is {\bf $\cL$-concave} if for any $D\in |\cL^a|$ with $a\geq 1$ and any open connected neighborhood  $U$ of $D$, with respect to the analytical topology, the restriction map gives the equality $H^0(\cF)=H^0(\cF|_U)$.
\end{definition}

\begin{theorem}
\label{THM:EXTENSIONFROMOPENS} Let $\cL$ be a big and semiample line bundle. Then, a locally free sheaf $\cF$ is $\cL$-liftable if and only if $\cF$ is $\cL$-concave.
\end{theorem}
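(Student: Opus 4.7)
The plan is to apply Theorem \ref{thm:doubled} in order to translate $\cL$-liftability into the surjectivity of $\tau_m:H^0(\cF)\to H^0(\cF(mE))$ for every $m\geq 0$, and then to invoke the analytic identity principle in two complementary forms: on a connected complex manifold a holomorphic section of a locally free sheaf is uniquely determined by its restriction to any nonempty open set, and a section that vanishes to infinite order along a Cartier divisor vanishes on the containing connected component. I will use throughout that $X$ is irreducible, so that both $X$ and $X\setminus E$ are connected.

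For the implication $\cL$-concave $\Rightarrow$ $\cL$-liftable I would fix $m\geq 0$ and $s\in H^0(\cF(mE))$ and construct a preimage of $s$ under $\tau_m$. Pick a general hyperplane $H\subset\PP^N$ missing the finitely many images $p_i$ of the connected components of $E$, so that $D:=\psi^{-1}(H)\in|\cL^d|$ is disjoint from $E$. Choose a small connected analytic neighborhood $U$ of $D$ contained in $X\setminus E$; on $U$ the sheaves $\cF$ and $\cF(mE)$ are canonically identified, so $s|_U\in H^0(\cF|_U)$. By $\cL$-concavity applied to $D$ and $U$, $s|_U$ extends to a global section $\tilde s\in H^0(\cF)$. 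Viewed inside $H^0(\cF(mE))$, $\tau_m(\tilde s)$ and $s$ agree on the nonempty open $U$ of the connected $X$ and therefore coincide on $X$ by analytic continuation; this gives $\tau_m(\tilde s)=s$, and Theorem \ref{thm:doubled} completes the direction.

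For the converse $\cL$-liftable $\Rightarrow$ $\cL$-concave, let $s\in H^0(\cF|_U)$ with $U$ a connected open neighborhood of $D\in|\cL^a|$. For every $k\geq 1$ the scheme-theoretic thickening $kD\in|\cL^{ka}|$ is topologically supported in $|D|\subset U$, so $s$ restricts to $s_k:=s|_{kD}\in H^0(\cF|_{kD})$ (the analytic and algebraic restrictions to the projective nilpotent thickening agree by GAGA). For $k$ large enough, $\cL$-liftability furnishes a unique $\tilde s_k\in H^0(\cF)$ with $\tilde s_k|_{kD}=s_k$; uniqueness together with the compatibility $s_{k+1}|_{kD}=s_k$ forces $\tilde s_{k+1}=\tilde s_k$ once $k$ is above the liftability threshold, so the $\tilde s_k$ stabilize to a single $\tilde s\in H^0(\cF)$ satisfying $\tilde s|_{kD}=s|_{kD}$ for every $k\geq 1$. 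Therefore on $U$ the difference $\tilde s-s$ is divisible by every power of a local equation of $D$, and Krull's intersection theorem applied in the (noetherian) analytic local ring at each point of $D$ shows it vanishes on a neighborhood of $D$ and hence on all of $U$ by connectedness.

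The step that I expect to be the main obstacle is the compatibility in the converse direction between the analytic datum $s|_{kD}$ and the algebraic $\cL$-liftability hypothesis: one must make sure that restricting an analytic section on $U$ to the algebraic nilpotent scheme $kD\subset X$ yields an algebraic section of $\cF|_{kD}$, so that liftability applies, and that the resulting algebraic $\tilde s$ is compatible with the analytic $s$ to every order along $D$. Once this GAGA-type compatibility is granted, the proof is reduced to the two applications of the identity principle described above.
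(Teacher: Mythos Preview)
Your proof is correct and proceeds along the same lines as the paper's. The only superficial difference is in the direction concave $\Rightarrow$ liftable: the paper argues by contrapositive, taking a section $\alpha\in H^0(\cF(mE))\setminus H^0(\cF)$ supplied by Theorem~\ref{thm:doubled} and observing that its restriction to the neighborhood $U=X\setminus E$ of a $D\in|\cL^d|$ disjoint from $E$ cannot extend to $H^0(\cF)$ --- this is exactly your argument read backwards, with the simplest possible choice of $U$.
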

\begin{proof}
Assume first that $\cF$ is $\cL$-liftable. Take $D\in |\cL^a|$ and let $U$ be an open connected neighborhood of $D$ with respect to the analytical topology.
Since $\cF$ is $\cL$-liftable, there is $m_0\in \mathbb{N}$ such that the restriction map induces the isomorphism $H^0(\cF)\simeq H^0(\cF|_{mD})$ for all $m\geq m_0$. In particular, the restriction map induces isomorphisms $H^0(\cF|_{m_0D})\simeq H^0(\cF|_{(m_0+k)D})$ for $k\geq 0$.  
We have to show that the restriction $H^0(\cF)\to H^0(\cF|_U)$ is surjective. Fix a section $\omega\in H^0(\cF|_U)$ and let $\omega_m\in H^0(\cF|_{mD})$ be its restriction.
If $\alpha\in H^0(\cF)$ is the lift of $ \omega_{m_0}$, we have that, for all $m\geq m_0$, $\alpha|_U-\omega\in H^0(\cF|_U)$ restricts to zero in $H^0(\cF|_{mD})$. 
Thus, the series expansion of $\alpha|_U-\omega$ in a local coordinate neighborhood of the generic point of $U$ vanishes. Therefore since $U$ is connected $\alpha|_U-\omega=0$.

Now assume that $\cF$ is not $\cL$-liftable. Since $\cL$ is big and semiample, by Theorem \ref{thm:doubled} there is a non zero effective divisor $E$ contracted by $\varphi_{|\cL^d|}$ for $d>>0$, and a section $\alpha \in H^0(\cF(mE))\setminus H^0(\cF)$ for some integer $m>0$.  Set $U=X\setminus E$ and  $D\in |\cL^d|$ such that $D\cap E=\emptyset$. Then $U$ is an open connected neighborhood of $D$. By construction the restriction $\alpha|_U$ defines a section of $H^0(\cF|_U)$ that cannot be extended to $H^0(\cF)$.
\end{proof}

%%%%%%%%%%%%%%%%%%%%%%%%%%%%%%%%%%%%%%%%%%%%%%%%%%%%%%%%%%%%%%%%%%%%%%%%%%%%%
%%%%%%%%%%%%%%%%%%%%%%%%%%%%%%%%%%%%%%%%%%%%%%%%%%%%%%%%%%%%%%%%%%%%%%%%%%%%%
%%%%%%%%%%%%%%%%%%%%%%%%%%%%%%%%%%%%%%%%%%%%%%%%%%%%%%%%%%%%%%%%%%%%%%%%%%%%%

%%%%%%%%%%%%%%%%%%%%%%%%%%%%%%%%%%%%%%%%%%%%%%%%%%%%%%%%%%%%%%%%%%%%%%%%%%%%%
%%%%%%%%%%%%%%%%%%%%%%%%%%%%%%%%%%%%%%%%%%%%%%%%%%%%%%%%%%%%%%%%%%%%%%%%%%%%%
%%%%%%%%%%%%%%%%%%%%%%%%%%%%%%%%%%%%%%%%%%%%%%%%%%%%%%%%%%%%%%%%%%%%%%%%%%%%%
%%%%%%%%%%%%%%%%%%%%%%%%%%%%%%%%%%%%%%%%%%%%%%%%%%%%%%%%%
%%%%%%%%%%%%%%%%%%%%%%%%%%%%%%%%%%%%%%%%%%%%%%%%%%%%%%%%%

\section{Examples in dimension two}
\label{SEC:EXAMPLES}

In this section we analyse Theorem \ref{thm:doubled} when the variety is a surface $S$ over an algebraically closed field of characteristic $0$ and $\cF=\Omega_S^1$ is its cotangent bundle. Theorem \ref{thm:doubled}, in this framework, can be restated here as follows:
\begin{center}
\textit{``Let $S$ be as surface and let $\cL$, and $E$ be as in Theorem \ref{thm:doubled}. \\
Then $\Omega_S^1$ is not $\cL$-liftable if and only if there exists $m>0$ such that  $h^0(\Omega^1_S{(mE)})>h^0(\Omega^1_S)$."}
\end{center}
We will  give examples of surfaces $S$ for which $\Omega_S^1$ is $\cL$-liftable and cases for which it is not.

\subsection{The non liftable case: projective bundles over curves and coverings}
\label{SUBSEC:PROJBUNDLES}
~\\ 
We will use well known results about projective bundles. The reader can refer to \cite[Chapter V.2]{HAG} for details. Let $B$ be a smooth projective curve of genus $g\geq 2$. We fix a globally generated line bundle $M$ of degree $d>0$ with $h^1(M)>0$. Then one has $0<d\leq 2g-2$ and $d=2g-2$ if and only if $M=\omega_B$.
We can consider the vector bundle $\V=\OO_B\oplus M^{-1}$ on $B$. It is the only decomposable normalized\footnote{Recall that a vector bundle $\mathcal{E}$ on a curve $B$ is normalized if $h^0(\mathcal{E})\neq 0$ and, for all line bundle $M$ of negative degree, $h^0(\mathcal{E}\otimes M)=0$.} vector bundle of rank $2$ on $B$ such that $c_1(\V)=M^{-1}$. Let $S=\PP(\V)\xrightarrow{f} B$ and consider the section $\sigma:B\to S$ induced by $\V\twoheadrightarrow M^{-1}$. Its image is an effective curve $E$ which is isomorphic to $B$ via $\sigma$. Moreover, by construction, if $\cN$ is a line bundles on $B$, then $f^*(\cN)|_E$ corresponds to $\cN$ via $\sigma$. In particular, we have
$$\cO_E(E)=f^*(M^{-1})|_E$$ and so
$E^2=-d$. Set $\cL=\OO_S(E)\otimes f^*M$ and take $H\in|\cL|$. Notice that, by construction, $\cO_E(H)=\cO_E$ so $H\cdot E=0$.

\begin{proposition}
The line bundle $\cL$ is big and $|\cL|$ is base point free. Moreover, $E$ is contracted by $\varphi_{|\cL|}$ and $\Omega_S^1$ is not $\cL$-liftable.
\end{proposition}

\begin{proof}
Let $p\in S$. Since $|M|$ is base point free by assumption and $E+|f^*M|$ is a subsystem of $|\cL|$, we have that if $p$ is a base point of $|\cL|$, then necessarily $p\in E$. On the other hand, by projection formula
we have 
$$f_*(\cO_S(E))=\cV\qquad f_*(\cO_S(H-E))=M\qquad f_*(\cO_S(H))=\cV\otimes M$$
so 
$$H^0(\cO_S(H-E))=H^0(f^*M)\simeq H^0(M)\qquad H^0(\cO_S(H))\simeq H^0(\cV\otimes M)\simeq H^0(\cO_B)\oplus H^0(M).$$
Hence, from the exact sequence
\begin{equation}
\label{EXSEQ:HYPER}
0\to H^0(\OO_S(H-E))\to H^0(\OO_S(H))\xrightarrow{\alpha} H^0(\OO_{E}(H))\to H^1(\cO_S(H-E))\to \cdots
\end{equation}
one has that $\alpha$ is surjective since we have shown that $H^0(\cO_S(H-E))$ has codimension $1$ in $H^0(\cO_S(H))$ and $H^0(\cO_E(H))=H^0(\cO_E)$ is $1$-dimensional. This also shows that there exists a section $s$ of $\cL=\OO_S(H)$ that is not zero at any point of $E$. Hence $|\cL|$ is base point free as claimed. This also shows that $H$ is nef and, since $H^2=d>0$, we have that $H$ is big.
As observed before, we have $H\cdot E=0$, so the morphism $\varphi_{|\cL|}$ contracts $E$.

% First of all, we prove that $\cL$ is nef. 
% Let $C\subset S$ be an irreducible effective curve. We need to prove that $E\cdot C\geq 0$. Since $H\cdot E=0$ and $H\cdot f^{-1}(p)=1$ for all $p\in B$ we only need to check the case when $C$ is neither $E$ or a fiber of $f$. In this case $C\equiv a E+f^*L$ with $a\geq 0$ and $L\in\Pic(C)$ with $\deg(L)\geq 0$ so $H\cdot C=a H\cdot E+H\cdot L=\deg(L)\geq 0$. This proves that $H$ is nef. Since $H^2=d$ we have that $H$ is also big.

We have $R^1f_*\OO_S=0$ since the fibers of $f$ are projective lines. Since $S$ is ruled we have that $h^0(\Omega_S^1)=g$. Hence, in order to show that $\Omega_S^1$ is not $\cL$-liftable it is enough to show that $h^0(\Omega_S^1(E))>g$. Consider the relative cotangent sheaf $\Omega_{S/B}^{1}$. It is a line bundle on $S$ such that
$$\Omega^1_{S/B}=\omega_S\otimes f^*\omega_B^{-1}=\OO_S(-2E)\otimes f^*M^{-1}=\OO_S(-H-E).$$
If we twist the cotangent sequence by $E$ we obtain
$$0\to H^0(\OO_S(E)\otimes f^*\omega_B)\to H^0(\Omega_S^1(E))\to H^0(\Omega_{S/B}^1(E))=H^0(\OO_S(-H))=0$$
and so $H^0(\Omega_S^1(E))\simeq H^0(\OO_S(E)\otimes f^*\omega_B)$. Then, as $f_*\OO_S(E)=\cV$, we can write
$$h^0(\Omega_S^1(E))=h^0(\omega_B)+ h^0(\omega_B\otimes M^{-1})=g+h^1(M).$$ 
As $h^1(M)>0$ by assumption, we proved the claim.
\end{proof}

Now, we want to produce other examples for which the liftability property of the cotangent sheaf does not hold. Let $S$ as before and consider a generically finite projective morphism $\pi:\hat{S}\to S$ such that the branch divisor $D$ is smooth and different from $E$. Set $\hat{E}=\pi^*E$, $\hat{H}=\pi^*H$ and $\hat{\cL}=\pi^*\cL$.

\begin{proposition}
\label{PROP:FINITECOVER}
The line bundle $\hat{\cL}$ is big, $|\hat{\cL}|$ is base point free and $\varphi_{|\hat{\cL}|}$ contracts $\hat{E}$. Moreover, $\Omega_{\hat{S}}^1$ is not $\hat{\cL}$-liftable.
\end{proposition}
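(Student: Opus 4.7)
The plan is to apply Theorem \ref{thm:doubled} to $\hat{S}$ with the line bundle $\hat{\cL}$, using the previous Proposition for $S$ as input. Several of the claims are formal pullback arguments: nefness of $\hat{\cL}=\pi^*\cL$ is automatic from nefness of $\cL$, and bigness follows from $\hat{\cL}^2=(\deg\pi)\cdot \cL^2>0$ since $\pi$ is generically finite. Base point freeness of $|\hat{\cL}|$ follows because the pulled-back linear subsystem $\pi^*|\cL|\subseteq |\hat{\cL}|$ is already base point free. Finally, from $H\cdot E=0$ one gets $\cL|_E=\OO_E$, hence $\hat{\cL}|_{\hat{E}}=\pi^{*}(\cL|_E)=\OO_{\hat{E}}$; thus the sections of $\hat{\cL}^a$ restrict to constants on each connected component of $\hat{E}$, so $\varphi_{|\hat{\cL}^a|}$ contracts $\hat{E}$.

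For the non-liftability of $\Omega_{\hat{S}}^1$, by Theorem \ref{thm:doubled} it suffices to produce an integer $m>0$ with $h^0(\Omega_{\hat{S}}^1(m\hat{E}))>h^0(\Omega_{\hat{S}}^1)$. By the previous Proposition, $\Omega_S^1$ is not $\cL$-liftable, so I can fix $m>0$ and a section $\alpha\in H^0(\Omega_S^1(mE))\setminus H^0(\Omega_S^1)$. The candidate new section on $\hat{S}$ is $\pi^*\alpha$: starting from $\alpha\in H^0(\Omega_S^1\otimes \OO_S(mE))$, pull back to $H^0(\pi^*\Omega_S^1\otimes \OO_{\hat{S}}(m\hat{E}))$, and then push through the differential $d\pi\colon \pi^*\Omega_S^1\hookrightarrow \Omega_{\hat{S}}^1$, which is an inclusion because $\pi$ is generically \'etale (so $d\pi$ is generically an isomorphism and hence injective on the torsion-free sheaf $\pi^*\Omega_S^1$). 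This yields an element $\pi^*\alpha\in H^0(\Omega_{\hat{S}}^1(m\hat{E}))$.

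The remaining step, which I regard as the only nontrivial point, is to verify that $\pi^*\alpha$ does not lie in the image of $H^0(\Omega_{\hat{S}}^1)\hookrightarrow H^0(\Omega_{\hat{S}}^1(m\hat{E}))$, i.e.\ that the genuine pole of $\alpha$ along $E$ survives pullback. I would check this locally at the generic point of a component of $\hat{E}$ that dominates $E$. Because the branch divisor $D$ is smooth and different from $E$, $\pi$ is \'etale in a neighborhood of such a point, so writing $\alpha=u\,\omega/z^k$ in local coordinates with $z$ a local equation of $E$ and $u\omega$ regular (with $k\geq 1$ on some component), one obtains $\pi^*\alpha=(u\circ\pi)\cdot d\pi(\omega)/(\pi^*z)^k$ in which $\pi^*z$ is a local equation of $\hat{E}$ up to a unit. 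Hence the pole order of $\pi^*\alpha$ along the chosen component of $\hat{E}$ equals that of $\alpha$ along $E$, which is positive, forcing $\pi^*\alpha\notin H^0(\Omega_{\hat{S}}^1)$. Theorem \ref{thm:doubled} then gives the conclusion.
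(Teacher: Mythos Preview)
Your proposal is correct and follows essentially the same approach as the paper: pull back properties of $\cL$ via $\pi$, and produce a witness to non-liftability by pulling back a section $\alpha\in H^0(\Omega_S^1(mE))\setminus H^0(\Omega_S^1)$. The paper's proof is considerably more terse---it simply asserts that, because the branch divisor is different from $E$, one has $\pi^*\eta\in H^0(\Omega_{\hat{S}}^1(\hat E))\setminus H^0(\Omega_{\hat{S}}^1)$---whereas you have supplied the local computation (\'etaleness of $\pi$ at the generic point of a component of $\hat{E}$ dominating $E$) that justifies this assertion.
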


\begin{proof}
Since $\pi$ is surjective and generically finite, we have that $\hat{\cL}$ is big and that the linear system $|\hat{\cL}|$ is base point free. Since $\pi^*$ commutes with the intersection product we have $0=\pi^*(H\cdot E)=\pi^*(H)\cdot \pi^*(E)=\hat{H}\cdot \hat{E}$ so $\hat{E}$ is contracted by $\varphi_{|\hat{\cL}|}$. As the branch divisor of $\pi$ is different from $E$ we have that if $\eta\in H^0(\Omega_S^1(E))\setminus H^0(\Omega_S^1)$ then $\pi^*\eta\in H^0(\Omega_{\hat{S}}^1(E))\setminus H^0(\Omega_{\hat{S}}^1)$. This proves that $\Omega_{\hat{S}}^1$ is not $\hat{\cL}$-liftable.
\end{proof}

We conclude this subsection by constructing examples of cyclic covering of any $S$ as before. These give elliptic fibrations and surfaces of general type. We will use some results about cyclic coverings which can be found in \cite[Chapter I.17]{BPHV}.

Recall that $\cL$ is base point free. Then, for each $n\geq 1$, there exists a smooth irreducible curve $C_n\in |\cL^n|$ by Bertini's theorem (see \cite[Theorem 3.3.1]{Laz}) and using that $\varphi_{|\cL|}(S)$ has dimension $2$. Hence, for all $n\geq 1$, we can construct a cyclic covering $\pi:\hat{S}\to S$ of degree $n$ with branch $C_n$. Since $C_n$ is smooth and it does not intersect $E$ for all $n$, we can apply Proposition \ref{PROP:FINITECOVER} in order to prove that $\Omega_{\hat{S}}^1$ is not $\hat{\cL}$-liftable.

\begin{proposition}
Consider the cyclic coverings above with branch $C_n$. Then
\begin{enumerate}[(a)]
\item for all $n\geq 1$, $\hat{f}=f\circ \pi$ is a fibration;
\item the general fiber of $\hat{f}$ is smooth of genus $\frac{(n-1)(n-2)}{2}$;
\item $\hat{S}$ is an elliptic fibration for $n=3$ and is canonically polarized for $n\geq 4$.
\end{enumerate}
\end{proposition}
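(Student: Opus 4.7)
The plan is to handle the three parts in order, using intersection theory on the minimal ruled surface $S$ together with Riemann--Hurwitz and the standard canonical formula for cyclic covers.

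For parts (a) and (b) the key input is the intersection number $H\cdot F=(E+f^*M)\cdot F=1$, where $F$ is a fiber of $f$; consequently $C_n\cdot F=n$. For general $b\in B$ the fiber $F_b\cong\PP^1$ meets the smooth curve $C_n$ transversely in $n$ distinct points (by generic smoothness applied to $f|_{C_n}$). Then $\pi^{-1}(F_b)\to F_b$ is a degree-$n$ cyclic cover of $\PP^1$ branched at $n$ distinct points, which is smooth and irreducible (the local equation $y^n=s(x)$, with $s$ a section of $\OO_{\PP^1}(n)$ having $n$ simple zeroes, is not a nontrivial power). Hence the generic fiber of $\hat f$ is connected, so Stein factorization forces $\hat f_*\OO_{\hat S}=\OO_B$ and $\hat f$ is a fibration, proving (a). Applying Riemann--Hurwitz to this $\PP^1$-cover yields $2g(\hat F)-2=-2n+n(n-1)$, so $g(\hat F)=(n-1)(n-2)/2$, which gives (b).

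For (c), case $n=3$ is immediate from (b) since the general fiber has genus $1$. For $n\geq 4$ I would invoke the standard cyclic-cover formula $K_{\hat S}=\pi^*(K_S+(n-1)\cL)$, combined with the identity $K_S=-H-E+f^*K_B$ established earlier in the proof of the previous proposition. Rewriting gives
$$D := K_S+(n-1)\cL \;=\; (n-2)H-E+f^*K_B \;=\; (n-3)H + f^*(M+K_B).$$
Since $H$ is nef and $\deg(M+K_B)=d+2g-2>0$, both summands are nef, so $D$ is nef. A short calculation using $H^2=d$, $E^2=-d$, $H\cdot E=0$, $H\cdot f^*K_B=E\cdot f^*K_B=2g-2$, $(f^*K_B)^2=0$ yields
$$D^2 \;=\; (n-3)\bigl[(n-1)d+4(g-1)\bigr],$$
which is strictly positive for $n\geq 4$. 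A nef divisor on a surface with positive self-intersection is big, and bigness is preserved under pullback by the finite surjective map $\pi$, so $K_{\hat S}$ is big and $\hat S$ is of general type.

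The main technical point is the last step of (c): upgrading $K_{\hat S}^2>0$ to actual general type. The cleanest route is the decomposition $D=(n-3)H+f^*(M+K_B)$ into two manifestly nef pieces, which lets Kodaira's lemma (\emph{nef with positive self-intersection implies big}) close the argument immediately, bypassing any direct analysis of the plurigenera of $\hat S$.
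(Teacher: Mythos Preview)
Your proof is correct. Part (b) matches the paper exactly, while parts (a) and (c) take slightly different but equally valid routes.

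For (a), the paper computes $\hat f_*\OO_{\hat S}$ directly from the cyclic-cover decomposition $\pi_*\OO_{\hat S}=\bigoplus_{k=0}^{n-1}\cL^{-k}$ together with $f_*\cL^{-k}=0$ for $k>0$ (since $\cL$ has positive degree on fibers), obtaining $\hat f_*\OO_{\hat S}=f_*\OO_S=\OO_B$ in one line. Your approach via irreducibility of the generic fiber and Stein factorization is perfectly fine but requires the extra observation that the degree-$n$ cyclic cover of $\PP^1$ branched at $n$ simple points is connected; the paper's computation sidesteps this.

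For (c), the paper writes the same divisor in the form $\omega_S\otimes\cL^{n-1}=\OO_S((n-3)E)\otimes f^*(\omega_B\otimes M^{n-2})$ and then invokes the numerical ampleness criterion for ruled surfaces (\cite[V.2, Prop.~2.20]{HAG}) to conclude that this divisor is \emph{ample} for $n\geq 4$, hence its pullback $\omega_{\hat S}$ is ample and $\hat S$ is of general type. Your argument, decomposing $D=(n-3)H+f^*(M+K_B)$ into nef summands and checking $D^2>0$, yields only that $D$ is nef and big, but this is enough for general type and avoids quoting the ruled-surface criterion. The paper's route gives a marginally stronger conclusion (ample canonical class) with less computation; yours is more self-contained.
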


\begin{proof}
(a) The line bundle $\cL$ restricted to the fibers of $f$ has positive degree (more precisely, $H\cdot F=1$) and we have
$$\hat{f}_*\OO_{\hat{S}}=f_*(\pi_*\OO_{\hat{S}})= f_*\left(\bigoplus_{k=0}^{n-1}\cL^{-k}\right)= \bigoplus_{k=0}^{n-1}f_*(\cL^{-k})=f_*\OO_S=\OO_B.$$
Hence $\hat{f}$ is proper and surjective and has connected fibers as well, i.e. it is a fibration. 

(b) The general fiber $F$ of $S$ intersects $C_n$ transversally in $nH\cdot F=nE\cdot F=n$ points so $\hat{F}=\pi^{-1}(F)$ is a covering of $F$ totally ramified on $n$ points and unramified outside these $n$ points. From Riemann-Hurwitz we obtain that the genus of the general fiber $\hat{F}$ of $\hat{f}$ is
$$g(\hat{F})=\frac{(n-1)(n-2)}{2}.$$
% Singular fibers of $\hat{f}$ correspond to fibers of $f$ which have non transversal intersection with $C_n$. 
Notice, in particular, that $g(\hat{F})=1$ if $n=3$.

(c) By $(b)$, $\hat{S}$ is an elliptic fibration for $n=3$. Assume now $n\geq 4$. We want to show that $\omega_{\hat{S}}$ is ample. As $\hat{S}$ is a cyclic covering of $S$ of order $n$ we have $\omega_{\hat{S}}=\pi^*(\omega_S\otimes\cL^{n-1})$. Since
$$\omega_S\otimes \cL^{n-1}=\OO_S((n-3)E)\otimes f^*(\omega_B\otimes M^{n-2}),$$
by \cite[Chapter V.2, Proposition 2.20]{HAG} we have that $\omega_S\otimes \cL^{n-1}$ is ample as soon as $n\geq 4$. Then, being $\pi$ finite and surjective, we have that $\omega_{\hat{S}}$ is ample and $\hat{S}$ is canonically polarized.
\end{proof}

%%%%%%%%%%%%%%%%%%%%%%%%%%%%%%%%%%%%%%%%%%%%%%%%%%%%%%%%%
%%%%%%%%%%%%%%%%%%%%%%%%%%%%%%%%%%%%%%%%%%%%%%%%%%%%%%%%%

\subsection{The liftable case: surfaces in the Noether-Lefschetz locus}
\label{SUBSEC:NOETHERLEFSCHETZ}
~\\ 
For this example we restrict to the case $k=\bC$. We analyze some surfaces $S$ in $\PP^3$ with $\Omega_S^1$ that is $\cL$-liftable for a suitable big and semiample line bundle $\cL$. In order to find interesting examples one needs to consider surfaces in the Noether-Lefschetz locus as, otherwise, all big line bundles on $S$ would be multiples of the hyperplane class (see \cite[Chapter I, Section 3.3]{Voi2} and \cite{Lop} for details).

More precisely, $S$ will be a very general surface in the Noether-Lefschetz locus of sextic surfaces which contains a general quartic plane curve $E$. These surfaces have Picard rank $2$ and $\NS(S)\simeq\Pic(S)$ is spanned by the hyperplane class $H$ and by $E$ (see \cite{Lop}). An extremal ray for the cone of effective curves on $S$ is $E$ itself as $E^2=-4$ whereas the other one is the residual intersection  $R$ in $S$ of the hyperplane containing $E$. By construction, $R$ is a conic with self intersection $R^2=-6$. It is easy to see that $\OO_S(H+E)$ and $\OO_S(4H-E)$ (contracting $E$ and $R$ respectively) are, up to multiples, the only line bundles which are big, semiample (actually, globally generated) but not ample. Moreover, the classes of $H+E$ and $4H-E$ span the nef cone of $S$.

\begin{proposition}
The cotangent bundle $\Omega_S^1$ is $\cL$-liftable for $\cL$ equal to $\OO_S(H+E)$ or $\OO_S(4H-E)$.
\end{proposition}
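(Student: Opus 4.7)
The plan is to verify the surjectivity criterion of Theorem \ref{thm:doubled} for both line bundles simultaneously. Writing $E'=E$ when $\cL=\OO_S(H+E)$ and $E'=R$ when $\cL=\OO_S(4H-E)$, the divisor $E'$ is precisely the one contracted by $\varphi_{|\cL^d|}$, so it suffices to show that $\tau_m:H^0(\Omega^1_S)\to H^0(\Omega^1_S(mE'))$ is surjective for every $m\ge 0$. Since $S$ is a smooth sextic in $\PP^3$, the Lefschetz hyperplane theorem gives $q(S)=h^0(\Omega^1_S)=0$, so the whole question reduces to proving $H^0(\Omega^1_S(mE'))=0$ for every $m\ge 1$.

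I would first dispose of the range $m\ge 2$ inductively via the short exact sequence
\begin{equation*}
0\to \Omega^1_S((m-1)E')\to \Omega^1_S(mE')\to \Omega^1_S(mE')|_{E'}\to 0,
\end{equation*}
reducing each step to the vanishing of $H^0(\Omega^1_S(mE')|_{E'})$. This restriction is controlled by the conormal sequence $0\to N^{\ast}_{E'/S}\to \Omega^1_S|_{E'}\to \omega_{E'}\to 0$ together with adjunction on $S$ (using $\omega_S\cong\OO_S(2H)$). For $E'=E$, a smooth plane quartic of genus $3$, one gets $\OO_E(H)\cong\omega_E$ and $\OO_E(E)\cong\omega_E^{-1}$, so after twisting, $\Omega^1_S(mE)|_E$ is an extension of $\omega_E^{1-m}$ by itself, which has no global sections for $m\ge 2$. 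For $E'=R$, a smooth conic with $R^2=-6$, the sequence splits on $\PP^1$ and gives $\Omega^1_S(mR)|_R\cong\OO_{\PP^1}(6-6m)\oplus\OO_{\PP^1}(-2-6m)$, again with no sections for $m\ge 2$. So the induction brings everything down to the single case $m=1$.

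For the base case $m=1$ the plan is to combine the conormal sequence of $S\subset\PP^3$ twisted by $\OO_S(E')$,
\begin{equation*}
0\to \OO_S(E'-6H)\to \Omega^1_{\PP^3}|_S(E')\to \Omega^1_S(E')\to 0,
\end{equation*}
with the restricted Euler sequence
\begin{equation*}
0\to \Omega^1_{\PP^3}|_S(E')\to \OO_S(E'-H)^{\oplus 4}\to \OO_S(E')\to 0.
\end{equation*}
In both cases $\OO_S(E'-H)$ is anti-effective, since $E-H=-R$ and $R-H=-E$, hence $H^0(\Omega^1_{\PP^3}|_S(E'))=0$, and $H^0(\Omega^1_S(E'))$ injects into $H^1(\OO_S(E'-6H))$. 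By Serre duality and $\omega_S\cong\OO_S(2H)$, this last group is dual to $H^1(\OO_S(8H-E'))$. Writing $8H-E'=K_S+D'$ one gets $D'=6H-E$ and $D'=5H+E$ respectively; both are easily checked to be big and nef by intersecting with the two extremal rays $E$ and $R$ (using $H^2=6$, $E^2=-4$, $H\cdot E=4$, $R=H-E$). Kodaira vanishing then delivers $H^1(\OO_S(8H-E'))=0$ and concludes the argument.

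The main obstacle is precisely the base case $m=1$: the restriction-to-$E'$ computation alone only yields $h^0(\Omega^1_S(E')|_E)\le 2$ (resp.\ $h^0(\Omega^1_S(R)|_R)\le 1$), which is inconclusive. One genuinely needs the ambient projective embedding through the Euler and conormal sequences, together with Serre duality and Kodaira vanishing, to force the desired zero.
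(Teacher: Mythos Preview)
Your proof is correct and follows essentially the same approach as the paper: reduce to $H^0(\Omega^1_S(mE'))=0$, stabilize the sequence for $m\ge 2$ via the conormal sequence of $E'\subset S$, and handle the base case $m=1$ via the restricted Euler sequence together with the conormal sequence of $S\subset\PP^3$ and Kodaira vanishing on $H^1(\OO_S(E'-6H))$. The only differences are cosmetic: the paper treats just $\cL=\OO_S(H+E)$ and asserts the other case is analogous, whereas you carry both through explicitly; and the paper applies Kodaira directly to $-(6H-E)$ after noting $6H-E$ is ample, while you pass through Serre duality and check big and nef (your divisors are in fact ample, so ordinary Kodaira already suffices).
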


\begin{proof}
We will prove $\cL$-liftability of $\Omega_S^1$ for $\cL=\OO_S(H+E)$. With the same techniques, one can prove the result for $\OO_S(4H-E)$. 
\smallskip

Since $E$ is a general plane curve and $S$ is assumed to be very general, the morphism $\varphi_{|\cL|}$ cannot contract other curves besides $E$ (since the effective cone is spanned by $E$ and $R$).
As $h^0(\Omega_S^1)=0$, by Theorem \ref{thm:doubled}, we have to show that $h^0(\Omega_S^1(mE))=0$ for all $m\geq 0$. Notice that the sequence
$\{h^0(\Omega_S^1(mE))\}_{m\geq 0}$ is non decreasing. We claim that it is stationary from $m=1$ onwards. To see this we consider the cotangent bundle sequence of $E$ in $S$ and twist it with $mE$, i.e. the sequence $0\to \OO_E((m-1)E)\to \Omega_S^1(mE)|_E \to \omega_E(mE)\to 0$. This yields
$$0\to H^0(\OO_E((m-1)E))\to H^0(\Omega_S^1(mE)|_E)\to H^0(\omega_E(mE))\to \cdots.$$
For $m\geq 2$ we have that both $\deg_E((m-1)E)$ and $\deg_E(\omega_E(mE))$ are negative so we have $h^0(\Omega_S^1(mE)|_E)=0$. Hence, form the exact sequence
$$0\to \OO_S((m-1)E)\to \OO_S(mE)\to \OO_E(mE)\to 0$$
twisted by $\Omega_S^1$ we get $h^0(\Omega_S^1(E))=h^0(\Omega_S^1(mE))$
for all $m\geq 1$. Hence, it is enough to show that $h^0(\Omega_S^1(E))=0$.

The restriction of the Euler sequence on $\PP^3$ to $S$ twisted by $E$ yields an exact sequence
$$0\to H^0(\Omega_{\PP^3}^1(E)|_S)\to H^0(\OO_S(-H+E))^{\oplus 4}\to H^0(\OO_S(E))\to \cdots$$
which gives us $h^0(\Omega_{\PP^3}^1(E)|_S)=0$ as $-H+E=-R$ is not effective. If we denote by $N_{S/\PP^3}^*$ the conormal bundle of $S$ in $\PP^3$, we can write the cotangent sequence of $S$ in $\PP^3$ twisted by $\OO_S(E)$ as
$$0\to N_{S/\PP^3}^*(E)\to \Omega_{\PP^3}^1(E)|_S\to \Omega_S^1(E)\to 0.$$
As $h^0(\Omega_{\PP^3}^1(E)|_S)=0$, this gives an injection $H^0(\Omega_S^1(E))\hookrightarrow H^1(N^*_{S/{\PP^3}}(E))$. Hence, we are done if we prove that $H^1(N^*_{S/{\PP^3}(E)})=0$. Notice that
$$H^1(N^*_{S/{\PP^3}}(E))=H^1(\OO_S(-6H+E)).$$
The divisor $6H-E$ is ample since it is in the interior of the nef cone, which is spanned, as observed before, by $H+E$ and $4H-E$. Hence, by Kodaira vanishing, we have $H^1(N^*_{S/{\PP^3}}(E))=H^1(\OO_S(-6H+E))=0$ as desired.
\end{proof}

Besides the sextic surfaces containing a plane quartic, we have studied other components of the Noether-Lefschetz locus. We could not find any pair $(S,\cL)$ for which $\Omega_S^1$ is not $\cL$-liftable. Motivated by this, we pose the following question:

\begin{question}
\label{QUE:SURFINP3}
Is there any surface $S$ in $\PP^3$ with a big and semiample line bundle $\cL$ for which $\Omega_S^1$ is not $\cL$-liftable?
\end{question}

%%%%%%%%%%%%%%%%%%%%%%%%%%%%%%%%%%%%%%%%%%%%%%%%%%%%%%%%%%%%%%%%%%%%%%%%%%%%%
%%%%%%%%%%%%%%%%%%%%%%%%%%%%%%%%%%%%%%%%%%%%%%%%%%%%%%%%%%%%%%%%%%%%%%%%%%%%%
%%%%%%%%%%%%%%%%%%%%%%%%%%%%%%%%%%%%%%%%%%%%%%%%%%%%%%%%%%%%%%%%%%%%%%%%%%%%%

%%%%%%%%%%%%%%%%%%%%%%%%%%%%%%%%%%%%%%%%%%%%%%%%%%%%%%%%%%%%%%%%%%%%%%%%%%%%%
%%%%%%%%%%%%%%%%%%%%%%%%%%%%%%%%%%%%%%%%%%%%%%%%%%%%%%%%%%%%%%%%%%%%%%%%%%%%%
%%%%%%%%%%%%%%%%%%%%%%%%%%%%%%%%%%%%%%%%%%%%%%%%%%%%%%%%%%%%%%%%%%%%%%%%%%%%%

%%%%%%%%%%%%%%%%%%%%%%%%%%%%%%%%%%%%%%%%%%%%%%%%%%%%%%%%%
%%%%%%%%%%%%%%%%%%%%%%%%%%%%%%%%%%%%%%%%%%%%%%%%%%%%%%%%%

\section{Holomorphic one forms on $\Mgo$}

Let $\cM_{g}$ be the coarse moduli space of smooth complex  projective curves of genus $g\geq 2$. We will use some classical results about $\cM_g$ and its compactifications. The reader can refer to \cite{ACG2} and \cite{HM}. The variety $\cM_{g}$ is quasi-projective and it is singular on points parametrizing curves with non-trivial automorphism group for $g\geq4$. We denote $\Mgo\subset \cM_g$ the locus of points parametrizing curves with trivial automorphism group, which coincides with the smooth locus for $g\geq 4$. Furthermore, the singular locus $\cM_g^{sing}=\cM_g\setminus \Mgo$ has codimension $g-2,$ since its largest subvariety is the hyperelliptic locus.
\\

We are interested in studying the cotangent bundle $\Omega^1_{\Mgo}$. Our result is the following:
\begin{theorem}
\label{THM:NOONEFORMS}
For $g\geq 5$, $\Mgo$ has no holomorphic forms, that is $H^0(\Omega^1_{\Mgo})=0$.
\end{theorem}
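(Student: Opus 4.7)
The plan is to fix $\omega\in H^0(\Omega^1_{\Mgo})$, restrict it to a general $H$-surface, and prove the restriction vanishes; since $H$-surfaces sweep out a Zariski-dense subset of $\Mgo$, this will force $\omega=0$. Concretely, I would fix $3g-5$ general hyperplanes of $\MSat$, pull them back along the Satake map $\Sat\colon\MDM\to\MSat$ to obtain an $H$-surface $S\subset \MDM$, and then take one additional general hyperplane to produce an $H$-curve $C\subset S$. Using the cited properties of $\Sat$ (that it contracts precisely $\Delta_1$) and the fact that $\cM_g^{sing}$ has codimension $g-2\geq 3$ for $g\geq 5$, a Bertini-type argument yields that $S$ is smooth, that $E:=S\cap\Delta_1$ is the entire intersection of $S$ with the boundary $\partial\MDM$, that $C\subset\Mgo$ is disjoint from $E$, and that $E$ is exactly the divisor contracted by $\Sat|_S$.

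Set $\cL:=\cO_S(C)$. Since $|\cL^k|$ is, up to the contribution of $C$, the pullback from $\Sat(S)\subset\MSat$ of a power of the ample hyperplane class, $\cL$ is big and semiample on $S$, and the divisor contracted by $\varphi_{|\cL^d|}$ for $d\gg 0$ is precisely $E$. The restriction $\omega|_{S\setminus E}$ is a holomorphic section of $\Omega^1_S$ on the open connected neighborhood $U:=S\setminus E$ of the effective divisor $C\in|\cL|$. By Theorem \ref{THM:EXTENSIONFROMOPENS}, if $\Omega^1_S$ is $\cL$-concave (equivalently $\cL$-liftable) then $\omega|_U$ extends to a section $\tilde\omega\in H^0(\Omega^1_S)$, so the theorem reduces to proving
\begin{equation*}
\Omega^1_S \text{ is } \cL\text{-liftable}\qquad\text{and}\qquad H^0(\Omega^1_S)=0.
\end{equation*}

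I would establish both claims at once by showing the stronger vanishing $H^0(\Omega^1_S(mE))=0$ for every $m\geq 0$, which, through Theorem \ref{thm:doubled}, automatically yields surjectivity of each $\tau_m$ and hence $\cL$-liftability. For $m=0$, the vanishing follows from a Sommese-type Lefschetz theorem applied to $S$, viewed as a complete intersection in $\MDM$ cut out by divisors from the big and semiample Hodge linear system: one deduces $H^1(S,\bC)=0$ and thus $H^0(\Omega^1_S)=0$. For the inductive step I would use
\begin{equation*}
0\to\Omega^1_S((m-1)E)\to\Omega^1_S(mE)\to\Omega^1_S(mE)|_E\to 0
\end{equation*}
together with the cotangent sequence of $E\subset S$ twisted by $\cO_S(mE)$,
\begin{equation*}
0\to\cO_E(mE)\to\Omega^1_S(mE)|_E\to\omega_E(mE)\to 0,
\end{equation*}
reducing the problem to the vanishing of $H^0$ of the two line bundles $\cO_E(mE)$ and $\omega_E(mE)$ on each component of the curve $E$.

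The hard part will be verifying the resulting degree inequalities, i.e.\ that on each component of $E$ the restriction $\cO_E(E)$ is negative enough to dominate the adjunction contribution coming from $K_S|_E$. This is a quantitative statement about the normal bundle of $\Delta_1\subset\MDM$ pulled back to a general $H$-surface, and it is here that the fact $E$ is $\Sat$-contracted (so that its self-intersection in $S$ is strictly negative) provides the geometric input. Once this negativity is in hand, $\tilde\omega=0$ and hence $\omega$ vanishes on $S\setminus E$; letting $S$ range over the family of general $H$-surfaces, whose union is Zariski-dense in $\Mgo$, yields $\omega=0$ identically.
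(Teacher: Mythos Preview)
Your strategy coincides with the paper's: restrict to a general $H$-surface $S$, apply Theorem \ref{thm:doubled} with $\cL=\cO_S(H)$ and contracted divisor $E=S\cap\Delta_1$, and prove $H^0(\Omega_S^1(mE))=0$ for all $m\ge 0$. There is, however, a real gap in your inductive step.

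First a small correction: the twisted conormal sequence reads
\[
0\longrightarrow \cO_E((m-1)E)\longrightarrow \Omega_S^1(mE)|_E\longrightarrow \omega_E(mE)\longrightarrow 0,
\]
with $(m-1)E$, not $mE$, in the first term. Once this is fixed the issue becomes visible: for the passage from $m=0$ to $m=1$ the first term is $\cO_E$, whose $H^0$ has dimension equal to the number of components of $E$ and is therefore nonzero. Hence $H^0(\Omega_S^1(E)|_E)\cong H^0(\cO_E)\neq 0$, and the naive vanishing does not close the step $H^0(\Omega_S^1)\hookrightarrow H^0(\Omega_S^1(E))$. The paper treats this case separately: using $\Omega_S^1(\log E)$ and the residue map, the connecting homomorphism $\partial:H^0(\cO_E)\to H^1(\Omega_S^1)$ is identified with $(a_1,\dots,a_k)\mapsto\sum a_i\, c_1(E_i)$, and injectivity follows because the $E_i$ are disjoint curves with negative self-intersection, hence with independent classes in $H^{1,1}(S)$. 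This Chern-class/residue argument is the idea missing from your outline.

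For $m\ge 2$ your inductive scheme does work, once the degrees are computed. From the canonical class of $\MDM$ and the $2$-divisibility of $\Delta_1$ the paper gets $\omega_S=\cO_S(kH-\tfrac32 E)$; since $H|_E=0$ (as $E$ is $\Sat$-contracted), adjunction yields $\omega_E=\cO_E(-\tfrac12 E)$, so both $\omega_E(mE)=\cO_E\bigl((m-\tfrac12)E\bigr)$ and $\cO_E((m-1)E)$ have negative degree for $m\ge 2$. This is precisely the ``quantitative statement'' you allude to, and it is not the hard part once $\omega_S$ is known. Finally, the base case $H^0(\Omega_S^1)=0$ is obtained in the paper from $h^1(\cO_S)=0$ via iterated Kawamata--Viehweg vanishing along the big and nef system $|H|$ on a resolution of $\MDM$; your proposed Lefschetz-type argument for $H^1(S,\bC)=0$ is plausible but would need justification, since $H$ is only big and semiample.
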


It is well known (see {\cite{Mu}) that $\Mgo$ has no closed holomorphic $1$-forms with respect to the de-Rham differential. For completeness, we will briefly recall this in Theorem \ref{THM:BASICCOH}. Nevertheless, $H^0(\Omega^1_{\Mgo})$ could still be non zero since $\Mgo$ is not compact and so holomorphic forms are not automatically closed.
\\

%The vanishing of $H^0(\Omega^1_{\Mgo,c})$ follows from a topological argument (see Theorem \ref{THM:BASICCOH}) while that one for $H^0(\Omega^1_{\Mgo})$ turns out to be more involved.

To prove the result, we need to use two  classical compactifications of $\cM_g$ that we recall now. The first one is the Deligne-Mumford compactification $\MDM$ (see \cite{HM}, \cite{ACG2}), which is defined as the coarse moduli space of stable curves of genus $g$. It is a projective variety and the boundary $\partial \MDM=\MDM\setminus \cM_g$ is $$\Delta=\Delta_0\cup \Delta_1\cup \Delta_2 \cup \cdots \cup \Delta_{[g/2]}.$$
It is build up as union of divisors $\Delta_i$, for $i=0,1, \dots, [g/2]$, characterized as follows. 
The generic point of $\Delta_0$ represents the class of an irreducible nodal curve with a single node and arithmetic genus $g$. The generic point of $\Delta_i$ with $i\geq 1$ represents the class of a nodal curve with two smooth components of genus $i$ and $g-i$, respectively, meeting at a single node. 
Let $\lambda$ be the Hodge class and recall that $\Delta_1$ is divisible by $2$ in the Picard group of $\MDM$. 
 
The canonical divisor of $\MDM$ can be written (see \cite[Page 160, Equation (3.113)]{HM}) as 
$$K_{\MDM}=13\lambda-\frac{3}{2}\Delta_1-2\Delta_0-2\sum_{i=2}^{[g/2]}\Delta_i=13\lambda-2\Delta+\frac{1}{2}\Delta_1.$$

\begin{remark}
\label{REM:DIVISIBILITY}
Recall that the canonical divisor $K_{\MDM}$ of the coarse moduli space is related to the canonical divisor $K$ of the moduli stack by the formula $\pi^\ast K_{\MDM}= K+\delta_1$ where $\pi$ denotes the projection from the moduli stack to the coarse moduli space and $\delta_1$ is the class of the divisor $\Delta_1$ in the moduli stack.
For a general
point $[C]\in \Delta_1$ we have in fact that the versal deformation space of $C$ is a two-sheeted cover onto image in $\MDM$, ramified over $\Delta_1$. So in the language of stack the morphism $\pi$ is ramified over $\Delta_1$ and therefore $\Delta_1$ is $2$-divisible. For a complete explaination see \cite[Page 160]{HM}.
\end{remark}

The second compactification we are interested in is the Satake compactification $\MSat$. It is constructed by considering the Satake compactification $\ASat$ of $\cA_g$ (see \cite{SAT,BB,Igu}) and the morphism $\Sat:\MDM\to \ASat$, defined set theoretically by sending a stable curve to the Jacobian of its normalization. Then $\MSat$ is defined as the image of $\Sat$ and it is projective since $\ASat$ is projective (see \cite{BB,Igu}). Let $\HSat$ be the ample class on $\ASat$ giving the Satake embedding and consider its pullback $H$ on $\MDM$, which is therefore big and semiample since the Torelli morphism is given by a multiple of $\lambda$.

%The restriction of $H$ to $\MDM\setminus \Delta_0$ is a multiple of the restriction of the Hodge class $\lambda$.

Furthermore, $\Sat$ is a birational morphism that is injective on $\cM_g$ by the Torelli Theorem and contracts the divisor $\Delta_1$ to a subvariety of $\MSat$ of codimension $2$. Whereas, if $i\neq 1$, $\Delta_i$ is contracted to a subvariety having codimension $3$. 
% The Hodge class $\HSat$ on $\ASat$ is ample and its pullback on $\MDM$ is the Hodge class $H$, which is therefore big and semiample. In other words, the morphism induced by a suitable multiple of $H$ factors as in the following diagram: 

 In other words, the morphism induced by a suitable multiple of $H$ factors as in the following diagram:

$$
\xymatrix@C=2cm@R=1cm{
\MDM \ar@{->>}[rd]_{\Sat} \ar[r]_{\Sat} \ar@/^2.0pc/@{^->}[rr]^{\varphi_{|dH|}} & 
\ASat \ar@{^{(}->}[r]_-{\varphi_{|d\HSat|}} & \PP^{N} \\
& \MSat \ar@{^{(}->}[u]
}
$$
%@{^{(}->}

We give the following definition:
\begin{definition}\label{DEF:SATAKEVAR} Let $\varphi_{|dH|}:\MDM\to\PP^{N}$ be the map introduced above and let $L\subset \PP^{N}$ be a linear subspace of codimension $c$.
We set $X_L= \varphi_{|dH|}^{-1}(L)$ and say that $X_L$ is an $H$-variety if $\dim X_L=3g-3-c$. In particular,  for $c=3g-5,$ $X_L$  is an $H$-surface and for $c=3g-4$ it is an $H$-curve.
\end{definition}

%%---------------------------------

%--------------------------------

% On $\MDM$ there is the Hodge class $H$ which is big and semiample. Moreover, $\Delta_1$ is divisible by $2$ in the Picard group of $\MDM$. 
% \\

% The canonical class of $\MDM$ can be written as 
% $$K_{\MDM}=13H-\frac{3}{2}\Delta_1-2\sum_{i=2}^{[g/2]}\Delta_i=13H-2\Delta+\frac{1}{2}\Delta_1.$$

% Consider the Satake compactification $\ASat$ of $\cA_g$. On $\ASat$ there is the Hodge class $\HSat$ which is ample. One has a birational morphism $\Sat:\MDM\to \ASat$ sending a stable curve to the Jacobian of its normalization. Its image $\MSat$ is the Satake compactification of $\cM_g$ and is a subvariety of $\ASat$.
% The morphism induced by a high enough multiple of $H$ factors as $\Sat$ composed with an embedding of $\MSat$ induced by a multiple of $\HSat$. The morphism $\Sat$ contracts the divisor $\Delta_1$ to a subvariety of $\MSat$ of codimension $2$ whereas $\Delta_i$ with $i\neq 1$ is contracted to a subvariety having codimension $3$.

%%%%%%%%%%%%%%%%%%%%%%%%%%%%%%%%%%%%%%%%%%%%%%%%%%%%%%%%%
%%%%%%%%%%%%%%%%%%%%%%%%%%%%%%%%%%%%%%%%%%%%%%%%%%%%%%%%%

\subsection{Closed holomorphic forms on $\Mgo$} 

~\\ 
We denote by  $\Omega^i_{\Mgo,c}$ the kernel of $d: \Omega^i_{\Mgo}\to \Omega^{i+1}_{\Mgo}$ where $d$ is the holomorphic de Rham differential. Hence $\Omega^i_{\Mgo,c}$ is just the sheaf of $d$-closed holomorphic $1$-forms on $\Mgo$. 
%
% The inclusion $H^0(\Omega^1_{\Mgo,c})\subseteq H^0(\Omega^1_{\Mgo})$ can be strict a priori, since $\Mgo$ is not compact. 

%Recall that $\Omega^1_{M^o, c}\subset \Omega^1_{M_g^o}$ denote the subsheaf of closed holomorphic forms with respect to the de-Rham differential. 

\begin{theorem}
\label{THM:BASICCOH}
If $g\geq 4$, then $H^0( \cO_{\Mgo})=\bC$, $H^1(\Mgo, \bC)=0$ and $H^0(\Omega^1_{M^o_g,c})=0$.
\end{theorem}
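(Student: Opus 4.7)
The plan is to prove the three statements in a natural order, deducing the last from the previous two.

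\textbf{Step 1:} I would first treat $H^1(\Mgo,\bC)=0$. By Mumford's theorem (or equivalently, Powell's result on the abelianization of the mapping class group), $H^1(\cM_g,\bZ)=0$ for $g\geq 3$, hence $H^1(\cM_g,\bC)=0$. Since $\cM_g^{\text{sing}}=\cM_g\setminus \Mgo$ has complex codimension $g-2\geq 2$ for $g\geq 4$, it has real codimension at least $4$, so the inclusion $\Mgo\hookrightarrow \cM_g$ induces an isomorphism on $H^1$ (by the long exact sequence for the pair, or by viewing $\cM_g$ as a topological pseudomanifold and applying a standard codimension argument). This gives $H^1(\Mgo,\bC)=0$.

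\textbf{Step 2:} Next I would prove $H^0(\cO_{\Mgo})=\bC$. The Satake compactification $\MSat$ is an irreducible compact normal projective variety. By the analysis of $\Sat$ given just before the theorem, the complement $\MSat\setminus \Mgo$ has codimension at least $2$: the Satake map is injective on $\Mgo$, contracts $\Delta_1$ to a subvariety of codimension $2$ and each $\Delta_i$ with $i\neq 1$ to a subvariety of codimension $3$, while $\cM_g^{\text{sing}}$ contributes in codimension $g-2\geq 2$. Any $f\in H^0(\cO_{\Mgo})$ pulls back via $\Sat$ to a holomorphic function on the open subset $\Sat^{-1}(\Mgo)\subset \MDM$, whose complement is a union of divisors contracted by $\Sat$; hence $f$ descends (on the open subset where $\Sat$ is an isomorphism) to a holomorphic function on $\MSat$ defined off a subset of codimension $\geq 2$. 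By the second Riemann extension theorem applied to the normal complex space $\MSat$, the function extends globally. Since $\MSat$ is compact and connected, the extension is constant.

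\textbf{Step 3:} The statement $H^0(\Omega^1_{\Mgo,c})=0$ follows formally from Steps 1 and 2. Let $\omega$ be a $d$-closed holomorphic $1$-form on $\Mgo$. Viewed as a smooth $d$-closed $1$-form, it defines a class in $H^1(\Mgo,\bC)=0$, so $\omega=df$ for some $\cC^\infty$ function $f$. Decomposing $df=\partial f+\debar f$ by type and using that $\omega$ is of type $(1,0)$, we see $\debar f=0$, i.e.\ $f$ is holomorphic. By Step 2, $f$ is constant, hence $\omega=0$.

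\textbf{Main obstacle:} The only delicate point is the extension argument in Step 2, where one must carefully combine the normality of $\MSat$ with the codimension bounds on all pieces of $\MSat\setminus \Mgo$ (boundary strata coming from $\partial \MDM$ and singular locus of $\cM_g$). The homological input in Step 1 is classical but non-trivial, and needs $g\geq 3$ for the mapping class group result and $g\geq 4$ to keep the singular locus in codimension $\geq 2$; Step 3 is then a short formal consequence.
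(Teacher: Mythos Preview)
Your Step~3 matches the paper exactly. Steps~1 and~2 reach the right conclusions but by routes different from the paper's, and Step~2 has a point that needs attention.

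For $H^1(\Mgo,\bC)=0$, the paper works on the Teichm\"uller side: it removes the preimage of $\cM_g^{sing}$ from the contractible manifold $\cT_g$ (real codimension $\geq 4$), observes that $\cT_g^o$ remains simply connected, and concludes $\pi_1(\Mgo)\cong\Gamma_g$ from the free action of $\Gamma_g$ on $\cT_g^o$. Your route via $H^1(\cM_g,\bC)$ uses the same group-theoretic input but then compares $H^1$ of $\cM_g$ and of $\Mgo$ across the singular locus of a \emph{singular} space; this is fine over $\bC$ because $\cM_g$, having only quotient singularities, is a $\bQ$-homology manifold, but that deserves a word. The paper's version is a little cleaner since the codimension argument happens inside an honest manifold.

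For $H^0(\cO_{\Mgo})=\bC$, the paper takes a much more elementary path: it covers $\Mgo$ by projective $H$-curves (complete intersections in $|aH|$), and any holomorphic function is constant on each such curve, hence globally constant. Your Hartogs-on-$\MSat$ argument is natural, but as written it assumes $\MSat$ is normal, which the paper does not assert: here $\MSat$ is defined simply as the \emph{image} of $\Sat$, not its normalization. The fix is easy---since $\MDM$ is normal, $\Sat$ factors through the normalization of $\MSat$; normalization is finite, so your codimension-$\geq 2$ estimate on the complement of the image of $\Mgo$ survives, and the second Riemann extension theorem then applies. Also, the detour ``pull back via $\Sat$ to $\MDM$'' is unnecessary: once $\Sat$ is injective on $\Mgo$, you already have your function on $\Sat(\Mgo)$, and the complement of $\Mgo$ in $\MDM$ is \emph{not} just contracted divisors (it also contains $\cM_g^{sing}$), so that sentence is slightly misleading.
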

\begin{proof} We first prove $H^0( \cO_{\Mgo})=\bC$. Consider a point $p\in \Mgo$, then we can cut out a smooth projective curve $C_q$ in $\Mgo$ passing through it and a general point $q\in \cM^o_g$ by using hyperplanes of $\Mgo\subset \MDM$. This can be done since we are assuming $g\geq 4$ and then the complement of $\Mgo$ inside $\MDM$ has codimension at least two. Then, for any $f\in H^0(\cO_{\Mgo})$, since $C_q$ is projective, $f|_{C_q}$ is constant. Then we have $f(p)=f(q)$, so $f$ is constant on $\Mgo$.
%	 Let $f\in H^0(\cO_{\Mgo})$, $f\neq0$. Then for any such a curve $C$, the restriction $f_{|C}$ might be costant because it is a holomorphic function on a smooth projective curve, and since any such two curves $C_i$ and $C_j$ intersect at least in $p$, then $f$ might restrict to the same constant. But $\Mgo$ is covered by such curves intersecting at $p$, so $f$ might be constant on $\Mgo$ as wanted.

Let $\mathcal T_g$ be the Teichmuller space of Riemann surfaces of genus $g$ and consider the mapping class group $\Gamma_g$. The proof of $H^1(\Mgo,\bC)=0$ relies on the result about the abelianization of the mapping class group $\Gamma_g$: it is trivial as soon as $g\geq 3$ (see \cite{Mu,Har}). We recall that $\mathcal T_g$ is contractible and that $\Gamma_g$ acts properly discontinuously on $\mathcal T_g$  with quotient  $\cM_g.$ Let
$\pi: \mathcal T_g \to \mathcal T_g/\Gamma_g=\cM_g$ be the quotient map. Set $\mathcal T_g^o=\pi^{-1}(\Mgo),$ and let
$\pi^o: \mathcal T_g^o\to \Mgo$ be the restriction of $\pi$. The action of $\Gamma_g$ on $ \Mgo$
is free, and, for $g\geq 4$, $ \mathcal T_g\setminus \mathcal T_g^o$ has codimension in $ \mathcal T_g$ equal to $g-2\geq 2$. 
Therefore the fundamental groups of $\mathcal T_g^o$ and of $\mathcal T_g$ are isomorphic. It follows that $\pi^o$ is the universal covering of $\Mgo$. Then $\Pi_1(\Mgo,x_0)\simeq \Gamma_g$ and $H_1(\Mgo,\bC)\simeq H^1(\Mgo,\bC)=0$. 
	
Let $\eta$ be be a closed  holomorphic form on $\Mgo$. Then, by the above result, $\eta$ is also exact, that is there exists $f$ such that $\eta=df$. Since $\eta$ is holomorphic, $f$ is a holomorphic function and thus it is constant. Consequently, $\eta=0$.
%$\eta\in H^0(\Omega^1_{M^o_g})$ a closed form 
%To prove $H^0(\Omega^1_{M^o_g})=0$, consider the holomorphic de Rham short exact sequence 
%$$0\to \bC\to \cO_{M^o_g}\xrightarrow{d} \Omega^1_{M^o_g,c}\to 0.$$
%Passing to the long exact sequence in cohomology and using the previous result, we obtain 
%$$0\to \bC\to H^0(\cO_{M^o_g})\simeq \bC\xrightarrow{d} H^0(\Omega^1_{M^o_g,c})\to H^1(M^o_g, \bC)=0,$$
%from which we conclude $H^0(\Omega^1_{M^o_g,c})=0$.

\end{proof}
\begin{corollary} 
\label{COR:COHRES}
Let $\nu: M'\to \MDM$ be a resolution of singularities such that $\nu^{-1}(\Mgo)\simeq \Mgo$. Then $H^0(\Omega^1_{M'})=0$, $H^1(\cO_{M'})=0$ and $H^1(M', \bC)=0$.
	\end{corollary}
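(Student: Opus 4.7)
The plan is to reduce all three vanishings to Theorem \ref{THM:BASICCOH} and then exploit Hodge theory on a smooth projective variety.

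First, I would reduce to the case in which $M'$ is smooth \emph{and} projective. By Hironaka there exists a projective resolution $\tau' : M'' \to \MDM$, and since any two resolutions of the same variety are birational, the invariants $h^{1,0}$, $h^{0,1}$ and $b_1$ agree on $M'$ and $M''$ (they are birational invariants of smooth projective varieties). So it is harmless to assume $M'$ itself smooth projective, hence compact Kähler.

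Next I would attack $H^0(\Omega^1_{M'}) = 0$. Since $\Mgo$ is contained in the smooth locus of $\MDM$, the resolution $\tau$ is an isomorphism over $\Mgo$, giving an open dense embedding $\Mgo \hookrightarrow M'$. Now every $\omega \in H^0(\Omega^1_{M'})$ is automatically $d$-closed, because $M'$ is compact Kähler. Its restriction to $\Mgo$ is then a closed holomorphic $1$-form, which vanishes by Theorem \ref{THM:BASICCOH}. By density of $\Mgo$ in $M'$ we get $\omega = 0$, so $H^0(\Omega^1_{M'}) = 0$.

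The remaining two vanishings come for free from the Hodge decomposition: $H^{0,1}(M') = \overline{H^{1,0}(M')} = \overline{H^0(\Omega^1_{M'})} = 0$, which immediately gives $H^1(\cO_{M'}) = 0$, while $H^1(M', \bC) = H^{1,0}(M') \oplus H^{0,1}(M') = 0$. The only mildly delicate point I see is the initial reduction to a projective resolution; once that is in place, everything follows from Theorem \ref{THM:BASICCOH} together with standard Hodge theory on compact Kähler manifolds.
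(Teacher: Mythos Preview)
Your argument is correct and follows essentially the same line as the paper: restrict a holomorphic $1$-form on $M'$ to $\Mgo$, use that it is $d$-closed (because $M'$ is smooth projective/compact K\"ahler), invoke Theorem~\ref{THM:BASICCOH} to kill it, and then read off the other two vanishings from Hodge theory. The only difference is that the paper simply asserts ``since $M'$ is projective'' and proceeds, whereas you add an explicit reduction step via birational invariance of $h^{1,0}$, $h^{0,1}$, $b_1$; note that for this reduction to be fully rigorous you should invoke birational invariance for smooth \emph{proper} varieties (not just smooth projective ones), since a priori $M'$ is only proper.
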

\begin{proof} Let  $\eta \in H^0(\Omega^1_{M'})$.  Since $M'$ is projective, then $\eta$ is $d$-closed and so its restriction $\eta|_{\nu^{-1}(\Mgo)}=\eta|_{\Mgo}$ to $\Mgo$ is $d$-closed, that is $\eta|_{\Mgo}\in H^0(\Omega^1_{M^o_g,c})$. By Theorem \ref{THM:BASICCOH}, $\eta|_{\Mgo}=0$ and then $\eta=0$. Since $M'$ is smooth we have also $H^1(\OO_{M'})=H^0(\Omega_{M'}^1)=0$.
\end{proof}

%%%%%%%%%%%%%%%%%%%%%%%%%%%%%%%%%%%%%%%%%%%%%%%%%%%%%%%%%
%%%%%%%%%%%%%%%%%%%%%%%%%%%%%%%%%%%%%%%%%%%%%%%%%%%%%%%%%

\subsection{$H$-surfaces}
\label{SUBS:HSURF}

~\\ 
From now on $S$ will be a general $H$-surface in $\MDM$, i.e. it is a general complete intersection of $3g-5$ hypersurfaces whose classes are suitable multiples of $H$. This surface $S$ will play a central role in the proof of Theorem \ref{THM:NOONEFORMS}. We now give a list of properties of $S$.

\begin{proposition}
\label{PROP:PROPOFS}
For $g\geq 5$, a general $H$-surface $S$ satisfies the following properties:
\begin{enumerate}[(a)]
\item\label{PROP:PROPOFSa}
$S$ is smooth and contained in the open set of smooth points of $\MDM$. Moreover, $\cO_S(n\lambda)\simeq \cO_S(H)$ for suitable $n>0$, where $\lambda$ denotes the restriction of the Hodge class to $S$.
\item\label{PROP:PROPOFSb}
We have $S\cap \Delta_i=\emptyset$ for $i\neq 1$ and $E=S\cap \Delta_1$ is an effective divisor which is disjoint union of smooth curves of genus $g-1$.
\item\label{PROP:PROPOFSc}
The canonical sheaf of $S$ is $\omega_S=\OO_S\left(k\lambda+\frac{3}{2}E\right)$ for some suitable $k>0$.
\item\label{PROP:PROPOFSd}
We have $H^0(\Omega_S^1)=0$.
\item\label{PROP:PROPOFSe}
The morphism $\Sat|_S$ is birational, contracts $E$ to a finite number of points, is an isomorphism outside $E$
and $H\cdot E=0$.
\item\label{PROP:PROPOFSf}
Fix a general point $p\in \Mgo$ and a general vector $v\in T_p\Mgo$. Then there exists an $H$-surface $S$ such that $v\in T_pS$.
\end{enumerate}
In particular, $S$ is a smooth regular surface in $(\Mgo\cup\Delta_1)\setminus \Sing(\MDM)$.
\end{proposition}

\begin{proof}
We recall that $\dim \Sat (\Delta_i)=3g-6$ for $i\neq 1$ and $\dim \Sat (\Delta_1)=3g-5$. The general point of $\Delta_1$ is a curve $B$ with one node and two smooth components given by an elliptic curve $D$ and a smooth curve $C$ of genus $g-1\geq 4$ that we can take without non-trivial automorphisms. Then $\Sat(B)=D\times JC$ and the fiber of $\Sat$ over this point, by Torelli's Theorem, is described as the curves obtained by glueing $D$ and $C$ at a point. Because of the translations on $D$, the fiber over $D\times JC$ has dimension $1$ and can be identified by $C.$  

% ----

Consider the locus $\Delta_{1,s}$ of singular points of $\MDM$ lying in $\Delta_1$, i.e. $\Delta_{1,s}=\Sing(\MDM)\cap \Delta_1$, and set 
$$Y=\Sat(\Delta_0\cup  \Delta_{1,s}\cup \Delta_{2}\cup\dots \cup\Delta_{[g/2]}\cup \Sing(\cM_g)).$$
We claim now that, under the assumption $g\geq 5$,  $Y$ has codimension $3$ in $\MSat$.
It is enough to show that the image of $\tilde{Y}=\left(\Delta_{1,s}\setminus \bigcup_{i\neq 1} \Delta_i\right)$ has codimension $3$. Moreover, we can restrict to the subspace of $\tilde{Y}$ represented by curves with at most $2$ nodes (since curves with more than $2$ nodes define loci of dimension $3g-6$). If $B$ is one of such a curves we have three possible cases: 
\begin{description}
    \item [(1)] $B$ is a curve which is the union of a smooth curve $C$ of genus $g-1$ and an elliptic curve meeting at a point $P$. We distinguish two subcases: 
    \begin{description}
        \item [(1.a)] $\Stab_{\Aut(C)}(P)\neq \{\id\}$;
        \item [(1.b)] $\Stab_{\Aut(D)}(P)\neq \{\pm\id\}$.
    \end{description}
    \item [(2)] $B$ is a curve which is the union of a smooth curve $C$ of genus $g-2$ and $2$ disjoint elliptic curves $E_1,E_2$ such that $E_i\cap C=Q_i$ is a point. 
\end{description}
We denote by $\tilde{Y}_{(1.a)}$, $\tilde{Y}_{(1.b)}$ and $\tilde{Y}_{(2)}$ the corresponding loci in $\MDM$. First of all, notice that $\Sat(\tilde{Y}_{(2)})$ has dimension $3g-7$. The locus $\tilde{Y}_{(1.a)}$ has dimension at most $2g-2$ (which is the dimension of the locus of hyperelliptic curves of genus $g-1$ plus the dimension of the moduli of elliptic curves) and so $\Sat(\tilde{Y}_{(1.a)})$ has codimension more than $3$.
Finally, notice that $\tilde{Y}_{(1.b)}$ has dimension $3g-5$ but it is also true that all the fibers of $\Sat$ of points of $\tilde{Y}_{(1.b)}$ is contained in $\tilde{Y}_{(1.b)}$, so $\Sat(\tilde{Y}_{(1.b)})$ has dimension $3g-6$.

% The generic point of any component of $\Delta_{1,s}$ is a curve $D\cup C$ as above with a node $P$ such that $D$ has more than two automorphisms fixing $P$. It follows that $\dim\Delta_{1,s}=3g-5$ and $\dim \Sat(\Delta_{1,s})=3g-6$.   Set 
% $$Y=\Sat(\Delta_0\cup  \Delta_{1,s}\cup \Delta_{2}\cup\dots \cup\Delta_{\Delta_{[g/2]}}\cup \Sing(\cM_g)).$$
% Then we have $\dim Y=3g-6$, i.e $Y$ has codimension $3$ in $\MSat.$ 

% ----

(a) To prove that a general $S$ is smooth, we can assume first $\Sat(S)\cap Y=\emptyset$.  It follows in particular that $S$ is disjoint from $\Sing(\MDM)$. Moreover, since $H$ is semiample on $\MDM$, the general $S$ does not have singularities by Bertini's Theorem. In order to see that $\cO_S(H)$ is a positive multiple of $\cO_S(\lambda)$ it is enough to observe that $S$ is disjoint from $\Delta_0$. Indeed, this imply that the closure of $\Sat(S\setminus \Delta_1)$ in $\ASat$ is contained in $\cA_g$. Then, the claim follows since the Picard group of $\cA_g$ is spanned by the Hodge class $\lambda_{\cA_g}$ (whose pullback is $\lambda$, by definition).
\\
%%%%%%%%%%% da qui in avanti%%%%%%%%%%%%
(b) The argument above shows that  for $S$ general,  $S\cap \Delta_i=\emptyset$ for $i\neq1$.
As $\HSat$ is ample we have that the image $\Sat(S)$ needs to intersect $\Sat(\Delta_1)$ in a finite number of points 
$q_1,\dots,q_r.$  Hence, $S$ intersects $\Delta_1$ in a divisor, which we will denote by $E$, whose components are a finite number of disjoint smooth curves of genus $g-1$ (again one uses Bertini Theorem for the restriction $\Sat|_{\Delta_1}$). Then, by construction, the divisor $E$ is two divisible, since $\Delta_1$ is two divisible as observed in Remark \ref{REM:DIVISIBILITY}.

(c) As $S$ is a complete intersection in the nonsingular locus of $\MDM$ and since $\cO_S(H)=\cO_S(n\lambda)$ with suitable $n>0$ (as proved in $(a)$), by adjunction we have
$$\omega_S=\OO_S\left(13\lambda-\frac{3}{2}\Delta_1-2\sum_{i\neq 1}^{[g/2]}\Delta_i+m H\right)=\OO_S\left(k\lambda-\frac{3}{2}E\right)$$
where $m>0$ and $k>0$.

(d) 
Let $\nu:M'\to \MDM$ be a desingularization which induces an isomorphism $\nu^{-1}(\Mgo)\simeq \Mgo$. Then, by Corollary \ref{COR:COHRES}, we have that $H^1(\OO_{M'})=0$. Let $H'$ be the pullback of $H$ with respect to $f$. As $S$ is a complete intersection in $(\Mgo\cup \Delta_1)\setminus \Sing(\MDM)$ we can realize $S$ as a complete intersection in $M'$ by using multiples of the big line bundle $H'.$  
The statement follows by this application of the Kawamata-Viewheg vanishing Theorem (\cite{Kaw}).

\begin{center}
\textit{Let $Z$ be a smooth variety of dimension $\dim(Z)\geq 3$ with $h^1(\OO_Z)=0$ and let $H$ be a big and nef divisor on $Z$. Then, the general element $Y\in |H|$ is smooth and is such that $h^1(\OO_Y)=0$.}
\end{center}
% The sequences $0\to \OO_Z(-H)\to \OO_Z\to \OO_Y\to 0$ yields
% $$H^1(\OO_Z(-H))\to H^1(\OO_Z)\to H^1(\OO_Y)\to H^2(\OO_Z(-H))$$
% and $H^p(\OO_Z(-H))=0$ as soon as $p<\dim(Z)$   

Indeed, starting from $M'$ and cutting with multiples of $H'$ to obtain $S$, we have $$h^0(\Omega_S^1)=h^1(\OO_S)=\cdots=h^1(\OO_{M'\cap m_1H'})=0.$$  

(e) The last statement follows immediately by the construction.

(f) Since $p$ and $v$ are generic (in $\Mgo$ and $T_p\Mgo$, respectively), the general $H$-surface has the desired property.

\end{proof}

%%%%%%%%%%%%%%%%%%%%%%%%%%%%%%%%%%%%%%%%%%%%%%%%%%%%%%%%%
%%%%%%%%%%%%%%%%%%%%%%%%%%%%%%%%%%%%%%%%%%%%%%%%%%%%%%%%%

\subsection{Concluding the proof of Theorem \ref{THM:NOONEFORMS}}
\label{SUBS:ENDPROOF}

~\\ 
The following result is the main technical tool.

\begin{proposition}
\label{PROP:CANAPPLYMAINTHM}
The sheaf $\Omega_S^1$ is $\OO_S(H)$-liftable.
\end{proposition}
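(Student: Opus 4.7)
By Proposition~\ref{PROP:PROPOFS}, we have $H^0(\Omega_S^1)=0$, the divisor $E$ is exactly the contracted locus of $\varphi_{|aH|}|_S$, and its connected components $E_1,\dots,E_r$ are pairwise disjoint smooth curves of genus $g-1$. Theorem~\ref{thm:doubled} applied with $\cF=\Omega_S^1$ and $\cL=\OO_S(H)$ thus reduces the claim to showing
\[
h^0(\Omega_S^1(mE))=0 \qquad \text{for every } m\geq 0.
\]
The plan is to induct on $m$ using the restriction sequence
\[
0\to \Omega_S^1((m-1)E)\to \Omega_S^1(mE)\to \Omega_S^1(mE)|_E\to 0,
\]
controlling each $\Omega_S^1(mE)|_{E_i}$ via the twisted conormal sequence
\[
0\to \OO_{E_i}((m-1)E_i)\to \Omega_S^1(mE)|_{E_i}\to \omega_{E_i}(mE_i)\to 0.
\]
Since $E_i$ is contracted by the birational morphism $\Sat|_S$ one has $E_i^2<0$; using adjunction on $E_i\subset S$ together with $H\cdot E_i=0$ and the canonical-bundle formula in Proposition~\ref{PROP:PROPOFS}(c), one computes $E_i^2=8-4g$.

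For $m\geq 2$, both line bundles on the ends of the twisted conormal sequence have negative degree, namely $(m-1)E_i^2<0$ and $(2g-4)+mE_i^2=(2g-4)(1-2m)<0$. Hence $H^0(\Omega_S^1(mE)|_E)=0$ for $m\geq 2$, and the long cohomology sequence gives $H^0(\Omega_S^1(mE))\cong H^0(\Omega_S^1((m-1)E))$. The induction therefore reduces everything to the single vanishing $H^0(\Omega_S^1(E))=0$.

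The case $m=1$ is the main obstacle: now $\OO_{E_i}((m-1)E_i)=\OO_{E_i}$ has $h^0=1$, while $\deg\omega_{E_i}(E_i)=4-2g<0$, so $H^0(\Omega_S^1(E)|_{E_i})\cong H^0(\OO_{E_i})=\bC$ and $H^0(\Omega_S^1(E)|_E)\cong\bC^r$. Combined with $H^0(\Omega_S^1)=0$, the vanishing $H^0(\Omega_S^1(E))=0$ is equivalent to the injectivity of the connecting map $\delta\colon H^0(\Omega_S^1(E)|_E)\to H^1(\Omega_S^1)$. Comparing with the residue sequence $0\to\Omega_S^1\to\Omega_S^1(\log E)\to\OO_E\to 0$, which maps into the sequence above through the inclusions $\Omega_S^1(\log E)\hookrightarrow \Omega_S^1(E)$ and $\OO_E\hookrightarrow\Omega_S^1(E)|_E$ (the latter being precisely the inclusion produced by the twisted conormal sequence), functoriality of the connecting homomorphism identifies $\delta$ with the first-Chern-class map sending the generator of $H^0(\OO_{E_i})$ to $c_1(\OO_S(E_i))=[E_i]\in H^1(\Omega_S^1)$. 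These classes are linearly independent in $H^1(\Omega_S^1)$ because the intersection matrix $(E_i\cdot E_j)$ on their span is diagonal with nonzero entries (the $E_i$ are pairwise disjoint and $E_i^2\neq 0$). Hence $\delta$ is injective, $H^0(\Omega_S^1(E))=0$, and the induction concludes.
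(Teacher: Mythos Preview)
Your proof is correct and follows essentially the same route as the paper's own argument (stated there as Lemma~\ref{LEM:SURJSUP}): reduce via Theorem~\ref{thm:doubled} to $h^0(\Omega_S^1(mE))=0$, handle $m\ge 2$ by showing both ends of the twisted conormal sequence on $E_i$ have negative degree, and for $m=1$ compare with the residue sequence of $\Omega_S^1(\log E)$ to identify the connecting map with $(a_1,\dots,a_r)\mapsto\sum a_i[E_i]$, whose injectivity follows from the negative-definite diagonal intersection matrix. The only cosmetic difference is that you compute $E_i^2=8-4g$ directly from adjunction, whereas the paper phrases the same information as $\omega_E=\OO_E(-\tfrac{1}{2}E)$ (note the sign in Proposition~\ref{PROP:PROPOFS}(c) should read $-\tfrac{3}{2}E$, consistent with the formula for $K_{\MDM}$ and with the computation you actually carried out).
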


The proof of this proposition follows directly by applying Theorem \ref{thm:doubled}, Proposition \ref{PROP:PROPOFS}(\ref{PROP:PROPOFSe}) and the following Lemma.

\begin{lemma}
\label{LEM:SURJSUP}
With the above notations, $H^0(\Omega_S^1)\simeq H^0(\Omega_S^1(mE))=0$,  for all $m\geq 0$.
\end{lemma}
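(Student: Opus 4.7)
My plan is to proceed by induction on $m$, using for each $m \geq 1$ the short exact sequence
\begin{equation*}
0 \to \Omega_S^1((m-1)E) \to \Omega_S^1(mE) \to \Omega_S^1(mE)|_E \to 0,
\end{equation*}
with base case $H^0(\Omega_S^1)=0$ provided by Proposition \ref{PROP:PROPOFS}(d). Since the components $E_i$ of $E$ are pairwise disjoint smooth curves of genus $g-1$, each contracted by $\Sat|_S$ to a point, I would analyze $\Omega_S^1(mE)|_{E_i}$ componentwise via the twisted conormal sequence
\begin{equation*}
0 \to \OO_{E_i}\bigl((m-1)E_i|_{E_i}\bigr) \to \Omega_S^1(mE)|_{E_i} \to \omega_{E_i}\bigl(mE_i|_{E_i}\bigr) \to 0.
\end{equation*}

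A preliminary step is to compute $E_i^2$. Combining adjunction with the expression for $\omega_S$ from Proposition \ref{PROP:PROPOFS}(c) (using $H \cdot E_i = 0$ and the disjointness of the components, so that $K_S \cdot E_i$ becomes a rational multiple of $E_i^2$), one obtains $E_i^2 = -4(g-2)$, negative as expected from the contractibility in (e). Thus for $m \geq 2$ and $g \geq 5$, both line bundles appearing in the conormal sequence have strictly negative degree on $E_i$, forcing $H^0(\Omega_S^1(mE)|_E)=0$. Via the long exact sequence this yields $H^0(\Omega_S^1(mE)) \cong H^0(\Omega_S^1((m-1)E))$, and the induction reduces cleanly to the case $m=1$.

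This is the main obstacle. For $m=1$ the kernel of the conormal sequence becomes the trivial bundle $\OO_{E_i}$, which contributes a copy of $\bC$ to global sections on each component, while the quotient $\omega_{E_i}(E_i|_{E_i})$ still has negative degree $-2(g-2)$. Hence $H^0(\Omega_S^1(E)|_E) \cong \bC^r$, where $r$ is the number of components of $E$, and to deduce $H^0(\Omega_S^1(E))=0$ I would have to show that the connecting homomorphism $\partial\colon H^0(\Omega_S^1(E)|_E) \to H^1(\Omega_S^1)$ is injective. Using the standard comparison with the logarithmic residue sequence $0 \to \Omega_S^1 \to \Omega_S^1(\log E_i) \to \OO_{E_i} \to 0$, whose natural map to $\Omega_S^1(E_i)$ identifies the $\OO_{E_i}$ above with the class of $dz/z$ locally, $\partial$ sends the distinguished generator of the $i$-th summand to the first Chern class $c_1(\OO_S(E_i)) \in H^{1,1}(S) = H^1(\Omega_S^1)$.

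It then remains to verify that these Chern classes are linearly independent in $H^1(\Omega_S^1)$. Given a relation $\sum_i \lambda_i c_1(E_i) = 0$, cup-multiplying by $c_1(E_j)$ and evaluating on the fundamental class of $S$ gives $\sum_i \lambda_i (E_i \cdot E_j) = \lambda_j E_j^2 = 0$ thanks to disjointness, and the negativity $E_j^2 \neq 0$ (i.e., contractibility of $E_j$) forces $\lambda_j=0$. Therefore $\partial$ is injective, and combined with $H^0(\Omega_S^1)=0$ the long exact sequence yields $H^0(\Omega_S^1(E))=0$, closing the $m=1$ case and hence completing the induction.
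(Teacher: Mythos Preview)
Your proposal is correct and follows essentially the same route as the paper: the same induction via the conormal sequence twisted by $mE$, the same reduction of the $m=1$ case to the injectivity of the connecting map $\partial$, and the same identification of $\partial$ (through the logarithmic residue sequence) with the Chern-class map $(a_1,\dots,a_r)\mapsto \sum a_i c_1(E_i)$, whose injectivity is deduced from the diagonal intersection matrix with negative entries. The only cosmetic difference is that you compute $E_i^2=-4(g-2)$ explicitly, whereas the paper keeps the line-bundle expression $\omega_E=\OO_E(-\tfrac{1}{2}E)$ and argues degrees from there.
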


\begin{proof}
By Proposition \ref{PROP:PROPOFS}(\ref{PROP:PROPOFSd}) we have $h^0(\Omega_S^1)=0$. So we only have to show that $H^0(\Omega_S^1)\simeq H^0(\Omega_S^1(mE))$, for all $m$. We perform this in two steps:  we prove first that $H^0(\Omega_S^1)\simeq H^0(\Omega_S^1(E))$ and then that $H^0(\Omega_S^1(mE))\simeq H^0(\Omega_S^1((m+1)E))$ for $m\geq 1$.
\\
	
We start with the first claim. Consider the exact sequences
$$(I):\qquad 0\to \Omega_S^1\to \Omega_S^1(E)\to \Omega_S^1(E)|_E\to 0,$$
$$(II):\qquad 0\to \OO_E\to \Omega_S^1(E)|_E\to \omega_E(E)\to 0.$$
By Proposition \ref{PROP:PROPOFS}(\ref{PROP:PROPOFSd}) we have that $E=\sum_{i=1}^k E_i$ where $E_i$ are smooth disjoint curves of genus $g-1$. Since $\Sat|_S$ contracts $E_i$ we have $H|_S\cdot E_i=0$ for all $i$ and so $H|_S\cdot E=0$.
Then, by Proposition \ref{PROP:PROPOFS}(\ref{PROP:PROPOFSc}) and by adjunction we have
$$\omega_E=\omega_S\otimes \OO_E(E)=\OO_E\left(k\lambda-\frac{3}{2}E+E\right)=\OO_E\left(-\frac{1}{2}E\right).$$

Since $E_i$ is effective and $H|_S\cdot E_i=\lambda|_S\cdot E_i=0$, by the Hodge index theorem, we have $E_i^2<0$ and so $H^0\left(\cO_{E_i}\left(\frac{1}{2}E_i\right)\right)=0$. Being $E_i$ and $E_j$ disjoint we have $E_i\cdot E_j=0$ and $\cO_E=\bigoplus_i \cO_{E_i}$. In particular, $\omega_E(E)=\bigoplus_{i}\omega_{E_i}(E_i)=\bigoplus_{i}\cO_{E_i}\left(\frac{1}{2}E_i\right)$, so $H^0(\omega_E(E))=0$.

Using this and the exact sequence $(II)$ we have that
$$H^0(\OO_E)\xrightarrow{\alpha} H^0(\Omega_S^1(E)|_E)$$
is an isomorphism.  Consider the sequence
$$(III):\qquad 0\to \Omega_S^1\to \Omega_S^1(\log(E))\stackrel{res}\la \OO_E\to 0.$$
where $\Omega_S^1(\log (E))$ is the bundle of logarithmic differentials with poles along $E$. We have the following commutative diagram given by $(I),(II)$ and $(III)$.
$$
\xymatrix{
0 \ar[r] &
    H^0(\Omega_S^1) \ar@{^{(}->}[r]^{\iota}  \ar[d]_{=}&
    H^0(\Omega_S^1(\log E)) \ar[r] \ar@{->>}[d]& 
    H^0(\OO_E) \ar[r]^-{\partial} \ar@{->}[d]_{\alpha}^-{\simeq}&
    H^1(\Omega_S^1)\ar[d]_{=}\\
0 \ar[r] &
    H^0(\Omega_S^1) \ar@{^{(}->}[r] & 
    H^0(\Omega_S^1(E)) \ar[r] & 
    H^0(\Omega_S^1(E)|_E) \ar[r]^-{\partial'} &
    H^1(\Omega_S^1)
}
$$

Then we obtain $\partial'=\partial \circ \alpha.$ One has to show that $\partial' $ is injective or equivalently that $\partial$ is injective.
As observed above, we have $H^0(\OO_E)=\oplus_i^k H^0(\OO_{E_i})$
and we can write
$$\partial: \bigoplus_i^k H^0(\OO_{E_i})\to H^1(\Omega_S^1).$$ 
By (see \cite[Pag. 458-459]{GH})  this is just obtained by the Atiyah-Chern class (\cite{Ati}) via residues computation
$$(a_1,\dots,a_k)\mapsto \sum a_i c_1(E_i).$$  
The $E_i$ are effective and disjoint divisors with negative self-intersection, as previously observed. It follows that their first Chern classes are independent and hence that $\partial$ is injective.
In conclusion  $\iota:H^0(\Omega_S^1)\hookrightarrow H^0(\Omega_S^1(E))$
given by $(I)$ is an isomorphism and $0=h^0(\Omega_S^1)=h^0(\Omega_S^1(E))$.

We now prove the second part: for $m\geq 1$,   $H^0(\Omega_S^1(mE))\simeq H^0(\Omega_S^1((m+1)E))$. 
Consider the exact sequences
$$(I)'\qquad 0\to \Omega_S^1(mE)\to \Omega_S^1((m+1)E)\to \Omega_S^1((m+1)E)|_E\to 0$$
$$(II)'\qquad 0\to \OO_E(mE)\to \Omega_S^1((m+1)E)|_E\to \omega_E((m+1)E)\to 0$$
obtained from $(I)$ and $(II)$ respectively. We have seen that
$\omega_E=\OO_E\left(-\frac{1}{2}E\right)$, so both $\omega_E((m+1)E)=\OO_E\left(\left(m+\frac{1}{2}\right)E\right)$ and $\OO_E(mE)$ have negative degree for $m\geq 1.$ Then, $H^0(\Omega_S^1((m+1)E)|_E)=0$. This yields the desired result from the exact sequence $(I)'$ and induction.
\end{proof}

We can now conclude the proof of Theorem \ref{THM:NOONEFORMS}.

\begin{proof}[Proof of Theorem \ref{THM:NOONEFORMS}]
Let $\eta \in H^0(\Omega^1_{\Mgo})$. If $\eta\not\equiv 0$, then for a general point $p$ in $\Mgo$, $\eta_p\in \Omega^1_{\Mgo, p}$ is not identically zero. Let $v$ be a general element in $T_p\Mgo$. Hence we can assume $\eta_p(v)\neq 0$. By \ref{PROP:PROPOFS}(\ref{PROP:PROPOFSf}) we can find a general $H$-surface $S$ which passes through $p$ and such that $v\in T_pS$. Consider the open subset $U=S\setminus E$. Recall that $\cO_S(H)$ is big and semiample and $E\cdot H|_S=0$, so $U$ is an open neighborhood of a general curve in $|dH|_S$. By construction, the restrition $\eta_U$ of $\eta$ to $U$ defines a nontrivial element of $H^0(\Omega_S^1|_U)$.  But now, by Proposition \ref{PROP:CANAPPLYMAINTHM} we have that $\Omega_S^1$ is $\cO_S(H)$-liftable. Then, by Theorem \ref{THM:EXTENSIONFROMOPENS} we can conclude that $\Omega_S^1$ is $\cO_S(H)$-concave so $H^0(\Omega_S^1|_U)\simeq H^0(\Omega_S^1)\neq 0$. On the other hand, this yields a contradiction since, by Proposition \ref{PROP:PROPOFS}(\ref{PROP:PROPOFSd}), $H^0(\Omega_S^1|_U)=0$. \end{proof}

\begin{remark}
Notice that the assumption $g\geq 5$ is necessary in order to have $S$ smooth. Indeed, if $g=4$, the general $S$ meets the hyperelliptic locus (which has codimension $2$ in the moduli space) in a finite number of points so the general $S$ has a finite number of nodes as singularities. Nevertheless, the theorem should follow just by blowing up the points. When $g=2$, $\Mgo=\emptyset$ and when
$g=3$ one has to remove the hyperelliptic divisor  that is ample. The vanishing of all holomorphic forms on the open set of the smooth locus could still hold. Some analysis of the singularities and  of the fixed points for the action of the mapping class group is however necessary.
\end{remark}

We now gives a version of Theorem \ref{THM:BASICCOH} for certain open analytic subsets containing an $H$-curve, which are not necessarily open for the Zariski topology. In this sense this provides a strengthen of the theorem. 
\begin{theorem} \label{MgCON} 
Let $g\geq5$ and let $C$ be an $H$-curve in $\Mgo$. Let $U\subseteq \Mgo$ be a connected open neighborhood of $C$ for the classical topology. Then, we have $H^0(\Omega^1_U)=0$.
\end{theorem}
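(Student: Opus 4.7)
The plan is to show that any $\eta \in H^0(\Omega^1_U)$ vanishes on a nonempty open subset of $U$; since $U$ is a connected complex manifold, the identity principle for holomorphic forms then forces $\eta \equiv 0$ throughout $U$.

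The core ingredient is concavity on an $H$-surface through $C$. The $H$-surfaces containing $C$ form a $\mathbb{P}^{3g-5}$-family, parameterized by the codimension-$(3g-5)$ linear subspaces $L'$ of $\mathbb{P}^N$ containing the codimension-$(3g-4)$ subspace $L$ cutting out $C$; a general member $S$ of this subfamily still satisfies the conclusions of Proposition \ref{PROP:PROPOFS}: $S$ is smooth, lies in $(\Mgo \cup \Delta_1) \setminus \Sing(\MDM)$, and carries $C$ as a divisor in $|\OO_S(aH)|$ for some $a \geq 1$. By Proposition \ref{PROP:CANAPPLYMAINTHM} together with Theorem \ref{THM:EXTENSIONFROMOPENS}, the cotangent bundle $\Omega^1_S$ is $\OO_S(H)$-concave. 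Let $V$ be the connected component of $S \cap U$ containing $C$ (well-defined since $C$ is connected); then $V$ is a connected open neighborhood of the divisor $C$ in $S$, and concavity gives $H^0(V, \Omega^1_S|_V) \simeq H^0(S, \Omega^1_S) = 0$ by Proposition \ref{PROP:PROPOFS}(d). Viewing $\eta|_V$ as a section of $\Omega^1_S|_V$ via the inclusion $V \hookrightarrow U$ shows that $\eta_q$ annihilates $T_qS$ for every $q \in V$.

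To obtain pointwise vanishing of $\eta$, I vary $S$ and then deform $C$. For a smooth $p \in C$, the assignment $S \mapsto T_pS/T_pC$ defines a morphism from the $\mathbb{P}^{3g-5}$-family of $H$-surfaces through $C$ to $\mathbb{P}(N_{C/\MDM}|_p) \cong \mathbb{P}^{3g-5}$; it is dominant (essentially the identification $L'/L \leftrightarrow$ tangent direction in $\mathbb{P}^N/L$, pulled back by $d\varphi_{|aH|}$), so the $2$-planes $T_pS$ collectively span $T_p\Mgo$, giving $\eta_p = 0$. For a point $p$ close to $C$ but not on $C$, I replace $C$ by a nearby $H$-curve $C_p \ni p$: because $H|_C$ is ample on $C$ (as $\varphi_{|aH|}$ is an immersion on $\Mgo$) and $C$ is a complete intersection of divisors in $|a_iH|$, the normal bundle $N_{C/\MDM}$ is a direct sum of restrictions of positive multiples of $H$ to $C$ and is globally generated, so the evaluation map from the Hilbert scheme of $H$-curves to $\MDM$ is a submersion at smooth points of $C$; hence every $p$ in a sufficiently small analytic neighborhood of $C$ lies on some $H$-curve $C_p$ close to $C$. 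The locus $\{[C'] : C' \subset U\}$ is open in the Hilbert scheme (its complement is the image of the proper universal family restricted over $\MDM \setminus U$) and contains $[C]$, so $C_p \subset U$ for $C_p$ close enough to $C$. Applying the argument of the previous paragraph with $C_p$ in place of $C$ yields $\eta_p = 0$, so $\eta = 0$ on an open neighborhood $W$ of $C$ inside $U$, and the identity principle gives $\eta \equiv 0$ on $U$. The main technical obstacle is this deformation step; once one knows $N_{C/\MDM}$ is globally generated, the rest is a standard openness argument in the Hilbert scheme.
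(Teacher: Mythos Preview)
Your argument is correct and follows essentially the same route as the paper: both rely on the $\OO_S(H)$-concavity of $\Omega^1_S$ for general $H$-surfaces $S$ (Proposition \ref{PROP:CANAPPLYMAINTHM} plus Theorem \ref{THM:EXTENSIONFROMOPENS}), together with the fact that varying the linear section makes the tangent spaces of these $H$-varieties span $T_p\Mgo$. The only difference is organizational: the paper argues by contradiction---first perturbing $C$ to a nearby $H$-curve $C'\subset U$ on which $\eta$ restricts nontrivially, then invoking concavity on a single $H$-surface through $C'$---whereas you argue directly, using the full family of $H$-surfaces through $C$ to kill $\eta$ pointwise along $C$, then deforming $C$ (via global generation of $N_{C/\MDM}$) to propagate the vanishing to a neighborhood before applying the identity principle.
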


\begin{proof} Assume by contradiction that $H^0(\Omega^1_U)\neq 0$ and fix $\eta\in H^0(\Omega^1_U),$ $\eta\neq 0.$  Recall that $C$ is complete intersection of $3g-5$ general hypersurfaces in $|aH|$. More precisely we can find $L\in \Gr(|aH|,3g-5)$ general such that $\varphi_{|aH|}^{-1}(L)=C$.
Since being contained in $U$ gives an open condition in $\Gr(|aH|,3g-5)$ (in the analytical topology), by moving $L$ we can find $U'$ with $C\subset U'\subseteq U$ such that $U'$ is covered by $H-$ curves and the tangent vectors of those curves span the tangent space of $U'$ at the general point $p\in U'$. We can then find a general  
$H$-curve $C'\subset U'$ corresponding to $L'\in \Gr(|aH|,3g-5)$ such that $\eta_{C'}\neq 0$ where $\eta_{C'}\in H^0(\Omega^1_{C'})$ is the restriction of $\eta.$
Then, the general element of $\Gr(|aH|,3g-4)$ that contains $L'$ yields a smooth $H$-surface $S$ that contains $C'$. Let $U_S$ be the connected component of $U\cap S$ that contains $C'.$  One has that the restriction $\eta_{U_S}\in H^0(\Omega^1_{U_S})$ of $\eta$  is a fortiori non zero. But $\Omega^1_{S}$ is $H$-liftable and, by Theorem  \ref{THM:EXTENSIONFROMOPENS}, $H$-concave.
Since we have $C'\in |aH|$, by Proposition \ref{PROP:PROPOFS} we get $ 0=H^0(\Omega^1_S)\simeq H^0(\Omega^1_{U_S})$. This implies $\eta_{U_S}=0$, which gives a contradiction.
\end{proof}

%%%%%%%%%%%%%%%%%%%%%%%%%%%%%%%%%%%%%%%%%%%%%%%%%%%%%%%%%
%%%%%%%%%%%%%%%%%%%%%%%%%%%%%%%%%%%%%%%%%%%%%%%%%%%%%%%%%

\subsection{Holomorphic one-forms on moduli spaces of marked curves}
\label{SUBS:MARKED}

~\\ 
In this subsection we extend the result of Theorem \ref{THM:NOONEFORMS} to the moduli space of marked curves. We denote by $\cM_{g,n}$ the coarse moduli space of $n$-marked smooth projective curves of genus $g$. Denote by $\cM^o_{g,n}\subset \cM_{g,n}$ the smooth locus of $\cM_{g,n}$. We prove the following.

\begin{theorem}
\label{THM:NOONEFORMSMARKED}
Let $g\geq 5$. Then $\cM^o_{g,n}$ has no holomorphic $1$-forms for any $n\geq 0$, that is $H^0(\Omega_{\cM^o_{g,n}}^1)=0$, for any $n\geq 0$.
\end{theorem}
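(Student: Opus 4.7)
The proof proceeds by induction on $n$, with $n = 0$ being Theorem~\ref{THM:NOONEFORMS}. Fix $n \geq 1$, suppose inductively that $H^0(\Omega^1_{\cM^o_{g,n-1}}) = 0$, and take $\omega \in H^0(\Omega^1_{\cM^o_{g,n}})$. For each $i \in \{1, \ldots, n\}$ the forgetful map $\pi_i: \cM^o_{g,n} \to \cM^o_{g,n-1}$ that drops the $i$-th marked point (defined on the dense open subset where both sides lie in the respective smooth loci, whose complements have codimension $\geq 2$ for $g \geq 5$) produces the relative cotangent sequence
$$
0 \to \pi_i^* \Omega^1_{\cM^o_{g,n-1}} \to \Omega^1_{\cM^o_{g,n}} \to \omega_{\pi_i} \to 0,
$$
identifying the quotient with the $i$-th $\psi$-line bundle. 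Denote by $\eta_i \in H^0(\omega_{\pi_i})$ the image of $\omega$.

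The key step, where the infinitesimal variation of Hodge structures enters, is to prove that $\eta_i = 0$ for every $i$. Restricted to a general fiber of $\pi_i$, $\eta_i$ yields an abelian differential in $H^0(C, \omega_C)$, so $\eta_i$ corresponds to a section of the relative Hodge bundle $(\pi_i)_*\omega_{\pi_i}$; its tensor square $\eta_i^{\otimes 2}$ then lies in $(\pi_i)_*\omega_{\pi_i}^{\otimes 2}$. Now the Serre-dual Kodaira--Spencer identification
$$
\Omega^1_{\cM^o_{g,n-1}} \;\cong\; (\pi_i)_*\bigl(\omega_{\pi_i}^{\otimes 2}(\textstyle\sum_{j\neq i} s_j)\bigr),
$$
where the $s_j$ are the remaining marked sections of the universal family, combined with the natural inclusion $\omega_{\pi_i}^{\otimes 2} \hookrightarrow \omega_{\pi_i}^{\otimes 2}(\sum_{j\neq i} s_j)$, yields an injection $(\pi_i)_*\omega_{\pi_i}^{\otimes 2} \hookrightarrow \Omega^1_{\cM^o_{g,n-1}}$. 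Thus $\eta_i^{\otimes 2}$ defines a holomorphic 1-form on $\cM^o_{g,n-1}$, which vanishes by the induction hypothesis; and since any nonzero $\alpha \in H^0(C, \omega_C)$ has $\alpha^{\otimes 2}$ nonzero in $H^0(C, \omega_C^{\otimes 2})$, we conclude $\eta_i = 0$.

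Consequently $\omega$ lies in $\bigcap_{i=1}^n \pi_i^* \Omega^1_{\cM^o_{g,n-1}}$, and pointwise a cotangent vector belongs to every $\pi_i^*\Omega^1_{\cM^o_{g,n-1}}$ exactly when it vanishes on every vertical direction $T_{p_i} C$, i.e.\ on the entire relative tangent of the total forgetful $\pi: \cM^o_{g,n} \to \cM^o_g$; therefore this intersection coincides with $\pi^*\Omega^1_{\cM^o_g}$. To conclude $\omega = 0$, the vanishing of all $\psi$-components is used to extend $\omega$ as a holomorphic section of $\pi^*\Omega^1_{\cM^o_g}$ to the proper fiber product $\cC^{(n)} := \cM^o_{g,1} \times_{\cM^o_g} \cdots \times_{\cM^o_g} \cM^o_{g,1}$, whose fibers $C^n$ are compact and connected; properness then gives $(\cC^{(n)} \to \cM^o_g)_* \cO = \cO$, so $\omega$ descends to some $\tilde\omega \in H^0(\Omega^1_{\cM^o_g})$, which is zero by Theorem~\ref{THM:NOONEFORMS}. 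The main obstacle I foresee is this final descent step: for $n \geq 2$ the fibers of $\pi$ are the non-compact configuration spaces $\mathrm{Conf}_n(C) = C^n \setminus \Delta$, so $\pi_* \cO \neq \cO$ and one needs the $\psi$-vanishing to genuinely control the meromorphic extension of $\omega$ across the big diagonal $\Delta$ in $\cC^{(n)}$ before invoking compactness of $C^n$.
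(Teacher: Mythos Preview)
Your strategy differs from the paper's, and although both hinge on the same elementary fact---that the multiplication map $H^0(\omega_C)^{\otimes 2}\to H^0(\omega_C^{2})$ is nondegenerate along the diagonal---the way you package it creates a gap earlier than the one you flag.

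The paper uses only the \emph{single} forgetful map $f=f^n\colon\Ug{n}\to\Ug{n-1}$. Pushing forward the relative cotangent sequence gives
\[
0\longrightarrow \Omega^1_{\Ug{n-1}}\longrightarrow f_{*}\Omega^1_{\Ug{n}}\longrightarrow f_{*}\omega_f\xrightarrow{\ \partial\ } R^1f_{*}\OO_{\Ug{n}}\otimes\Omega^1_{\Ug{n-1}},
\]
and the whole inductive step reduces to proving $\ker\partial=0$. Fiberwise $\partial_x$ is identified with $H^0(\omega_C)\to H^1(\OO_C)\otimes H^0(\omega_C^{2})$, the dual of multiplication, which is injective precisely because $\alpha^2\neq 0$ for $\alpha\neq 0$. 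That immediately yields $H^0(\Omega^1_{\Ug{n}})\simeq H^0(\Omega^1_{\Ug{n-1}})=0$, with no intersection of kernels, no partial compactification, and no descent step.

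Your route runs into trouble already in the ``$\eta_i=0$'' step once $n\geq 2$. The fibers of $\pi_i\colon\cM^o_{g,n}\to\cM^o_{g,n-1}$ are the \emph{open} curves $C\setminus\{p_j:j\neq i\}$, so $(\pi_i)_{*}\omega_{\pi_i}^{\otimes 2}$ is not of finite rank and there is no injection into $\Omega^1_{\cM^o_{g,n-1}}$. The Kodaira--Spencer identification you quote involves the proper universal curve $\cC_{g,n-1}\to\cM^o_{g,n-1}$ and the divisors $s_j$ sitting inside it; those $s_j$ lie precisely in the complement of $\cM^o_{g,n}$, so the formula does not describe a pushforward along $\pi_i$. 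Concretely, $\eta_i$ restricted to a fiber is only a section of $\omega_C$ on $C\setminus\{p_j\}$, i.e.\ a meromorphic differential with uncontrolled poles at the $p_j$, so $\eta_i^{\otimes 2}$ need not land in $H^0(\omega_C^{2}(\sum_{j\neq i}p_j))$. This is exactly the extension-across-the-sections problem you isolate only for the final descent; it already obstructs the squaring argument, and until it is resolved you cannot invoke the inductive hypothesis to kill $\eta_i$.
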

\begin{proof}
Consider the morphism $f^n: \cM_{g,n} \to \cM_{g,n-1}$ that  forgets the last marked point, i.e. 
$$f^n:[C, p_1, \dots ,p_{n-1}, p_n]\mapsto [C, p_1, \dots ,p_{n-1}].$$  
Set $\Ug{0}=\cM^o_g$ and
$\Ug{n}=(f^{n})^{-1}(\Ug{n-1})$, for any $n\geq 1$. Note that $\Ug{n}\subset \cM^o_{g,n}$ is a Zariski open set parametrizing marked curves, whose underlying curve has trivial automorphism group. We now prove that $H^0(\Omega_{\Ug{n}}^1)=0$, for any $n\geq 0$. From this we conclude that $H^0(\Omega^1_{\cM^o_{g,n}})=0$. In fact, $\Ug{n}\subset \cM^o_{g,n}$ is an open subset, so any non-vanishing form on $\cM^o_{g,n}$ would restrict to the zero form on an open set $\Ug{n}$ which is not possible.
\smallskip

The proof is by induction on $n$. The case $n=0$ is the content of Theorem \ref{THM:NOONEFORMS}. We now assume that $H^0(\Omega^1_{\Ug{n-1}})=0$ and we prove that $H^0(\Omega^1_{\Ug{n}})=0$.

For simplicity we define $f$ to be the restriction of $f^n$ to $\Ug{n}$. Then, $f: \Ug{n}\to \Ug{n-1}$ is a family of smooth projective curves of genus $g$ over a smooth variety of dimension $3g-3+(n-1)$. Indeed, the fibre $f^{-1}([C, p_1, \dots ,p_{n-1}])$ is naturally isomorphic to the curve $C$ because $C$ has trivial automorphism group. Consider the short exact sequence of relative differentials
$$
\xymatrix{
0\ar[r] &
    f^\ast \Omega^1_{\Ug{n-1}} \ar[r]^-{df} &
    \Omega^1_{\Ug{n}} \ar[r] &
    \Omega^1_{\Ug{n}/\Ug{n-1}}\ar[r] &
    0.
}$$
Since $f$ is a fibration the push-forward and the projection formula give the exact sequence 	
$$
\xymatrix{
0\ar[r] &
    \Omega^1_{\Ug{n-1}} \ar[r]^-{df} &
    f_{\ast}\Omega^1_{\Ug{n}} \ar[r] &
    f_{\ast}\Omega^1_{\Ug{n}/\Ug{n-1}}\ar[r]^-{\partial} &
    R^1f_{\ast}\OO_{\Ug{n}}\otimes \Omega^1_{\Ug{n-1}}.
}$$
We now show that $\ker(\partial)=0$. Then, by induction we get $0=H^0(\Omega^1_{\Ug{n-1}})\simeq H^0(f_\ast\Omega^1_{\Ug{n}})\simeq H^0(\Omega^1_{\Ug{n}})$ for all $n\geq 1$, which ends the proof. 

Fix $x=[C,p_1,\dots,p_{n-1},p_n]\in \Ug{n}$. We can describe $\partial_x$ as the homomorphism
$$\partial_x: H^0(\omega_C)\to H^1(\OO_C)\otimes (\Omega_{\Ug{n-1}}^1)_x=\Hom (H^0(\omega_C),(\Omega_{\Ug{n-1}}^1)_x)$$
since $f_{\ast}\Omega^1_{\Ug{n}/\Ug{n-1}}$ is the Hodge bundle and $R^1f_{\ast}\OO_{\Ug{n}}$ is its dual. 
The forgetful map $F:\Ug{n}\to \Mgo$ induce the sequence
$$
\xymatrix{
0\ar[r] &
    F^\ast\Omega_{\Mgo}^1 \ar[r]^-{dF} &
    \Omega_{\Ug{n}}^1 \ar[r] &
    \Omega_{\Ug{n}/\Mgo}^1 \ar[r] &
    0.
}$$
In $x$, we have $(\Omega_{\Mgo}^1)_x=H^0(\omega_C^{2})$ (see \cite[Chapter XI]{ACG2}). Then we have an inclusion $\iota:H^0(\omega_C^2)\to (\Omega_{\Ug{n-1}}^1)_x$ given by $dF_x\circ (F^*)_x$. Note that, by construction, points in the fiber of $f$ have the same image via $F$ so the Hodge bundle is constant along these fibers. Consider the multiplication map $\mu: H^0(\omega_C)^{\otimes 2}\to H^0(\omega_C^2)$ and define $\psi:H^0(\omega_C)\to \Hom (H^0(\omega_C),H^0(\omega_C^2))$ as $\psi(\alpha)=\mu(\alpha\otimes -)$. Then, by construction, $\partial_x=\iota\circ \psi$ so we have a commutative diagram
$$
\xymatrix@C=1cm{
H^0(\omega_C)\ar[rrr]^-{\psi} \ar[d]_{=}& & &
    H^1(\OO_C)\otimes H^0(\omega_C^{2}) \ar@{}[r]|-{=} \ar[d]^{\id\otimes \iota} &\Hom (H^0(\omega_C),H^0(\omega_C^2)) \ar[d]^{\iota \circ -}\\
H^0(\omega_C)\ar[rrr]^-{\partial_x} & & &
    H^1(\OO_C)\otimes (\Omega_{\Ug{n-1}}^1)_x  \ar@{}[r]|-{=} & \Hom (H^0(\omega_C),(\Omega_{\Ug{n-1}}^1)_x).
}
$$
In particular, since $\psi$ and $\iota$ are injective by construction, we have that $\partial_x$ is injective.
% Since $\iota$ was injective and $\partial_x=(\id\otimes \iota)\circ\mu^\ast$, we obtain the injectivity of $\partial_x$. 
Then $\ker(\partial)=0$ as claimed.

\end{proof}

%%%%%%%%%%%%%%%%%%%%%%%%%%%%%%%%%%%%%%%%%%%%%%%%%%%%%%%%%%%%%%%%%%%%%%%%%%%%%
%%%%%%%%%%%%%%%%%%%%%%%%%%%%%%%%%%%%%%%%%%%%%%%%%%%%%%%%%%%%%%%%%%%%%%%%%%%%%
%%%%%%%%%%%%%%%%%%%%%%%%%%%%%%%%%%%%%%%%%%%%%%%%%%%%%%%%%%%%%%%%%%%%%%%%%%%%%

%%%%%%%%%%%%%%%%%%%%%%%%%%%%%%%%%%%%%%%%%%%%%%%%%%%%%%%%%%%%%%%%%%%%%%%%%%%%%
%%%%%%%%%%%%%%%%%%%%%%%%%%%%%%%%%%%%%%%%%%%%%%%%%%%%%%%%%%%%%%%%%%%%%%%%%%%%%
%%%%%%%%%%%%%%%%%%%%%%%%%%%%%%%%%%%%%%%%%%%%%%%%%%%%%%%%%%%%%%%%%%%%%%%%%%%%%

\begin{bibdiv}
\begin{biblist}

\bib{Andr}{article}{
    AUTHOR = {Andreotti, A.},
     TITLE = {Th\'{e}or\`emes de d\'{e}pendance alg\'{e}brique sur les espaces complexes pseudo-concaves},
   JOURNAL = {Bull. Soc. Math. France},
    VOLUME = {91},
      YEAR = {1963},
     PAGES = {1--38},
      ISSN = {0037-9484},
}

\bib{AG}{article}{
   author={Andreotti, A.},
   author={Grauert, H.},
   title={Th\'{e}or\`eme de finitude pour la cohomologie des espaces complexes},
   language={French},
   journal={Bull. Soc. Math. France},
   volume={90},
   date={1962},
   pages={193--259},
   issn={0037-9484},
   review={\MR{150342}},
}

\bib{ACG2}{book}{
   author={Arbarello, E.},
   author={Cornalba, M.},
   author={Griffiths, P. A.},
   title={Geometry of algebraic curves. Volume II},
   series={Grundlehren der Mathematischen Wissenschaften [Fundamental
   Principles of Mathematical Sciences]},
   volume={268},
   note={With a contribution by Joseph Daniel Harris},
   publisher={Springer, Heidelberg},
   date={2011},
   pages={xxx+963},
   isbn={978-3-540-42688-2},
   review={\MR{2807457}},
   doi={10.1007/978-3-540-69392-5},
}
\bib{Ati}{article}{
   author={Atiyah, M. F.},
   title={Complex analytic connections in fibre bundles},
   journal={Trans. Amer. Math. Soc.},
   volume={85},
   date={1957},
   pages={181--207},
   issn={0002-9947},
   review={\MR{86359}},
   doi={10.2307/1992969},
}
\bib{BB}{article}{
   author={Baily, W. L., Jr.},
   author={Borel, A.},
   title={Compactification of arithmetic quotients of bounded symmetric
   domains},
   journal={Ann. of Math. (2)},
   volume={84},
   date={1966},
   pages={442--528},
   issn={0003-486X},
   review={\MR{216035}},
   doi={10.2307/1970457},
}

\bib{BPHV}{book}{
   author={Barth, W. P.},
   author={Hulek, K.},
   author={Peters, C.A. M.},
   author={Van de Ven, Antonius},
   title={Compact complex surfaces},
   series={Ergebnisse der Mathematik und ihrer Grenzgebiete. 3. Folge. A
   Series of Modern Surveys in Mathematics [Results in Mathematics and
   Related Areas. 3rd Series. A Series of Modern Surveys in Mathematics]},
   volume={4},
   edition={2},
   publisher={Springer-Verlag, Berlin},
   date={2004},
   pages={xii+436},
   isbn={3-540-00832-2},
   review={\MR{2030225}},
   doi={10.1007/978-3-642-57739-0},
}

\bib{BCFP}{article}{
   author={Biswas, I. F.},
   author={Colombo, E.},
   author={Frediani, P.},
   author={Pirola, G. P.},
   title={A Hodge theoretic projective structure on Riemann surfaces},
   journal={Preprint available at arXiv:1912.08595v1}
}

\bib{FNP}{article}{
   author={Favale, F. F.},
   author={Naranjo, J. C.},
   author={Pirola, G. P.},
   title={On the Xiao conjecture for plane curves},
   journal={Geom. Dedicata},
   volume={195},
   date={2018},
   pages={193--201},
   issn={0046-5755},
   review={\MR{3820501}},
   doi={10.1007/s10711-017-0283-4},
}

\bib{FT}{article}{
   author={Favale, F. F.},
   author={Torelli, S.},
   title={Covering of elliptic curves and the kernel of the Prym map},
   journal={Matematiche (Catania)},
   volume={72},
   date={2017},
   number={2},
   pages={155--182},
   issn={0373-3505},
   review={\MR{3731509}},
   doi={10.4418/2017.72.2.12},
}

\bib{GST}{article}{
   author={Gonz\'{a}lez-Alonso, V.},
   author={Stoppino, L.},
   author={Torelli, S.},
   title={On the rank of the flat unitary summand of the Hodge bundle},
   journal={Trans. Amer. Math. Soc.},
   volume={372},
   date={2019},
   number={12},
   pages={8663--8677},
   issn={0002-9947},
   review={\MR{4029708}},
   doi={10.1090/tran/7868},
}

\bib{GT}{article}{
   author={Gonz\'{a}lez-Alonso, V.},
   author={Torelli, S.},
   title={Families of curves with Higgs field of arbitrarily large kernel},
   journal={Preprint available at arXiv:1812.05891v2}
}

% \bib{Gra}{article}{
%   author={Grauert, H.},
%   title={Theory of $q$-convexity and $q$-concavity},
%   conference={
%       title={Several complex variables, VII},
%   },   book={
%       series={Encyclopaedia Math. Sci.},
%       volume={74},
%       publisher={Springer, Berlin},
%   },
%   date={1994},
%   pages={259--284},
%   doi={10.1007/978-3-662-09873-$8_7$}
% }

\bib{GH}{book}{
   author={Griffiths, P.},
   author={Harris, J.},
   title={Principles of algebraic geometry},
   note={Pure and Applied Mathematics},
   publisher={Wiley-Interscience [John Wiley \& Sons], New York},
   date={1978},
   pages={xii+813},
   isbn={0-471-32792-1},
   review={\MR{507725}},
}

\bib{GRO}{article}{
   author={Grothendieck, A.},
   title={\'{E}l\'{e}ments de g\'{e}om\'{e}trie alg\'{e}brique. III. \'{E}tude cohomologique des
   faisceaux coh\'{e}rents. I},
   journal={Inst. Hautes \'{E}tudes Sci. Publ. Math.},
   number={11},
   date={1961},
   pages={167},
   issn={0073-8301},
   review={\MR{217085}},
}

\bib{HM}{book}{
   author={Harris, J.},
   author={Morrison, I.},
   title={Moduli of curves},
   series={Graduate Texts in Mathematics},
   volume={187},
   publisher={Springer-Verlag, New York},
   date={1998},
   pages={xiv+366},
   isbn={0-387-98438-0},
   isbn={0-387-98429-1},
   review={\MR{1631825}},
}

\bib{Har}{article}{
   author={Harer, J.},
   title={The second homology group of the mapping class group of an
   orientable surface},
   journal={Invent. Math.},
   volume={72},
   date={1983},
   number={2},
   pages={221--239},
   issn={0020-9910},
   review={\MR{700769}},
   doi={10.1007/BF01389321},
}

\bib{HAG}{book}{
   author={Hartshorne, R.},
   title={Algebraic geometry},
   note={Graduate Texts in Mathematics, No. 52},
   publisher={Springer-Verlag, New York-Heidelberg},
   date={1977},
   pages={xvi+496},
   isbn={0-387-90244-9},
   review={\MR{0463157}},
}

\bib{Igu}{article} {
    AUTHOR = {Igusa, Jun-ichi},
     TITLE = {A desingularization problem in the theory of {S}iegel modular
              functions},
   JOURNAL = {Math. Ann.},
    VOLUME = {168},
      YEAR = {1967},
     PAGES = {228--260},
      ISSN = {0025-5831},
       DOI = {10.1007/BF01361555},
}

\bib{Kaw}{article}{
   author={Kawamata, Y.},
   title={A generalization of Kodaira-Ramanujam's vanishing theorem},
   journal={Math. Ann.},
   volume={261},
   date={1982},
   number={1},
   pages={43--46},
   issn={0025-5831},
   review={\MR{675204}},
   doi={10.1007/BF01456407},
}

\bib{Laz}{book}{
   author={Lazarsfeld, R.},
   title={Positivity in algebraic geometry. I},
   series={Ergebnisse der Mathematik und ihrer Grenzgebiete. 3. Folge. A
   Series of Modern Surveys in Mathematics [Results in Mathematics and
   Related Areas. 3rd Series. A Series of Modern Surveys in Mathematics]},
   volume={48},
   note={Classical setting: line bundles and linear series},
   publisher={Springer-Verlag, Berlin},
   date={2004},
   pages={xviii+387},
   isbn={3-540-22533-1},
   doi={10.1007/978-3-642-18808-4},
}

\bib{Lop}{article}{
   author={Lopez, A. F.},
   title={Noether-Lefschetz theory and the Picard group of projective
   surfaces},
   journal={Mem. Amer. Math. Soc.},
   volume={89},
   date={1991},
   number={438},
   pages={x+100},
   issn={0065-9266},
   review={\MR{1043786}},
   doi={10.1090/memo/0438},
}

\bib{Mu}{article}{
   author={Mumford, D.},
   title={Abelian quotients of the Teichm\"{u}ller modular group},
   journal={J. Analyse Math.},
   volume={18},
   date={1967},
   pages={227--244},
   issn={0021-7670},
   review={\MR{219543}},
   doi={10.1007/BF02798046},
}
\bib{Ott}{article}{
   author={Ottem, J. C.},
   title={Ample subvarieties and $q$-ample divisors},
   journal={Adv. Math.},
   volume={229},
   date={2012},
   number={5},
   pages={2868--2887},
   issn={0001-8708},
   review={\MR{2889149}},
   doi={10.1016/j.aim.2012.02.001},
}

\bib{PT}{article}{
   author={Pirola, G. P.},
   author={Torelli, S.},
   title={Massey products and Fujita decompositions on fibrations of curves},
   journal={Collect. Math.},
   volume={71},
   date={2020},
   number={1},
   pages={39--61},
   issn={0010-0757},
   review={\MR{4047698}},
   doi={10.1007/s13348-019-00247-4},
}

\bib{SAT}{article}{
   author={Satake, I.},
   title={On the compactification of the Siegel space},
   journal={J. Indian Math. Soc. (N.S.)},
   volume={20},
   date={1956},
   pages={259--281},
   issn={0019-5839},
   review={\MR{84842}},
}
\bib{Som1}{article}{
   author={Sommese, A. J.},
   title={Concavity theorems},
   journal={Math. Ann.},
   volume={235},
   date={1978},
   number={1},
   pages={37--53},
   issn={0025-5831},
   review={\MR{486637}},
   doi={10.1007/BF01421592},
}

\bib{Som2}{article}{
   author={Sommese, A. J.},
   title={Submanifolds of Abelian varieties},
   journal={Math. Ann.},
   volume={233},
   date={1978},
   number={3},
   pages={229--256},
   issn={0025-5831},
   review={\MR{466647}},
   doi={10.1007/BF01405353},
}

\bib{Tot}{article}{
   author={Totaro, B.},
   title={Line bundles with partially vanishing cohomology},
   journal={J. Eur. Math. Soc. (JEMS)},
   volume={15},
   date={2013},
   number={3},
   pages={731--754},
   issn={1435-9855},
   review={\MR{3085089}},
   doi={10.4171/JEMS/374},
}

\bib{Voi2}{book}{
   author={Voisin, C.},
   title={Hodge theory and complex algebraic geometry. II},
   series={Cambridge Studies in Advanced Mathematics},
   volume={77},
   edition={Reprint of the 2003 English edition},
   note={Translated from the French by Leila Schneps},
   publisher={Cambridge University Press, Cambridge},
   date={2007},
   pages={x+351},
   isbn={978-0-521-71802-8},
   review={\MR{2449178}},
}

\end{biblist}
\end{bibdiv}

\end{document}